\numberwithin{equation}{section}
\numberwithin{figure}{section}
\theoremstyle{plain}
\newtheorem{thm}{\protect\theoremname}
  \theoremstyle{plain}
  \newtheorem{lem}[thm]{\protect\lemmaname}
  \theoremstyle{remark}
  \newtheorem{rem}[thm]{\protect\remarkname}
  \theoremstyle{definition}
  \newtheorem{defn}[thm]{\protect\definitionname}
  \theoremstyle{plain}
  \newtheorem{conjecture}[thm]{\protect\conjecturename}
  \theoremstyle{remark}
  \theoremstyle{definition}
  \newtheorem{example}[thm]{\protect\examplename}
  \theoremstyle{plain}
  \newtheorem{cor}[thm]{\protect\corollaryname}
  \theoremstyle{plain}
  \newtheorem{prop}[thm]{\protect\propositionname}
  \theoremstyle{definition}
  \providecommand{\claimname}{Claim}
  \providecommand{\conjecturename}{Conjecture}
  \providecommand{\corollaryname}{Corollary}
  \providecommand{\definitionname}{Definition}
  \providecommand{\examplename}{Example}
  \providecommand{\lemmaname}{Lemma}
  \providecommand{\problemname}{Problem}
  \providecommand{\propositionname}{Proposition}
  \providecommand{\remarkname}{Remark}
\providecommand{\theoremname}{Theorem}
\newcommand{\bT}{\mathbf{T}}
\newcommand{\bP}{\mathbf{P}}
\newcommand{\bV}{\mathbf{V}}
\newcommand{\bK}{\mathbf{K}}
\newcommand{\bJ}{\mathbf{J}}
\newcommand{\bX}{\mathbf{X}}
\newcommand{\bY}{\mathbf{Y}}
\newcommand{\bL}{\mathbf{L}}
\newcommand{\bI}{\mathbf{I}}
\newcommand{\bM}{\mathbf{M}}
\newcommand{\bN}{\mathbf{N}}
\newcommand{\bU}{\mathbf{U}}
\newcommand{\bQ}{\mathbf{Q}}
\newcommand{\bW}{\mathbf{\Lambda}}
\newcommand{\bbW}{\mathbf{W}}
\newcommand{\soc}{\operatorname{soc}}
\newcommand{\tens}{\operatorname{tens}}
\newcommand{\Hom}{\operatorname{Hom}}
\newcommand{\Ext}{\operatorname{Ext}}
\newcommand{\fg}{\mathfrak{g}}
\newcommand{\fl}{\mathfrak{l}}
\newcommand{\fa}{\mathfrak{a}}
\newcommand{\fk}{\mathfrak{k}}
\newcommand{\fh}{\mathfrak{h}}
\newcommand{\fs}{\mathfrak{s}}
\newcommand{\Res}{\operatorname{Res}}
\newcommand{\Ker}{\operatorname{ker}}
\begin{document}

\title{Integrable $\mathfrak{sl}(\infty)$-modules and Category $\mathcal O$ for $\mathfrak{gl}(m|n)$}

\author{Crystal Hoyt, Ivan Penkov, Vera Serganova}
\thanks{All three authors have been supported in part by DFG Grant PE 980/6-1. The first and third authors been partially supported by BSF Grant 2012227.  The third author has been also supported by NSF grant DMS-1701532}

\begin{abstract} We introduce and study new categories $\mathbb T_{\fg,\fk}$ of integrable $\fg=\mathfrak{sl}(\infty)$-modules which depend on the choice of a certain reductive in $\fg$ subalgebra $\fk\subset\fg$. The simple objects of $\mathbb T_{\fg,\fk}$ are tensor modules as in the previously studied category  $\mathbb T_{\fg}$ \cite{DPS}; however, the choice of $\fk$ provides for more flexibility of nonsimple modules in $\mathbb T_{\fg,\fk}$ compared to  $\mathbb T_{\fg}$. We then choose $\fk$ to have two infinite-dimensional diagonal blocks, and show that a certain injective object $\bK_{m|n}$ in $\mathbb T_{\fg,\fk}$ realizes a categorical $\mathfrak{sl}(\infty)$-action on the category $\mathcal{O}_{m|n}^{\mathbb{Z}}$, the integral category $\mathcal O$ of the Lie superalgebra $\mathfrak{gl}(m|n)$. We show that the socle of $\bK_{m|n}$ is generated by the projective modules in $\mathcal{O}_{m|n}^{\mathbb{Z}}$, and compute the socle filtration of $\bK_{m|n}$ explicitly. We conjecture that the socle filtration of $\bK_{m|n}$ reflects a ``degree of atypicality filtration'' on the category $\mathcal{O}_{m|n}^{\mathbb{Z}}$. We also conjecture that a natural tensor filtration on $\bK_{m|n}$ arises via the Duflo--Serganova functor sending the category $\mathcal{O}_{m|n}^{\mathbb{Z}}$ to $\mathcal{O}_{m-1|n-1}^{\mathbb{Z}}$. We prove a weaker version of this latter conjecture for the direct summand of  $\bK_{m|n}$ corresponding to finite-dimensional $\mathfrak{gl}(m|n)$-modules.\\

\noindent\textbf{Mathematics subject classification (2010):} Primary 17B65, 17B10, 17B55.\\
\noindent\textbf{Key words:} super category $\mathcal O$, integrable $\mathfrak{sl}(\infty)$-module, Duflo--Serganova functor, socle filtration, injective module.
\end{abstract}

\maketitle

\section{Introduction}

Categorification has set a trend in mathematics in the last two decades and has proved important and useful. The opposite process of studying a given category via a combinatorial  or algebraic object such as a single module has also borne ample fruit. An example is Brundan's idea from 2003 to study the category $\mathcal F_{m|n}^{\mathbb Z}$ of finite-dimensional integral modules over the Lie superalgebra $\mathfrak{gl}(m|n)$ via the weight structure of the $\mathfrak{sl}(\infty)$-module $\Lambda^{m}\bV\otimes\Lambda^{n}\bV_*$, where $\bV$ and $\bV_*$ are the two nonisomorphic defining (natural) representations of $\mathfrak{sl}(\infty)$. Using this approach  Brundan computes decomposition numbers in $\mathcal F_{m|n}^{\mathbb Z}$ \cite{B}.   An extension of Brundan's approach was proposed in the work
of Brundan, Losev and Webster in  \cite{BLW}, where a new proof of the Brundan--Kazhdan--Lusztig conjecture for the category $\mathcal O$ over the Lie
superalgebra $\mathfrak{gl}(m|n)$ is given. (The first proof of the Brundan--Kazhdan--Lusztig conjecture for the category $\mathcal O$ over the Lie
superalgebra $\mathfrak{gl}(m|n)$  was given by Cheng, Lam and Wang in \cite{CLW}.) The same approach was also used by  Brundan and Stroppel in \cite{BS4}, where the algebra of endomorphisms of a
projective generator in $\mathcal F_{m|n}^{\mathbb Z}$ is described as a certain diagram algebra and the Koszulity of $\mathcal F_{m|n}^{\mathbb Z}$ is established.

The representation theory of the Lie algebra $\mathfrak{sl}(\infty)$ is of independent interest and has been developing actively also for about two decades. In particular, several categories of $\mathfrak{sl}(\infty)$-modules have been singled out and studied in detail, see \cite{DP, PStyr, DPS, PS, Nam}.

The category $\mathbb T_{\mathfrak{sl}(\infty)}$ from \cite{DPS} has been playing a prominent role: its objects are finite-length submodules of a direct sum of several copies of the tensor algebra $T(\bV\oplus\bV_*)$. In \cite{DPS} it is proved that $\mathbb T_{\mathfrak{sl}(\infty)}$ is a self-dual Koszul category, in \cite{SS} it has been shown that  $\mathbb T_{\mathfrak{sl}(\infty)}$ has a universality property, and in \cite{FPS} $\mathbb T_{\mathfrak{sl}(\infty)}$ has been used to categorify the Boson-Fermion Correspondence.

Our goal in the present paper is to find an appropriate category of $\mathfrak{sl}(\infty)$-modules which contains modules relevant to the representation theory of the Lie superalgebras $\mathfrak{gl}(m|n)$. For this purpose, we introduce and study the categories $\mathbb{T}_{\fg,\fk}$, where $\fg=\mathfrak{sl}(\infty)$ and $\fk$ is a reductive subalgebra of $\fg$ containing the diagonal subalgebra and consisting of finitely many blocks along the diagonal.  The Lie algebra $\fk$ is infinite dimensional and is itself isomorphic to the  commutator subalgebra of a finite direct sum of copies of $\mathfrak{gl}(n)$ (for varying $n$) and copies of $\mathfrak{gl}(\infty)$.
When $\fk=\fg$, this new category coincides with  $\mathbb{T}_{\fg}$. A well-known property of the category $\mathbb{T}_{\fg}$ states that for every $\bM\in\mathbb{T}_{\fg}$, any vector $m\in\bM$ is annihilated by a ``large'' subalgebra $\fg'\subset\fg$, i.e. by an algebra which contains  the commutator subalgebra of the centralizer of a finite-dimensional subalgebra $\fs\subset\fg$. For a general $\fk$ as above, the category $\mathbb{T}_{\fg,\fk}$ has the same simple objects as  $\mathbb{T}_{\fg}$ but requires the following for a nonsimple module $\bM$: the annihilator in $\fk$ of every $m\in\bM$ is a large subalgebra of $\fk$. This makes the nonsimple objects of  $\mathbb{T}_{\fg,\fk}$ more ``flexible'' than in those of  $\mathbb{T}_{\fg}$, the degree of flexibility being governed by $\fk$.

In Section~\ref{sl inf}, we study the category $\mathbb{T}_{\fg,\fk}$ in detail, one of our main results being an explicit computation of the socle filtration of an indecomposable injective object  $\bI^{\boldsymbol{\lambda},{\boldsymbol{\mu}}}$ of $\mathbb{T}_{\fg,\fk}$ (where $\boldsymbol{\lambda}$ and $\boldsymbol{\mu}$ are two Young diagrams), see Theorem~\ref{thm:socinj}. An effect which can be observed here is that with a sufficient increase in the number of infinite blocks of $\fk$, the layers of the socle filtration of $\bI^{\boldsymbol{\lambda},{\boldsymbol{\mu}}}$  grow in a ``self-similar'' manner. This shows that $\mathbb{T}_{\fg,\fk}$ is an intricate extension of the category  $\mathbb{T}_{\fg}$ within the category of all integrable $\fg$-modules.

In Section~\ref{super}, we show that studying the category $\mathbb{T}_{\fg,\fk}$ achieves our goal of improving the understanding of the integral category $\mathcal{O}_{m|n}^{\mathbb{Z}}$ for the Lie superalgebra $\mathfrak{gl}(m|n)$. More precisely, we choose $\fk$ to have two blocks, both of them infinite. Then we show that the category $\mathcal{O}_{m|n}^{\mathbb{Z}}$ is a categorification of an injective object $\bK_{m|n}$ in the category  $\mathbb{T}_{\fg,\fk}$. In order to accomplish this, we exploit the properties of $\mathbb{T}_{\fg,\fk}$ as a category, and not just as a collection of modules.
The object $\bK_{m|n}$ of  $\mathbb{T}_{\fg,\fk}$ can be defined as the complexified reduced Grothendieck group of the category $\mathcal{O}_{m|n}^{\mathbb{Z}}$, endowed with an $\mathfrak{sl}(\infty)$-module structure (categorical action of  $\mathfrak{sl}(\infty)$). For $m,n\geq 1$, $\bK_{m|n}$  is an object of  $\mathbb{T}_{\fg,\fk}$, but not of $\mathbb{T}_{\fg}$.  We prove that the socle of $\bK_{m|n}$ as an  $\mathfrak{sl}(\infty)$-module is the submodule generated by classes of projective $\mathfrak{gl}(m|n)$-modules in  $\mathcal{O}_{m|n}^{\mathbb{Z}}$. Moreover, we conjecture that the socle filtration of $\bK_{m|n}$ (which we already know from Section~\ref{sl inf}) arises from filtering the category $\mathcal{O}_{m|n}^{\mathbb{Z}}$ according to the degree of atypicality of $\mathfrak{gl}(m|n)$-modules. We provide some partial evidence toward this conjecture.

We also show that the category $\mathcal{F}_{m|n}^{\mathbb{Z}}$ of finite-dimensional integral $\mathfrak{gl}(m|n)$-modules categorifies a direct summand $\bJ_{m|n}$ of $\bK_{m|n}$ which is nothing but an injective hull in $\mathbb{T}_{\fg,\fk}$ of Brundan's module  $\Lambda^{m}\bV\otimes\Lambda^{n}\bV_*$, see Corollary ~\ref{fin-dim}.  (Note that the module $\Lambda^{m}\bV\otimes\Lambda^{n}\bV_*$ is an injective object of $\mathbb{T}_{\fg}$, but is not injective in  $\mathbb{T}_{\fg,\fk}$ when $\fk$ has two (or more) infinite blocks.)

Finally, we conjecture that a natural filtration on the category $\mathcal{O}_{m|n}^{\mathbb{Z}}$ defined via the Duflo--Serganova functor $DS:\mathcal{O}_{m|n}^{\mathbb{Z}}\to\mathcal{O}_{m-1|n-1}^{\mathbb{Z}}$ categorifies the tensor filtration of $\bK_{m|n}$, i.e. the coarsest filtration of $\bK_{m|n}$ whose successive quotients are objects of $\mathbb T_{\fg}$. We have a similar conjecture for  the direct summand $\bJ_{m|n}$ of $\bK_{m|n}$, and we provide evidence for this conjecture in Proposition~\ref{thm:module structure}.


\section{Acknowledgements}

We would like to thank two referees for their extremely thorough and thoughtful comments.


\section{New categories of integrable $\mathfrak{sl}(\infty)$-modules}\label{sl inf}

\subsection{Preliminaries}\label{prelims}

Let $\bV$ and $\bV_{*}$ be countable-dimensional vector spaces with fixed bases  $\left\{ v_{i}\right\} _{i\in\mathbb{Z}}$ and  $\left\{ v_{j}^{*}\right\} _{j\in\mathbb{Z}}$, together with a nondegenerate
pairing $\langle\cdot,\cdot\rangle:\bV\otimes \bV_{*}\rightarrow\mathbb{C}$ defined by $\langle v_i,v_j^*\rangle=\delta_{ij}$. Then $\mathfrak{gl}\left(\infty\right):=\bV\otimes \bV_{*}$
has a Lie algebra structure such that
\[
[v_{i}\otimes v_{j}^*,v_{k}\otimes v_{l}^*]=\langle v_{k},v_{j}^*\rangle v_{i}\otimes v_{l}^*-\langle v_{i}, v_{l}^*\rangle v_{k}\otimes v_{j}^*.
\]
We can identify $\mathfrak{gl}(\infty)$ with the
space of infinite matrices $\left(a_{ij}\right)_{i,j\in\mathbb{Z}}$
with finitely many nonzero entries, where the vector $v_{i}\otimes v_{j}^{*}$
corresponds to the matrix $E_{ij}$ with $1$ in the $i,j$-position and zeros elsewhere.
Then $\langle\cdot,\cdot\rangle$ corresponds to the trace map, and its kernel is the Lie algebra  $\mathfrak{sl}\left(\infty\right)$, which is generated by $e_i: = E_{i, i+1}$, $f_i:=E_{i+1,i}$ with $i\in\mathbb{Z}$. One can also
realize $\mathfrak{sl}(\infty)$ as a direct limit of finite-dimensional Lie algebras $\mathfrak{sl}(\infty)=\underrightarrow{\lim}\ \mathfrak{sl}\left(n\right)$.
In contrast to the finite-dimensional setting, the exact sequence
\[
0\to\mathfrak{sl}(\infty)\to\mathfrak{gl}(\infty)\to\mathbb{C}\to0
\]
does not split, and the center of $\mathfrak{gl}(\infty)$ is trivial.

Let $\mathfrak{g}=\mathfrak{sl}(\infty)$.
The representations  $\bV$ and $\bV_{*}$ are the defining representations of $\mathfrak{g}$. The tensor representations $\bV^{\otimes p}\otimes \bV_{*}^{\otimes q}$, $p,q\in\mathbb{Z}_{\geq 0}$ have been studied in \cite{PStyr}. They are not semisimple when $p,q>0$; however, each simple subquotient of  $\bV^{\otimes p}\otimes \bV_{*}^{\otimes q}$ occurs as a submodule of $\bV^{\otimes p'}\otimes \bV_{*}^{\otimes q'}$ for some $p',q'$. The simple submodules of $\bV^{\otimes p}\otimes \bV_{*}^{\otimes q}$ can be parameterized by two Young diagrams $\boldsymbol{\lambda},\boldsymbol{\mu}$,   and we denote them $\bV^{\boldsymbol{\lambda},\boldsymbol{\mu}}$.

Recall that the {\em socle} of a module $\bM$, denoted $\soc\bM$,
is the largest semisimple submodule of $\bM$. The {\em socle filtration} of $\bM$ is defined inductively by $\soc^0 \bM:=\soc\bM$ and $\soc^i \bM:=p_i^{-1}(\soc (\bM/(\soc^{i-1}\bM)))$, where $p_i:\bM\to
\bM/(\soc^{i-1} \bM)$ is the natural projection.
 We also use the notation $\overline{\soc}^i \bM :=\soc^i \bM /\soc^{i-1} \bM $ for the layers of the socle filtration.

Schur-Weyl duality for $\mathfrak{sl}(\infty)$ implies that the module $\bV^{\otimes p}\otimes \bV_{*}^{\otimes q}$ decomposes as
\begin{equation}\label{summands}
\bV^{\otimes p}\otimes \bV_{*}^{\otimes q}=\bigoplus_{|\boldsymbol{\lambda}|=p,|{\boldsymbol{\mu}}|=q}
(\mathbb{S}_{\boldsymbol{\lambda}}(\bV)\otimes \mathbb{S}_{\boldsymbol{{\boldsymbol{\mu}}}}(\bV_*))\otimes (Y_{\boldsymbol{\lambda}}\otimes Y_{{\boldsymbol{\mu}}}),
\end{equation}
where $Y_{\boldsymbol{\lambda}}$ and $Y_{\boldsymbol{\mu}}$ are irreducible $S_p$- and $S_q$-modules, and $\mathbb{S}_{\boldsymbol{{\boldsymbol{\lambda}}}}$ denotes the Schur functor corresponding to the Young diagram (equivalently, partition) $\boldsymbol{\boldsymbol{\lambda}}$.
Each module  $\mathbb{S}_{\boldsymbol{\lambda}}(\bV)\otimes \mathbb{S}_{{\boldsymbol{\mu}}}(\bV_*)$ is indecomposable and its socle filtration is described in \cite{PStyr}. Moreover, Theorem 2.3 of \cite{PStyr} claims that
\begin{equation}\label{socle of S}
\overline{\soc}^k(\mathbb{S}_{\boldsymbol{\lambda}}(\bV)\otimes \mathbb{S}_{{\boldsymbol{\mu}}}(\bV_*))\cong\bigoplus_{\boldsymbol{\lambda'},\boldsymbol{\mu'}, |{\boldsymbol{\gamma}}|=k} N^{\boldsymbol{\lambda}}_{\boldsymbol{\lambda}',{\boldsymbol{\gamma}}}N^{\boldsymbol{\mu}}_{{\boldsymbol{\mu}}',{\boldsymbol{\gamma}}}\bV^{\boldsymbol{\boldsymbol{\lambda}}',{\boldsymbol{\mu}}'}
\end{equation}
where
$N^{\boldsymbol{\lambda}}_{\boldsymbol{\lambda}',{\boldsymbol{\gamma}}}$ are the standard Littlewood-Richardson coefficients. In particular,  $\mathbb{S}_{\boldsymbol{\lambda}}(\bV)\otimes \mathbb{S}_{{\boldsymbol{\mu}}}(\bV_*)$ has simple socle $\bV^{\boldsymbol{\lambda},\boldsymbol{\mu}}$.
It was also shown in  \cite[Theorem 2.2]{PStyr} that the socle of  $\bV^{\otimes p}\otimes \bV_{*}^{\otimes q}$ equals the intersection of the kernels of all contraction maps
\begin{eqnarray}\label{contractions}
&\Phi_{ij}:\bV^{\otimes p}\otimes \bV_{*}^{\otimes q}\rightarrow\bV^{\otimes (p-1)}\otimes \bV_{*}^{\otimes (q-1)} \\
&\nonumber v_1\otimes\cdots\otimes v_p\otimes v_1^*\otimes\cdots\otimes v_q^* \mapsto \langle v^*_j , v_i \rangle v_1\otimes\cdots\otimes \widehat{v_i} \otimes\cdots\otimes v_p\otimes v_1^*\otimes\cdots\otimes \widehat{v_j^*} \otimes\cdots\otimes v_q^*
\end{eqnarray}

A $\fg$-module is called a {\em tensor module} if it is isomorphic to a submodule of a finite direct sum of $\mathfrak{sl}(\infty)$-modules of the form $\bV^{\otimes p_i}\otimes \bV_{*}^{\otimes q_i}$ for $p_i,q_i \in\mathbb{Z}_{\geq 0}$.
The category of tensor modules  $\mathbb{T}_{\mathfrak{g}}$  is by definition the full subcategory of $\mathfrak{g}$-mod consisting of tensor modules \cite{DPS}.
A finite-length $\mathfrak{g}$-module $\bM$ lies in  $\mathbb{T}_{\mathfrak{g}}$ if and only if $\bM$ is integrable and satisfies the large annihilator condition \cite{DPS}. Recall that a $\mathfrak{g}$-module $\bM$ is called  {\em integrable} if  $\mathrm{dim}\{m,x\cdot m,x^2\cdot m,\ldots \}<\infty$ for any $x\in\mathfrak{g}$, $m\in \bM$. A $\mathfrak{g}$-module is said to satisfy the {\em large annihilator condition} if for each $m\in \bM$, the annihilator $\mathrm{Ann}_{\mathfrak{g}}m$ contains the commutator subalgebra of the centralizer of a finite-dimensional subalgebra of $\mathfrak{g}$.

The modules $\bV^{\otimes p}\otimes \bV_{*}^{\otimes q}$, $p,q\in\mathbb{Z}_{\geq 0}$ are injective in the  category $\mathbb{T}_{\mathfrak{g}}$. Moreover, every indecomposable injective object of $\mathbb{T}_{\mathfrak{g}}$ is isomorphic to an indecomposable direct summand of  $\bV^{\otimes p}\otimes \bV_{*}^{\otimes q}$  for some $p,q\in\mathbb{Z}_{\geq 0}$  \cite{DPS}. Consequently, by (\ref{summands}), an indecomposable injective in $\mathbb T_{\fg}$ is isomorphic to
 $\mathbb{S}_{\boldsymbol{\lambda}}(\bV)\otimes \mathbb{S}_{\boldsymbol{\mu}}(\bV_*)$ for some $\boldsymbol{\lambda},\boldsymbol{\mu}$.

The category  $\mathbb{T}_{\mathfrak{g}}$ is a subcategory of the category $\widetilde{Tens}_{\mathfrak{g}}$, which was introduced in \cite{PS} as the full subcategory of $\mathfrak{g}$-mod whose  objects $\bM$ are defined to be the integrable $\mathfrak{g}$-modules of finite Loewy length such that the algebraic dual $\bM^*=\mathrm{Hom}_{\mathbb C}(\bM,\mathbb C)$ is also integrable and of finite Loewy length.
The  categories  $\mathbb{T}_{\mathfrak{g}}$ and  $\widetilde{Tens}_{\mathfrak{g}}$ have the same simple objects $\bV^{\boldsymbol{\lambda},{\boldsymbol{\mu}}}$ \cite{PS,DPS}. The indecomposable  injective objects of  $\widetilde{Tens}_{\mathfrak{g}}$ are (up to isomorphism) the modules $(\bV^{\boldsymbol{\mu},\boldsymbol{\lambda}})^*$, and $\soc (\bV^{\boldsymbol{\mu},\boldsymbol{\lambda}})^*\cong\bV^{\boldsymbol{\lambda},{\boldsymbol{\mu}}}$ \cite{PS}. A recent result of \cite{CP} shows that the Grothendieck envelope  $\overline{Tens}_{\mathfrak{g}}$ of  $\widetilde{Tens}_{\mathfrak{g}}$ is an ordered tensor category, and that any injective object in $\overline{Tens}_{\mathfrak{g}}$  is a direct sum of indecomposable injectives from $\widetilde{Tens}_{\mathfrak{g}}$.


\subsection{The categories $\mathbb{T}_{\fg,\fk}$}

In this section, we introduce new categories of integrable $\mathfrak{sl}(\infty)$-modules. This is motivated in part by the applications to the representation theory of the Lie superalgebras $\mathfrak{gl}(m|n)$.

Let $\fg=\mathfrak{sl}(\infty)$ with the natural representation denoted $\bV$. Consider a decomposition
\begin{equation}\label{decomp V} \bV=\bV_1\oplus\cdots\oplus\bV_r,\end{equation}
for some vector subspaces $\bV_i$ of $\bV$.
Let $\fl$  be the Lie subalgebra of $\fg$ preserving this decomposition. Then $\fk:=[\fl,\fl]$ is isomorphic to
$\fk_1\oplus\cdots\oplus\fk_r$, where each $\fk_i$ is isomorphic to  $\mathfrak{sl}(n_i)$ or $\mathfrak{sl}(\infty)$.

\begin{defn}\label{def: cat T}
Denote by $\widetilde{\mathbb{T}}_{\fg,\fk}$ the full subcategory of $\widetilde{Tens}_{\mathfrak{g}}$ consisting of modules $\bM$ satisfying the large annihilator condition as a module over $\fk_i$ for all $i=1,\dots,r$. By $\mathbb{T}_{\fg,\fk}$ we denote the full subcategory of $\widetilde{\mathbb{T}}_{\fg,\fk}$ consisting of finite-length modules.
\end{defn}

Both categories $\mathbb{T}_{\fg,\fk}$ and  $\widetilde{\mathbb{T}}_{\fg,\fk}$ are abelian symmetric monoidal categories with respect to the usual tensor product of $\fg$-modules. Two categories  $\widetilde{\mathbb{T}}_{\fg,\fk}$ and  $\widetilde{\mathbb{T}}_{\fg,\bar{\fk}}$ are equal if  $\fk$ and $\bar{\fk}$ have finite corank in $\fk+\bar{\fk}$, so we will henceforth assume without loss of generality that each $\bV_i$ in decomposition (\ref{decomp V}) is infinite dimensional. Note that $\mathbb{T}_{\fg,\fg}=\mathbb{T}_{\fg}$.

We define the functor $\Gamma_{\fg,\fk}:\widetilde{Tens}_{\mathfrak{g}}\to \widetilde{\mathbb{T}}_{\fg,\fk}$ by taking the maximal submodule lying in
$\widetilde{\mathbb{T}}_{\fg,\fk}$. Then
\begin{equation}\label{Gamma functor}
\Gamma_{\fg,\fk}(\bM)=\bigcup \bM^{\fs_1\oplus\dots\oplus\fs_r},
\end{equation}
where the union is taken over all finite corank subalgebras $\fs_1\subset\fk_1,\dots,\fs_r\subset\fk_r$.

\begin{lem}\label{lem: R1} Let $\mathbb{T}_{\fg,\fk}$  be as in Definition~\ref{def: cat T}.
\begin{enumerate}
\item The simple objects of $\mathbb{T}_{\fg,\fk}$ and of $\widetilde{\mathbb{T}}_{\fg,\fk}$ are isomorphic to $\bV^{\boldsymbol{\lambda},{\boldsymbol{\mu}}}$.
\item The functor $\Gamma_{\fg,\fk}$  sends injective modules in $\widetilde{Tens}_{\mathfrak{g}}$ to injective modules in $ \widetilde{\mathbb{T}}_{\fg,\fk}$.
\item The category $\widetilde{\mathbb{T}}_{\fg,\fk}$ has  enough injective modules.
\item The indecomposable injective objects of $\widetilde{\mathbb{T}}_{\fg,\fk}$  are isomorphic to $\Gamma_{\fg,\fk}((\bV^{{\boldsymbol{\mu}},\boldsymbol{\lambda}} )^*)$.
\end{enumerate}
\end{lem}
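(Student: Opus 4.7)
\smallskip
\noindent\emph{Proof plan.} For (1), the simple objects of $\widetilde{Tens}_{\fg}$ are the modules $\bV^{\boldsymbol{\lambda},\boldsymbol{\mu}}$, so I would show that each such module lies automatically in $\widetilde{\mathbb T}_{\fg,\fk}$, i.e.\ satisfies the large annihilator condition over each $\fk_i$. Since $\bV^{\boldsymbol{\lambda},\boldsymbol{\mu}}$ is a submodule of $\bV^{\otimes p}\otimes \bV_*^{\otimes q}$, any vector $v$ already lies in $U^{\otimes p}\otimes U_*^{\otimes q}$ for finite-dimensional $U\subset\bV$ and $U_*\subset\bV_*$. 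Writing $W_i\subset\bV_i$ and $W_{*i}\subset\bV_{*i}$ for the projections of $U,U_*$ to the $i$-th blocks in the decomposition (\ref{decomp V}) and its dual, the subalgebra of $\fk_i$ annihilating $W_i$ and $W_{*i}$ kills $v$, and a small finite-dimensional adjustment exhibits it as the commutator subalgebra of the centralizer of a finite-dimensional subalgebra of $\fk_i$.

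For (2), the plan is to identify $\Gamma_{\fg,\fk}$ as right adjoint to the inclusion $\iota:\widetilde{\mathbb T}_{\fg,\fk}\hookrightarrow \widetilde{Tens}_{\fg}$. Indeed, for $\bN\in\widetilde{\mathbb T}_{\fg,\fk}$ and $\bM\in\widetilde{Tens}_{\fg}$, any $\fg$-map $\varphi:\bN\to\bM$ has image isomorphic to a quotient of $\bN$, which inherits the $\fk_i$-large annihilator condition from $\bN$; the image therefore lies in the maximal $\widetilde{\mathbb T}_{\fg,\fk}$-submodule $\Gamma_{\fg,\fk}(\bM)$, giving a natural bijection $\Hom_{\widetilde{Tens}_{\fg}}(\iota \bN,\bM)\cong \Hom_{\widetilde{\mathbb T}_{\fg,\fk}}(\bN,\Gamma_{\fg,\fk}(\bM))$. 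Since $\iota$ is visibly exact (kernels and cokernels in $\widetilde{Tens}_{\fg}$ of morphisms between objects of $\widetilde{\mathbb T}_{\fg,\fk}$ again satisfy the $\fk_i$-large annihilator condition), its right adjoint $\Gamma_{\fg,\fk}$ preserves injectivity.

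For (3), given $\bM\in\widetilde{\mathbb T}_{\fg,\fk}$, I would embed $\bM$ into an injective object $\bI\in \widetilde{Tens}_{\fg}$ (which exists by the cited results of \cite{PS}), observe that this embedding factors through $\Gamma_{\fg,\fk}(\bI)$ by definition of $\Gamma_{\fg,\fk}$, and invoke (2) to conclude that $\Gamma_{\fg,\fk}(\bI)$ is injective in $\widetilde{\mathbb T}_{\fg,\fk}$. For (4), the key observation is that the socle of $\Gamma_{\fg,\fk}((\bV^{\boldsymbol{\mu},\boldsymbol{\lambda}})^*)$ coincides with $\soc((\bV^{\boldsymbol{\mu},\boldsymbol{\lambda}})^*)\cong \bV^{\boldsymbol{\lambda},\boldsymbol{\mu}}$: by (1) the latter is already an object of $\widetilde{\mathbb T}_{\fg,\fk}$ and hence sits inside $\Gamma_{\fg,\fk}((\bV^{\boldsymbol{\mu},\boldsymbol{\lambda}})^*)$, while no larger semisimple submodule can appear since $\soc((\bV^{\boldsymbol{\mu},\boldsymbol{\lambda}})^*)$ is already simple. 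Thus $\Gamma_{\fg,\fk}((\bV^{\boldsymbol{\mu},\boldsymbol{\lambda}})^*)$ has simple socle, and being injective by (2) it is an indecomposable injective hull of $\bV^{\boldsymbol{\lambda},\boldsymbol{\mu}}$ in $\widetilde{\mathbb T}_{\fg,\fk}$; conversely, any indecomposable injective is the injective hull of its simple socle (part (1) classifies possible socles), so the list is exhaustive. The main technical point I anticipate is verifying carefully that every indecomposable injective has simple socle, which I would handle by decomposing the injective hull of the socle as a summand and using indecomposability, as in the analogous argument for $\widetilde{Tens}_{\fg}$ given in \cite{PS}.
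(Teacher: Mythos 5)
Your proposal is correct and follows essentially the same route as the paper's proof: both rest on the adjunction $\Hom_{\widetilde{\mathbb T}_{\fg,\fk}}(X,\Gamma_{\fg,\fk}(Y))\cong\Hom_{\widetilde{Tens}_{\fg}}(X,Y)$, the embedding into $\Gamma_{\fg,\fk}$ of an injective of $\widetilde{Tens}_{\fg}$ (the paper specifically uses $\bM^{**}$), and the observation that $\soc\Gamma_{\fg,\fk}((\bV^{\boldsymbol{\mu},\boldsymbol{\lambda}})^*)\cong\bV^{\boldsymbol{\lambda},\boldsymbol{\mu}}$ remains simple. The only cosmetic difference is in part (1), where you verify the large annihilator condition over each $\fk_i$ directly, whereas the paper deduces it from the fact that $\mathbb T_{\fg}$ is a full subcategory of $\widetilde{\mathbb T}_{\fg,\fk}$ sharing the same simples with $\widetilde{Tens}_{\fg}$.
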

\begin{proof}
\begin{enumerate}
\item The category  $\mathbb{T}_{\fg}$ is a full subcategory of  $\mathbb{T}_{\fg,\fk}$ and of  $\widetilde{\mathbb{T}}_{\fg,\fk}$, which are both full subcategories of  $\widetilde{Tens}_{\mathfrak{g}}$. Since the categories $\mathbb{T}_{\fg}$ and $\widetilde{Tens}_{\mathfrak{g}}$ have the same simple objects $\bV^{\boldsymbol{\lambda},{\boldsymbol{\mu}}}$, the claim follows.
\item This follows from the definition of $\Gamma_{\fg,\fk}$, since $\mathrm{Hom}_{\mathbb{T}_{\fg,\fk}}(X,\Gamma_{\fg,\fk}(Y))= \mathrm{Hom}_{\widetilde{Tens}_{\mathfrak{g}}}(X,Y)$ for all $X\in\mathbb{T}_{\fg,\fk}$ and $Y\in \widetilde{Tens}_{\mathfrak{g}}$.
\item Every module $\bM$ in  $\widetilde{\mathbb{T}}_{\fg,\fk}$ can be embedded into $\Gamma_{\fg,\fk}(\bM^{**})$, which is  injective in  $\widetilde{\mathbb{T}}_{\fg,\fk}$, since $\bM^{**}$ is injective in $\widetilde{Tens}_{\mathfrak{g}}$ \cite{PS}.
\item This follows from (1) and (2), since $(\bV^{{\boldsymbol{\mu}},\boldsymbol{\lambda}})^*$ is an indecomposable injective object of $\widetilde{Tens}_{\mathfrak{g}}$, and consequently $\Gamma_{\fg,\fk}((\bV^{{\boldsymbol{\mu}},\boldsymbol{\lambda}})^*)$ is an indecomposable injective object of $\widetilde{\mathbb{T}}_{\fg,\fk}$ with $\soc \Gamma_{\fg,\fk}((\bV^{{\boldsymbol{\mu}},\boldsymbol{\lambda}} )^*)\cong \bV^{{\boldsymbol{\lambda}},\boldsymbol{\mu}}$.
\end{enumerate}
\end{proof}

\begin{rem}
It will follow from Corollary~\ref{I finte length} that the indecomposable injective objects $\Gamma_{\fg,\fk}((\bV^{{\boldsymbol{\mu}},\boldsymbol{\lambda}})^*)$ are objects of $\mathbb{T}_{\fg,\fk}$. Consequently,
 $\mathbb{T}_{\fg,\fk}$ and  $\widetilde{\mathbb{T}}_{\fg,\fk}$ have the same indecomposable injectives.
\end{rem}


\subsection{The functor $\mathrm{R}$ and Jordan-H\"older multiplicities}

In this section, we calculate the Jordan-H\"older multiplicities of the indecomposable injective objects of the categories $\mathbb{T}_{\fg,\fk}$. One of the main tools we use for this computation is the functor $\mathrm{R}$, which we will now introduce.

Let
\begin{equation}\label{decomp Vr}
\bV'=\bV_1\oplus\cdots\oplus\bV_{r-1}, \quad \fg'=\fg\cap \mathfrak{gl}(\bV'),\quad\fk'=\fk_1\oplus\cdots\oplus\fk_{r-1}.
\end{equation}
 Let $(\bV_r)_*\subset \bV_*$ be the annihilator of $\bV'=\bV_1\oplus\cdots\oplus\bV_{r-1}$ with respect to the pairing $\langle\cdot,\cdot\rangle$.
 We have $\fg'\cong\mathfrak{sl}(\infty)$ and $\fk'\subset\fg'$.

Define a functor $\mathrm{R}$ from the category $\fg$--mod of all $\fg$-modules to the category $\fg'$--mod by setting
$$\mathrm{R}(\bM)=\bM^{\fk_r}.$$
It follows from the definition that after restricting to $ \widetilde{\mathbb T}_{\fg,\fk}$ we have a functor
$\mathrm{R}: \widetilde{\mathbb T}_{\fg,\fk}\to  \widetilde{\mathbb T}_{\fg',\fk'}$.

\begin{lem}\label{lem:commutative} The following diagram of functors is commutative:
  $$\begin{CD}
\mathfrak{g}\mathrm{-mod} @>\mathrm{R}>>{\mathfrak{g'}}\mathrm{-mod}\\
@V\Gamma_{\fg,\fk}VV @V\Gamma_{\fg',\fk'}VV \\
\widetilde{\mathbb{T}}_{\fg,\fk} @>\mathrm{R}>>\widetilde{ \mathbb{T}}_{\fg',\fk'}
\end{CD}.\notag$$
\end{lem}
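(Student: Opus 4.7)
The plan is to verify commutativity directly on objects by unpacking the explicit description of $\Gamma_{\fg,\fk}$ given in (\ref{Gamma functor}) and the definition $\mathrm{R}(\bM)=\bM^{\fk_r}$, then showing the two resulting unions of fixed-point subspaces coincide. Morphisms cause no trouble since all functors involved are defined on the underlying $\fg$-module by passing to submodules, so naturality follows automatically once the object-level identity is established.

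Starting from the upper-right corner, for any $\fg$-module $\bM$ one has
\[
\Gamma_{\fg',\fk'}(\mathrm{R}(\bM))=\bigcup_{\fs_1,\dots,\fs_{r-1}}\bigl(\bM^{\fk_r}\bigr)^{\fs_1\oplus\cdots\oplus\fs_{r-1}}=\bigcup_{\fs_1,\dots,\fs_{r-1}}\bM^{\fs_1\oplus\cdots\oplus\fs_{r-1}\oplus\fk_r},
\]
the union being taken over all finite corank subalgebras $\fs_i\subset\fk_i$ for $i=1,\dots,r-1$. On the other hand,
\[
\mathrm{R}(\Gamma_{\fg,\fk}(\bM))=\Bigl(\bigcup_{\fs_1,\dots,\fs_r}\bM^{\fs_1\oplus\cdots\oplus\fs_r}\Bigr)^{\fk_r}.
\]
I would then verify the two inclusions between these sets. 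For $\subseteq$, since $\fk_r$ has finite corank in itself, each summand $\bM^{\fs_1\oplus\cdots\oplus\fs_{r-1}\oplus\fk_r}$ appears in the union defining $\Gamma_{\fg,\fk}(\bM)$ (take $\fs_r=\fk_r$), and its elements are obviously $\fk_r$-invariant. For $\supseteq$, if $m$ lies in the right-hand side, there exist finite corank $\fs_1,\dots,\fs_r$ with $(\fs_1\oplus\cdots\oplus\fs_r)\cdot m=0$ and also $\fk_r\cdot m=0$; enlarging $\fs_r$ to $\fs_r+\fk_r=\fk_r$ shows that $m\in\bM^{\fs_1\oplus\cdots\oplus\fs_{r-1}\oplus\fk_r}$, so $m$ lies in the left-hand side.

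There is no real obstacle here beyond bookkeeping; the only conceptual point is the observation that enlarging a finite corank subalgebra $\fs_r\subset\fk_r$ to all of $\fk_r$ is legitimate on $\fk_r$-invariants, which lets one exchange the order of "$\fk_r$-invariants" and "union over finite corank $\fs_i\subset\fk_i$, $i<r$." Once this is noted, both compositions visibly compute the same submodule of $\bM$, and the diagram commutes.
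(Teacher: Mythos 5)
Your proof is correct and follows essentially the same route as the paper: both unwind $\Gamma_{\fg,\fk}$ via the union formula, identify the interchange of taking $\fk_r$-invariants with the union over finite-corank $\fs_1,\dots,\fs_{r-1}$, and conclude. Your version simply spells out the two set inclusions that the paper leaves as a one-line chain of equalities.
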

\begin{proof} By (\ref{Gamma functor}) we have
  $$\Gamma_{\fg,\fk}(\bM)=\bigcup \bM^{\fs_1\oplus\dots\oplus\fs_r}$$
 for any $\fg$-module $\bM$. Then
$$\mathrm{R}(\Gamma_{\fg,\fk}(\bM))=(\bigcup \bM^{\fs_1\oplus\dots\oplus\fs_r})^{\fk_r}=\bigcup \bM^{\fs_1\oplus\dots\oplus\fs_{r-1}\oplus\fk_r}=\bigcup (\mathrm{R}(\bM))^{\fs_1\oplus\dots\oplus\fs_{r-1}}=
\Gamma_{\fg',\fk'}(\mathrm{R}(\bM)).$$
  \end{proof}

  \begin{lem}\label{lem:identity} If $\boldsymbol{\lambda},\boldsymbol{\mu}$ are Young diagrams, then
    $$\mathrm{R}((\mathbb{S}_{\boldsymbol{\lambda}}(\bV)\otimes \mathbb{S}_{{\boldsymbol{\mu}}}(\bV_*))^*)=\bigoplus_{\boldsymbol{\lambda'},\boldsymbol{\mu'},{\boldsymbol{\gamma}}} N^{\boldsymbol{\lambda}}_{\boldsymbol{\lambda}',{\boldsymbol{\gamma}}}N^{{\boldsymbol{\mu}}}_{{\boldsymbol{\mu}}',{\boldsymbol{\gamma}}}(\mathbb{S}_{\boldsymbol{\lambda}'}(\mathrm{R}(\bV))\otimes \mathbb{S}_{{\boldsymbol{\mu}}'}(\mathrm{R}(\bV_*)))^*.$$
  \end{lem}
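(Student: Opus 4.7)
The plan is to decompose $\mathbb{S}_{\boldsymbol{\lambda}}(\bV)\otimes \mathbb{S}_{\boldsymbol{\mu}}(\bV_*)$ as a $\fk_r$-module via the splittings $\bV=\bV'\oplus \bV_r$ and $\bV_*=\bV'_*\oplus (\bV_r)_*$, where $\bV'_*$ is the annihilator of $\bV_r$. Applying Schur functors to these splittings and invoking the Littlewood--Richardson rule yields the finite direct sum decomposition
\begin{equation*}
\mathbb{S}_{\boldsymbol{\lambda}}(\bV)\otimes \mathbb{S}_{\boldsymbol{\mu}}(\bV_*)\;=\;\bigoplus_{\boldsymbol{\lambda}',\boldsymbol{\lambda}'',\boldsymbol{\mu}',\boldsymbol{\mu}''}N^{\boldsymbol{\lambda}}_{\boldsymbol{\lambda}',\boldsymbol{\lambda}''}\,N^{\boldsymbol{\mu}}_{\boldsymbol{\mu}',\boldsymbol{\mu}''}\;A_{\boldsymbol{\lambda}',\boldsymbol{\mu}'}\otimes B_{\boldsymbol{\lambda}'',\boldsymbol{\mu}''},
\end{equation*}
where $A_{\boldsymbol{\lambda}',\boldsymbol{\mu}'}:=\mathbb{S}_{\boldsymbol{\lambda}'}(\bV')\otimes \mathbb{S}_{\boldsymbol{\mu}'}(\bV'_*)$ is a trivial $\fk_r$-module and $B_{\boldsymbol{\lambda}'',\boldsymbol{\mu}''}:=\mathbb{S}_{\boldsymbol{\lambda}''}(\bV_r)\otimes \mathbb{S}_{\boldsymbol{\mu}''}((\bV_r)_*)$ is a tensor module for $\fk_r\cong \mathfrak{sl}(\infty)$. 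Because only finitely many terms are nonzero, algebraic dualization commutes with this sum, reducing the problem to the computation of $\bigl((A\otimes B)^*\bigr)^{\fk_r}$ on each summand.

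The triviality of the $\fk_r$-action on $A$ gives
\begin{equation*}
\bigl((A\otimes B)^*\bigr)^{\fk_r}\;=\;\Hom_{\fk_r}(A\otimes B,\mathbb{C})\;=\;\Hom\!\bigl(A,(B^*)^{\fk_r}\bigr),
\end{equation*}
so the core task is to show that $(B_{\boldsymbol{\lambda}'',\boldsymbol{\mu}''}^*)^{\fk_r}$ is one-dimensional when $\boldsymbol{\lambda}''=\boldsymbol{\mu}''$ and zero otherwise. I would apply the socle filtration formula (\ref{socle of S}) to $B_{\boldsymbol{\lambda}'',\boldsymbol{\mu}''}$ as a $\fk_r\cong\mathfrak{sl}(\infty)$-module: the trivial constituent $\mathbb{C}=\bV_r^{\emptyset,\emptyset}$ appears among composition factors with total multiplicity $\sum_{\boldsymbol{\gamma}}N^{\boldsymbol{\lambda}''}_{\emptyset,\boldsymbol{\gamma}}N^{\boldsymbol{\mu}''}_{\emptyset,\boldsymbol{\gamma}}=\delta_{\boldsymbol{\lambda}'',\boldsymbol{\mu}''}$, supplying the upper bound $\dim\Hom_{\fk_r}(B,\mathbb{C})\le\delta_{\boldsymbol{\lambda}'',\boldsymbol{\mu}''}$. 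When $\boldsymbol{\lambda}''=\boldsymbol{\mu}''$, the same formula places this unique copy in the layer $\overline{\soc}^{k}$ with $k=|\boldsymbol{\lambda}''|$, which is the topmost nonzero layer of the socle filtration, so $\mathbb{C}$ is a quotient of $B_{\boldsymbol{\lambda}'',\boldsymbol{\mu}''}$ and the bound is attained.

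Given this computation, $(B^*)^{\fk_r}$ is at most one-dimensional, so $\Hom(A,(B^*)^{\fk_r})$ equals $A^*$ when $\boldsymbol{\lambda}''=\boldsymbol{\mu}''$ and $0$ otherwise. Summing the surviving summands with $\boldsymbol{\lambda}''=\boldsymbol{\mu}''=:\boldsymbol{\gamma}$ and noting $\mathrm{R}(\bV)=\bV'$, $\mathrm{R}(\bV_*)=\bV'_*$ produces exactly the stated identity. I expect the one delicate point to be the promotion of composition-factor multiplicity to a genuine surjection onto $\mathbb{C}$; this is addressed by reading off from (\ref{socle of S}) that the trivial factor sits in the terminal (cosocle) layer of the socle filtration rather than somewhere strictly below it.
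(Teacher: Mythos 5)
Your proof is correct and takes essentially the same route as the paper: both use the Schur functor identity (\ref{Schur}) to decompose $\mathbb{S}_{\boldsymbol{\lambda}}(\bV)\otimes \mathbb{S}_{\boldsymbol{\mu}}(\bV_*)$ across the splittings $\bV=\bV'\oplus\bV_r$, $\bV_*=\bV'_*\oplus(\bV_r)_*$, then compute $\operatorname{Hom}_{\fk_r}(-,\mathbb{C})$ using (\ref{socle of S}) to show that $\dim\operatorname{Hom}_{\fk_r}(\mathbb{S}_{\boldsymbol{\gamma}}(\bV_r)\otimes\mathbb{S}_{\boldsymbol{\gamma}'}((\bV_r)_*),\mathbb{C})=\delta_{\boldsymbol{\gamma},\boldsymbol{\gamma}'}$. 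You are slightly more explicit than the paper at the one delicate step, namely explaining why the trivial composition factor (which a priori only bounds the $\operatorname{Hom}$ space) actually sits in the top layer of the socle filtration and hence realizes the bound.
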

  \begin{proof} Since $\mathrm{R}(\bV)=\bV'$, we have the decompositions
    $$\bV=\mathrm{R}(\bV)\oplus\bV_r,\quad\bV_*=\mathrm{R}(\bV_*)\oplus(\bV_r)_*.$$
    We also have the identity
    \begin{equation}\label{Schur}
      \mathbb{S}_{\boldsymbol{\lambda}}(V\oplus W)=\bigoplus N^{\boldsymbol{\lambda}}_{{\boldsymbol{\mu}},{\boldsymbol{\nu}}}\mathbb{S}_{\boldsymbol{\mu}}(V)\otimes \mathbb{S}_{{\boldsymbol{\nu}}}(W), \end{equation}
    which holds for all vector spaces $V$ and $W$.
     These imply
    $$\mathbb{S}_{\boldsymbol{\lambda}}(\bV)\otimes \mathbb{S}_{{\boldsymbol{\mu}}}(\bV_*)=\bigoplus_{\boldsymbol{\lambda'},\boldsymbol{\mu'},{\boldsymbol{\gamma}},{\boldsymbol{\gamma}}'} N^{\boldsymbol{\lambda}}_{\boldsymbol{\lambda}',{\boldsymbol{\gamma}}}N^{{\boldsymbol{\mu}}}_{{\boldsymbol{\mu}}',{\boldsymbol{\gamma}}'}\mathbb{S}_{\boldsymbol{\lambda}'}(\mathrm{R}(\bV))\otimes
    \mathbb{S}_{{\boldsymbol{\gamma}}}(\bV_r)\otimes \mathbb{S}_{{\boldsymbol{\mu}}'}(\mathrm{R}(\bV_*))\otimes \mathbb{S}_{{\boldsymbol{\gamma}}'}((\bV_r)_*).$$

 By definition
    $$\mathrm{R}((\mathbb{S}_{\boldsymbol{\lambda}}(\bV)\otimes \mathbb{S}_{{\boldsymbol{\mu}}}(\bV_*))^*)=\operatorname{Hom}_{\fg'}(\mathbb{S}_{\boldsymbol{\lambda}}(\bV)\otimes \mathbb{S}_{{\boldsymbol{\mu}}}(\bV_*),\mathbb C),$$
  and it follows from (\ref{socle of S}) that
    $$\dim\Hom_{\fg'}(\mathbb{S}_{{\boldsymbol{\gamma}}}(\bV_r)\otimes  \mathbb{S}_{{\boldsymbol{\gamma}}'}((\bV_r)_*),\mathbb C)=\delta_{{\boldsymbol{\gamma}},{\boldsymbol{\gamma}}'},$$
$\delta_{{\boldsymbol{\gamma}},{\boldsymbol{\gamma}}'}$ being Kronecker's delta.
 Therefore,
    $$\operatorname{Hom}_{\fg'}(\mathbb{S}_{\boldsymbol{\lambda}}(\bV)\otimes \mathbb{S}_{{\boldsymbol{\mu}}}(\bV_*),\mathbb C)=\bigoplus_{\boldsymbol{\lambda'},\boldsymbol{\mu'},{\boldsymbol{\gamma}}} N^{\boldsymbol{\lambda}}_{\boldsymbol{\lambda}',{\boldsymbol{\gamma}}}N^{{\boldsymbol{\mu}}}_{{\boldsymbol{\mu}}',{\boldsymbol{\gamma}}}(\mathbb{S}_{\boldsymbol{\lambda}'}(\mathrm{R}(\bV))\otimes \mathbb{S}_{{\boldsymbol{\mu}}'}(\mathrm{R}(\bV_*)))^*.$$
    \end{proof}

    \begin{lem}\label{lem:dual} If $0\to A\to B\to C\to 0$ is an exact sequence of modules in $\widetilde{Tens}_{\fg}$, then the dual exact sequence $0\to C^*\to B^*\to A^*\to 0$ splits.
    \end{lem}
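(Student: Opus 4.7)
The plan is to exhibit an explicit section $s\colon A^{*}\to B^{*}$ of the canonical surjection $i^{*}\colon B^{*}\to A^{*}$ induced by the inclusion $i\colon A\hookrightarrow B$. The key input is the fact established in \cite{PS} (and already used in the proof of Lemma~\ref{lem: R1}(3)) that the double dual $A^{**}$ is an injective object of $\widetilde{Tens}_{\fg}$; from that, splitting of the dualized sequence is essentially automatic, with no need to first verify that $C^{*}$ or $B^{*}$ is individually injective.

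Applying the injectivity of $A^{**}$ to the canonical biduality embedding $\iota_{A}\colon A\to A^{**}$ and to the inclusion $i$, I first obtain a morphism $g\colon B\to A^{**}$ satisfying $g\circ i=\iota_{A}$. Dualizing and pre-composing with the canonical embedding $\iota_{A^{*}}\colon A^{*}\to A^{***}$, I then set
$$s:=g^{*}\circ\iota_{A^{*}}\colon A^{*}\to B^{*}.$$

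The claim that $s$ is a section follows from the standard triangle identity $\iota_{A}^{*}\circ\iota_{A^{*}}=\mathrm{id}_{A^{*}}$ of the biduality adjunction, via
$$i^{*}\circ s=i^{*}\circ g^{*}\circ\iota_{A^{*}}=(g\circ i)^{*}\circ\iota_{A^{*}}=\iota_{A}^{*}\circ\iota_{A^{*}}=\mathrm{id}_{A^{*}}.$$
The main obstacle is purely conceptual, namely recognizing that the correct tool is the injectivity of $A^{**}$ rather than attempting to establish injectivity of $C^{*}$ directly. Once this is noticed, everything else is a formal diagram chase that requires no further input beyond the biduality identity and the extension property just quoted.
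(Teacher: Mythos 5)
Your proof is correct, and it takes a genuinely different route from the paper, although both rest on the same foundational input from \cite{PS}. The paper's proof is a one-liner: it invokes the fact that $C^{*}$ is injective in $\widetilde{Tens}_{\fg}$, so the monomorphism $C^{*}\hookrightarrow B^{*}$ admits a retraction, and the sequence splits from the left end. You instead invoke the injectivity of $A^{**}$ (the same structural result from \cite{PS}, applied to $A^{*}$ rather than to $C$), lift $\iota_{A}$ along $i$ to get $g\colon B\to A^{**}$, and then dualize and precompose with $\iota_{A^{*}}$ to produce an explicit section $s\colon A^{*}\to B^{*}$ of $i^{*}$; the triangle identity $\iota_{A}^{*}\circ\iota_{A^{*}}=\mathrm{id}_{A^{*}}$ closes the computation. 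The paper's approach is shorter and minimal; yours is more constructive, exhibiting the splitting map explicitly rather than deducing its existence abstractly. Both are valid, and the extra length in your version buys no additional generality here — though the explicit section can be convenient when one later needs to track the splitting through functors such as $\Gamma_{\fg,\fk}$.
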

    \begin{proof} This follows from the fact that $C^*$ is injective in $\widetilde{Tens}_{\fg}$.
      \end{proof}

      \begin{lem}\label{lem:injective} The functor $\mathrm{R}: \widetilde{\mathbb T}_{\fg,\fk}\to  \widetilde{\mathbb T}_{\fg',\fk'}$ sends an  indecomposable  injective object to an injective object.
\end{lem}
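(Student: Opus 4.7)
The plan is to reduce the statement, via Lemma~\ref{lem: R1}(2) and Lemma~\ref{lem:commutative}, to showing that $\mathrm{R}((\bV^{{\boldsymbol{\mu}},\boldsymbol{\lambda}})^*)$ is injective in $\widetilde{Tens}_{\fg'}$. Indeed, every indecomposable injective of $\widetilde{\mathbb T}_{\fg,\fk}$ has by Lemma~\ref{lem: R1}(4) the form $\bI=\Gamma_{\fg,\fk}((\bV^{{\boldsymbol{\mu}},\boldsymbol{\lambda}})^*)$, and Lemma~\ref{lem:commutative} gives
\[
\mathrm{R}(\bI)=\mathrm{R}\bigl(\Gamma_{\fg,\fk}((\bV^{{\boldsymbol{\mu}},\boldsymbol{\lambda}})^*)\bigr)=\Gamma_{\fg',\fk'}\bigl(\mathrm{R}((\bV^{{\boldsymbol{\mu}},\boldsymbol{\lambda}})^*)\bigr).
\]
Once I know the argument of $\Gamma_{\fg',\fk'}$ is injective in $\widetilde{Tens}_{\fg'}$, applying Lemma~\ref{lem: R1}(2) to $\fg'$ and $\fk'$ produces an injective object of $\widetilde{\mathbb T}_{\fg',\fk'}$, as desired.

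To establish injectivity of $\mathrm{R}((\bV^{{\boldsymbol{\mu}},\boldsymbol{\lambda}})^*)$ in $\widetilde{Tens}_{\fg'}$, I would first observe that $(\bV^{{\boldsymbol{\mu}},\boldsymbol{\lambda}})^*$ is a direct summand of $(\mathbb{S}_{\boldsymbol{\mu}}(\bV)\otimes\mathbb{S}_{\boldsymbol{\lambda}}(\bV_*))^*$. This follows by dualizing the socle inclusion
\[
0\to\bV^{{\boldsymbol{\mu}},\boldsymbol{\lambda}}\to\mathbb{S}_{\boldsymbol{\mu}}(\bV)\otimes\mathbb{S}_{\boldsymbol{\lambda}}(\bV_*)\to Q\to 0
\]
and invoking Lemma~\ref{lem:dual}, which asserts that the dual sequence splits. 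Since $\mathrm{R}$ (being the functor of $\fk_r$-invariants) preserves direct summands, $\mathrm{R}((\bV^{{\boldsymbol{\mu}},\boldsymbol{\lambda}})^*)$ is a direct summand of $\mathrm{R}((\mathbb{S}_{\boldsymbol{\mu}}(\bV)\otimes\mathbb{S}_{\boldsymbol{\lambda}}(\bV_*))^*)$, which by Lemma~\ref{lem:identity} decomposes as a \emph{finite} direct sum
\[
\bigoplus_{\boldsymbol{\mu}',\boldsymbol{\lambda}',{\boldsymbol{\gamma}}}N^{\boldsymbol{\mu}}_{\boldsymbol{\mu}',{\boldsymbol{\gamma}}}N^{\boldsymbol{\lambda}}_{\boldsymbol{\lambda}',{\boldsymbol{\gamma}}}\bigl(\mathbb{S}_{\boldsymbol{\mu}'}(\bV')\otimes\mathbb{S}_{\boldsymbol{\lambda}'}(\bV'_*)\bigr)^{*}
\]
(finiteness follows from $|{\boldsymbol{\gamma}}|\leq\min(|\boldsymbol{\mu}|,|\boldsymbol{\lambda}|)$ in the Littlewood-Richardson constraint).

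It remains to verify that each summand $(\mathbb{S}_{\boldsymbol{\mu}'}(\bV')\otimes\mathbb{S}_{\boldsymbol{\lambda}'}(\bV'_*))^{*}$ is injective in $\widetilde{Tens}_{\fg'}$. For this, I would induct on the Loewy length of the (finite length) module $\mathbb{S}_{\boldsymbol{\mu}'}(\bV')\otimes\mathbb{S}_{\boldsymbol{\lambda}'}(\bV'_*)$: the short exact sequence extracting the top socle layer, dualized and split via Lemma~\ref{lem:dual}, writes its dual as a direct sum of a semisimple dual and a module of shorter Loewy length. Iterating shows the dual is a finite direct sum of modules of the form $(\bV'^{\boldsymbol{\alpha},\boldsymbol{\beta}})^{*}$, which are the indecomposable injectives of $\widetilde{Tens}_{\fg'}$. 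A finite direct sum of injectives is injective, and direct summands of injectives are injective (via the additivity of $\Hom$), completing the argument. The main obstacle is essentially bookkeeping: one must make sure the finiteness of all the direct sums involved (so that standard injectivity properties apply without appealing to Grothendieck-category hypotheses on $\widetilde{Tens}_{\fg'}$) is maintained throughout, and that Lemma~\ref{lem:dual} is actually applicable at each step because all the modules involved genuinely lie in $\widetilde{Tens}_{\fg'}$.
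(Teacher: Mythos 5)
Your proof is correct and follows essentially the same route as the paper's, relying on the same three key lemmas (\ref{lem:commutative}, \ref{lem:identity}, \ref{lem:dual}) to reduce to the injectivity of a direct summand of a finite sum of dual tensor modules. The only differences are organizational: you commute $\mathrm{R}$ past $\Gamma_{\fg,\fk}$ at the outset and perform the direct-summand reduction inside $\widetilde{Tens}_{\fg'}$ before reapplying $\Gamma_{\fg',\fk'}$ (also rederiving by Loewy-length induction that duals in $\widetilde{Tens}_{\fg'}$ are injective, a fact the paper cites from \cite{PS} and already uses in the proof of Lemma~\ref{lem:dual}), whereas the paper establishes injectivity of $\mathrm{R}(\bP^{\boldsymbol{\lambda},\boldsymbol{\mu}})$ directly and passes to the direct summand $\mathrm{R}(\bI^{\boldsymbol{\lambda},\boldsymbol{\mu}})$ using left exactness of $\mathrm{R}$.
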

\begin{proof} Let $\bP^{\boldsymbol{\lambda},\boldsymbol{\mu}}=\Gamma_{\fg,\fk}((\mathbb{S}_{\boldsymbol{\lambda}}(\bV)\otimes \mathbb{S}_{{\boldsymbol{\mu}}}(\bV_*))^*)$. Then by Lemma \ref{lem:commutative} we have
  $$\mathrm{R}(\bP^{\boldsymbol{\lambda},\boldsymbol{\mu}})=\Gamma_{\fg',\fk'}(\mathrm{R}((\mathbb{S}_{\boldsymbol{\lambda}}(\bV)\otimes \mathbb{S}_{{\boldsymbol{\mu}}}(\bV_*))^*)),$$  and hence by Lemma \ref{lem:identity}
  \begin{equation} \label{formula RI} \mathrm{R}(\bP^{\boldsymbol{\lambda},\boldsymbol{\mu}})=\bigoplus_{\boldsymbol{\lambda'},\boldsymbol{\mu'},{\boldsymbol{\gamma}}} N^{\boldsymbol{\lambda}}_{\boldsymbol{\lambda}',{\boldsymbol{\gamma}}}N^{{\boldsymbol{\mu}}}_{{\boldsymbol{\mu}}',{\boldsymbol{\gamma}}}\Gamma_{\fg',\fk'}((\mathbb{S}_{\boldsymbol{\lambda}'}(\mathrm{R}(\bV))\otimes \mathbb{S}_{{\boldsymbol{\mu}}'}(\mathrm{R}(\bV_*)))^*).\end{equation}
  Therefore, $\mathrm{R}(\bP^{\boldsymbol{\lambda},\boldsymbol{\mu}})$ is injective in $\widetilde{\mathbb T}_{\fg',\fk'}$. Every indecomposable injective object in  $\widetilde{\mathbb T}_{\fg,\fk}$ is isomorphic to $\Gamma_{\fg,\fk}(\bL^*)$
  for some simple object   $\bL=\bV^{\boldsymbol{\lambda},{\boldsymbol{\mu}}}$, and by  Lemma \ref{lem:dual},  $\Gamma_{\fg,\fk}(\bL^*)$ is a  direct summand of $\bP^{\boldsymbol{\lambda},\boldsymbol{\mu}}=\Gamma_{\fg,\fk}((\mathbb{S}_{\boldsymbol{\lambda}}(\bV)\otimes \mathbb{S}_{{\boldsymbol{\mu}}}(\bV_*))^*)$. Since the functor $\mathrm{R}$ is left exact,  $\mathrm{R}(\Gamma_{\fg,\fk}(\bL^*))$ is a direct summand of $\mathrm{R}(\bP^{\boldsymbol{\lambda},\boldsymbol{\mu}})$.  Hence, $\mathrm{R}(\Gamma_{\fg,\fk}(\bL^*))$ is injective in  $\widetilde{\mathbb T}_{\fg',\fk'}$.
\end{proof}
\begin{lem}\label{aux0}  Let $\bV=V_n\oplus \bbW$ and $\bV_*=V_n^*\oplus\bbW_*$ be decompositions with $\dim V_n=n$,   $\bbW^\perp=V_n^*$ and $\bbW_*^\perp=V_n$.
Let $\fs$ be the commutator subalgebra of  $\bbW\otimes\bbW_*$.
 Let $\bM\in\mathbb T_{\fg}$ be a module such that all its simple constituents are of the form
  $\bV^{\boldsymbol{\lambda},{\boldsymbol{\mu}}}$ with $|\boldsymbol{\lambda}|+|{\boldsymbol{\mu}}|\leq n$. Then the length of $\bM^{\fs}$ in the category of $\mathfrak{sl}(n)$-modules equals the length of $\bM$ in
  $\mathbb T_{\fg}$.
\end{lem}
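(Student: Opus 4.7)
The plan is to induct on the length of $\bM$ and base the induction on an explicit computation of $\fs$-invariants of tensor modules.

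The crucial base computation is
$$(\bV^{\otimes p}\otimes\bV_*^{\otimes q})^{\fs}=V_n^{\otimes p}\otimes (V_n^*)^{\otimes q}.$$
Expanding via $\bV=V_n\oplus\bbW$ and $\bV_*=V_n^*\oplus\bbW_*$, each summand has the form $V_n^{\otimes a}\otimes\bbW^{\otimes(p-a)}\otimes(V_n^*)^{\otimes b}\otimes\bbW_*^{\otimes(q-b)}$. Since the annihilator relations $\bbW^{\perp}=V_n^*$ and $\bbW_*^{\perp}=V_n$ force $\fs$ to act trivially on both $V_n$ and $V_n^*$, I reduce to showing $(\bbW^{\otimes(p-a)}\otimes\bbW_*^{\otimes(q-b)})^{\fs}=0$ whenever $(a,b)\ne(p,q)$. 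By (\ref{socle of S}) applied to $\mathbb T_{\fs}$ with $\fs\cong\mathfrak{sl}(\bbW)$, the socle of $\bbW^{\otimes r}\otimes\bbW_*^{\otimes s}$ consists of simples $\bbW^{\boldsymbol{\alpha},\boldsymbol{\beta}}$ with $|\boldsymbol{\alpha}|=r$ and $|\boldsymbol{\beta}|=s$, so the trivial $\fs$-module $\mathbb C=\bbW^{\emptyset,\emptyset}$ lies in the socle---and hence contributes $\fs$-invariants---only when $r=s=0$.

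For the base of the induction, take a simple $\bL=\bV^{\boldsymbol{\lambda},\boldsymbol{\mu}}$ with $|\boldsymbol{\lambda}|+|\boldsymbol{\mu}|\le n$ and realize it inside $\bV^{\otimes p}\otimes\bV_*^{\otimes q}$ ($p=|\boldsymbol{\lambda}|$, $q=|\boldsymbol{\mu}|$) as a single $(S_p\times S_q)$-isotypic copy inside the socle $\bigcap_{i,j}\ker\Phi_{ij}$ of (\ref{summands})--(\ref{contractions}). Both the Young symmetrizer of shape $(\boldsymbol{\lambda},\boldsymbol{\mu})$ and the contractions $\Phi_{ij}$ are $\fs$-equivariant and restrict to the analogous operations on the $\fs$-invariants $V_n^{\otimes p}\otimes (V_n^*)^{\otimes q}$, so $\bL^{\fs}$ is obtained by the same Young symmetrization applied to the intersection of contraction kernels inside $V_n^{\otimes p}\otimes (V_n^*)^{\otimes q}$. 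In the stable range $p+q\le n$, classical Schur--Weyl identifies this with the simple $\mathfrak{sl}(n)$-module $V_n^{\boldsymbol{\lambda},\boldsymbol{\mu}}$, and a highest-weight vector of $\bL$ chosen with indices in $V_n$ and $V_n^*$ certifies that $\bL^{\fs}\ne 0$; therefore $\bL^{\fs}\cong V_n^{\boldsymbol{\lambda},\boldsymbol{\mu}}$ has length $1$.

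For the inductive step, given $\bM$ of length $\ell\ge 2$, choose $0\to A\to\bM\to C\to 0$ with $A$ simple and $C$ of length $\ell-1$. Left exactness of $(-)^{\fs}$ together with the base case and the inductive hypothesis give $\mathrm{length}(\bM^{\fs})\le\mathrm{length}(A^{\fs})+\mathrm{length}(C^{\fs})=\ell$. \emph{The hard part will be} the reverse inequality, namely surjectivity of $\bM^{\fs}\to C^{\fs}$, equivalently exactness of $(-)^{\fs}$ on this subcategory of $\mathbb T_{\fg}$. To address it I would decompose $\bM=\bigoplus_{\nu}V_n^{\nu}\otimes\bM\{\nu\}$ via $\mathfrak{sl}(n)$-semisimplicity (using that $\mathfrak{sl}(n)$ and $\fs$ commute and $\bM$ is locally finite over $\mathfrak{sl}(n)$), reducing the problem to the splitting over $\fs$ of each $\nu$-extension $0\to A\{\nu\}\to\bM\{\nu\}\to C\{\nu\}\to 0$. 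The splitting would be produced by embedding $\bM$ into a finite direct sum of tensor powers $\bigoplus_i\bV^{\otimes p_i}\otimes\bV_*^{\otimes q_i}$ with $p_i+q_i\le n$ (afforded by the injective hull of $\bM$ in $\mathbb T_{\fg}$ being a direct sum of $\mathbb{S}_{\boldsymbol{\lambda}'}(\bV)\otimes\mathbb{S}_{\boldsymbol{\mu}'}(\bV_*)$ with $|\boldsymbol{\lambda}'|+|\boldsymbol{\mu}'|\le n$), then exhibiting an explicit $\fs$-equivariant section using the ambient tensor structure from the base computation. Granted this surjectivity, induction yields $\mathrm{length}(\bM^{\fs})=\ell=\mathrm{length}(\bM)$.
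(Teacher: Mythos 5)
Your preliminary computations (the $\fs$-invariants of tensor powers, the identification $(\bV^{\boldsymbol{\lambda},\boldsymbol{\mu}})^{\fs}\cong V_n^{\boldsymbol{\lambda},\boldsymbol{\mu}}$ via the contraction characterization of the socle, and the consequent upper bound $l(\bM^{\fs})\le l(\bM)$ from left exactness) are correct and essentially parallel the paper's base case.

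The gap is precisely in the step you flagged as ``the hard part.'' You induct on the length $l(\bM)$ via a short exact sequence $0\to A\to\bM\to C\to 0$ with $A$ simple, and you then need surjectivity of $\bM^{\fs}\to C^{\fs}$, i.e.\ exactness of $(-)^{\fs}$ on this subcategory. Your sketch --- decompose $\bM$ by $\mathfrak{sl}(n)$-type, embed $\bM$ into a direct sum $\bigoplus_i\bV^{\otimes p_i}\otimes\bV_*^{\otimes q_i}$, and ``exhibit an explicit $\fs$-equivariant section'' --- is not a proof. An embedding of $\bM$ into an injective $\bI$ gives no splitting of the sequence $0\to A\to\bM\to C\to 0$ over $\fs$; you would need to show that the restriction of the extension class to $\fs$ vanishes (on each $\mathfrak{sl}(n)$-isotypic component), which is the substance of what needs proving and cannot be read off from the ambient tensor structure. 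Moreover, you are attempting to prove something strictly stronger than what is needed --- that the sequence actually \emph{splits} over $\fs$, rather than merely that invariants surject --- which only makes the task harder. As written, the inductive step is circular: concluding $l(\bM^{\fs})=l(\bM)$ by this route requires already knowing exactness of $(-)^{\fs}$, which is equivalent to the statement being proved.

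The paper avoids this entirely by running the induction on a different quantity. Rather than inducting on length, it inducts on $k(\bM)$, the maximum of $|\boldsymbol{\lambda}|+|\boldsymbol{\mu}|$ over simple constituents. Embed $\bM$ into its injective hull $\bI$ in $\mathbb T_{\fg}$ and set $\bN=\bI/\bM$. Because the layers of the socle filtration of $\mathbb{S}_{\boldsymbol{\lambda}}(\bV)\otimes\mathbb{S}_{\boldsymbol{\mu}}(\bV_*)$ strictly decrease $|\boldsymbol{\lambda}'|+|\boldsymbol{\mu}'|$, one gets $k(\bN)<k(\bM)$, so the inductive hypothesis applies to $\bN$, giving $l(\bN^{\fs})=l(\bN)$. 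The injective $\bI$ is a finite direct sum of modules $\mathbb{S}_{\boldsymbol{\lambda}}(\bV)\otimes\mathbb{S}_{\boldsymbol{\mu}}(\bV_*)$ with $|\boldsymbol{\lambda}|+|\boldsymbol{\mu}|\le n$, for which length preservation under $(-)^{\fs}$ is verified directly (exactly your base computation), so $l(\bI^{\fs})=l(\bI)$. Left exactness then gives $l(\bN^{\fs})\ge l(\bI^{\fs})-l(\bM^{\fs})$; if one had $l(\bM^{\fs})<l(\bM)$, this would force $l(\bN^{\fs})>l(\bI)-l(\bM)=l(\bN)$, contradicting the inductive hypothesis. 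This counting argument completely bypasses the need to prove exactness (or, a fortiori, splitting) directly. I'd recommend replacing your inductive step with this one.
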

\begin{proof}
  It follows from (\ref{Schur}) and the fact that $\mathbb{S}_{\boldsymbol{\lambda}}(V_n)$ and $\mathbb{S}_{\boldsymbol{\mu}}(V_n^*)$ are nonzero  (since $\dim V_n\geq |\boldsymbol{\lambda}|,|\boldsymbol{\mu}|$) that
  $$(\mathbb{S}_{\boldsymbol{\lambda}}(\bV)\otimes \mathbb{S}_{\boldsymbol{\mu}}(\bV_*))^{\fs}=\mathbb{S}_{\boldsymbol{\lambda}}(V_n)\otimes \mathbb{S}_{\boldsymbol{\mu}}(V_n^*).$$
The description of the layers of the socle filtration of $\mathbb{S}_{\boldsymbol{\lambda}}(\bV)\otimes \mathbb{S}_{\boldsymbol{\mu}}(\bV_*)$ in (\ref{socle of S}) shows that
the length of $\mathbb{S}_{\boldsymbol{\lambda}}(\bV)\otimes \mathbb{S}_{\boldsymbol{\mu}}(\bV_*)$
  equals the length of $\mathbb{S}_{\boldsymbol{\lambda}}(V_n)\otimes \mathbb{S}_{\boldsymbol{\mu}}(V_n^*)$. Furthermore, since the socle $\bV^{\boldsymbol{\lambda},{\boldsymbol{\mu}}}$ of $\mathbb{S}_{\boldsymbol{\lambda}}(\bV)\otimes \mathbb{S}_{\boldsymbol{\mu}}(\bV_*)$ coincides with the set of vectors annihilated by all contraction maps (see (\ref{contractions})), and the set of vectors in $\mathbb{S}_{\boldsymbol{\lambda}}(V_n)\otimes \mathbb{S}_{\boldsymbol{\mu}}(V_n^*)$ annihilated by all contraction maps is the simple $\mathfrak{sl}(n)$-module  $V_n^{\boldsymbol{\lambda},{\boldsymbol{\mu}}}$, we obtain $(\bV^{\boldsymbol{\lambda},{\boldsymbol{\mu}}})^{\fs}=V_n^{\boldsymbol{\lambda},{\boldsymbol{\mu}}}$.  It then follows from left exactness that the functor
  $(\cdot)^\fs$  does not increase the length.

  Let $\bM\in\mathbb T_{\fg}$, and let $k(\bM)$ be the maximum of $|\boldsymbol{\lambda}|+|{\boldsymbol{\mu}}|$ over all simple constituents $\bV^{\boldsymbol{\lambda},{\boldsymbol{\mu}}}$ of $\bM$. We proceed by proving
  the statement by induction on $k(\bM)$ with the obvious base case $k(\bM)=0$. Consider an exact sequence
  $$0\to\bM\to \bI\to\bN \to 0,$$
  where $\bI$ is an injective hull of $\bM$ in $\mathbb T_{\fg}$. From the description of the socle filtration of an injective module in $\mathbb T_{\fg}$ (see (\ref{socle of S})), we have $k(\bN)<k(\bM)$. Therefore, the length $l(\bN)$ of $\bN$ equals the length $l(\bN^{\fs})$ of $\bN^{\fs}$ by the induction assumption. On the other hand, since $\bI$ is injective and hence isomorphic to a direct sum of $\mathbb{S}_{\boldsymbol{\lambda}}(\bV)\otimes \mathbb{S}_{\boldsymbol{\mu}}(\bV_*)$ with $|\boldsymbol{\lambda}|+|{\boldsymbol{\mu}}|\leq n$, the length of $\bI$ equals the length of $\bI^{\fs}$.  Now if $l(\bM^{\fs})<l(\bM)$, then
  $$l(\bN^{\fs})\geq l(\bI^{\fs})-l(\bM^{\fs})>l(\bI)-l(\bM)=l(\bN),$$
  which is a contradiction.
\end{proof}
\begin{cor}\label{cor:gen}  Let $\fs$ be a subalgebra of $\fg$ as in Lemma~\ref{aux0}, and let $\bM\in\widetilde{\mathbb T}_{\fg}$ be a module such that all its
  simple constituents are of the form
  $\bV^{\boldsymbol{\lambda},{\boldsymbol{\mu}}}$ with $|\boldsymbol{\lambda}|+|{\boldsymbol{\mu}}|\leq n$. Then $\bM=U(\fg)\bM^{\fs}$.
\end{cor}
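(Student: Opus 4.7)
The plan is to argue by contradiction, assuming $\bN := U(\fg)\bM^{\fs}$ is properly contained in $\bM$. Since objects of $\widetilde{\mathbb T}_{\fg}$ have finite Loewy length, the nonzero quotient $\bM/\bN$ has nonzero socle and thus contains a simple submodule $\bL \cong \bV^{\boldsymbol{\lambda}',{\boldsymbol{\mu}}'}$; as a subquotient of $\bM$, it satisfies $|\boldsymbol{\lambda}'|+|{\boldsymbol{\mu}}'|\leq n$. The identification $(\bV^{\boldsymbol{\lambda}',{\boldsymbol{\mu}}'})^{\fs}=V_n^{\boldsymbol{\lambda}',{\boldsymbol{\mu}}'}\neq 0$ established inside the proof of Lemma~\ref{aux0} lets me pick a nonzero element $m\in \bL^{\fs}\subseteq(\bM/\bN)^{\fs}$. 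A lift of $m$ to an $\fs$-invariant $\tilde m\in\bM^{\fs}$ would produce the desired contradiction: such $\tilde m$ lies in $\bN$, yet projects to $m\neq 0$ in $\bM/\bN$.

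To construct the lift, I reduce to a finite-length situation. Pick a preimage $m'\in\bM$ of $m$ together with a finite-length submodule $\bP\subseteq\bM$ containing $m'$ (the existence of $\bP$ is the main obstacle, discussed below). Set $\bQ:=\bP\cap\bN$; the natural isomorphism $\bP/\bQ\xrightarrow{\sim}(\bP+\bN)/\bN\hookrightarrow\bM/\bN$ carries the image $\bar m$ of $m'$ in $\bP/\bQ$ to $m$, so $\bar m$ is a nonzero $\fs$-invariant in $\bP/\bQ$. Each of $\bP$, $\bQ$, and $\bP/\bQ$ lies in $\mathbb T_{\fg}$ with simple constituents still satisfying $|\boldsymbol{\lambda}|+|{\boldsymbol{\mu}}|\leq n$, so Lemma~\ref{aux0} applies to all three and gives $l(\bP^{\fs})=l(\bP)$, $l(\bQ^{\fs})=l(\bQ)$, and $l((\bP/\bQ)^{\fs})=l(\bP/\bQ)$. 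Combined with left-exactness of $(\cdot)^{\fs}$ on $0\to\bQ\to\bP\to\bP/\bQ\to 0$, these equalities force the natural map $\bP^{\fs}\to(\bP/\bQ)^{\fs}$ to be surjective: its image has length $l(\bP^{\fs})-l(\bQ^{\fs})=l(\bP/\bQ)=l((\bP/\bQ)^{\fs})$, so it fills the target. Hence $\bar m$ lifts to some $\tilde m\in\bP^{\fs}\subseteq\bM^{\fs}$, which closes the contradiction.

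The main obstacle is verifying that every element of $\bM\in\widetilde{\mathbb T}_{\fg}$ lies in some finite-length submodule. The semisimple case is immediate because the simple objects of $\widetilde{\mathbb T}_{\fg}$ coincide with those of $\mathbb T_{\fg}$ and are cyclic of length $1$, but the inductive step on Loewy length requires controlling the intersection of a finitely generated submodule with the socle. This should follow from the embedding of objects of $\widetilde{\mathbb T}_{\fg}$ into the injective modules $(\bV^{\boldsymbol{\mu},\boldsymbol{\lambda}})^{*}$, whose structure is understood; the verification is the most delicate point of the argument.
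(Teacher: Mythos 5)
Your overall strategy shares its engine with the paper's: both hinge on Lemma~\ref{aux0} forcing the functor $(\cdot)^{\fs}$ to be length-preserving (hence exact in the relevant range), and both use this to contradict the assumption $U(\fg)\bM^{\fs}\neq\bM$. The organization differs. The paper first reduces to $\bM\in\mathbb T_{\fg}$ by noting $\bM$ is a direct limit of finite-length modules, then runs a short induction on $l(\bM)$: take $0\to\bN\to\bM\to\bL\to 0$ with $\bL$ simple, deduce exactness of the invariants, conclude $U(\fg)\bM^{\fs}=\bN$ would force $\bL^{\fs}=0$, contradicting Lemma~\ref{aux0}. You instead set $\bN=U(\fg)\bM^{\fs}$ globally, locate a simple submodule of $\bM/\bN$, and build an explicit lift of an $\fs$-invariant by passing to a finite-length $\bP$ and applying Lemma~\ref{aux0} to $\bP$, $\bP\cap\bN$, and $\bP/(\bP\cap\bN)$. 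This is correct, but more machinery is deployed than necessary once one has reduced to finite length; the paper's induction is tidier.

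On the point you flag as a gap: the claim that every element of $\bM\in\widetilde{\mathbb T}_{\fg}$ lies in a finite-length submodule is precisely the paper's opening remark ``$\bM$ is a direct limit of modules of finite length.'' This is not a delicate issue requiring the structure of the injectives $(\bV^{\boldsymbol{\mu},\boldsymbol{\lambda}})^{*}$; it follows directly from the large annihilator condition. For $m\in\bM$, the annihilator $\mathrm{Ann}_{\fg}m$ contains the commutator subalgebra of the centralizer of some finite-dimensional $\fs_0\subset\fg$, so the cyclic module $U(\fg)m$ is an integrable module with that annihilator property, hence (by the standard analysis of such modules, as in \cite{PStyr,DPS}) a subquotient of a finite sum of $\bV^{\otimes p}\otimes\bV_*^{\otimes q}$ and therefore of finite length. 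Citing this rather than leaving it open would close your argument; as it stands you have correctly identified the one ingredient you did not supply, and that ingredient is exactly the fact the paper invokes in its first sentence.
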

\begin{proof} Since $\bM$ is a direct limit of modules of finite length it suffices to prove the statement for $\bM\in\mathbb{T}_{\fg}$. This can be easily done
  by induction on the length of $\bM$. Indeed, consider an exact sequence
  $0\to \bN\to \bM\to \bL\to 0$ with simple $\bL$. Lemma~\ref{aux0} implies that $0\to \bN^{\fs}\to \bM^{\fs}\to \bL^{\fs}\to 0$ is also exact, because the functor $(\cdot)^\fs$ is left exact and $l(\bL^{\fs})=l(\bM^{\fs})-l(\bN^{\fs})$. Now if $U(\fg)\bM^{\fs}\neq \bM$ then, since $U(\fg)\bN^{\fs}=\bN$ by the induction assumption, we obtain
  $U(\fg)\bM^{\fs}=\bN$. This implies $\bM^{\fs}=\bN^{\fs}$, and hence $l(\bL^s)=0$, which contradicts Lemma~\ref{aux0}.
  \end{proof}

  \begin{lem}\label{aux1} For any $\bM\in\mathbb T_{\fg,\fk}$ we have $U(\fg)\mathrm{R}(\bM)=\bM$.
  \end{lem}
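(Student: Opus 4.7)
My plan is to apply Corollary~\ref{cor:gen} with a subalgebra $\fs\subset\fg$ of the form required by Lemma~\ref{aux0}, chosen to contain $\fk_r$. Since $\bM$ has finite length, pick an integer $n$ exceeding $|\boldsymbol{\lambda}|+|{\boldsymbol{\mu}}|$ for every simple constituent $\bV^{\boldsymbol{\lambda},{\boldsymbol{\mu}}}$ of $\bM$. Because $\bV'=\bV_1\oplus\cdots\oplus\bV_{r-1}$ is infinite-dimensional, I can find decompositions $\bV=V_n\oplus\bbW$ and $\bV_*=V_n^*\oplus\bbW_*$ of the form in Lemma~\ref{aux0} (namely $\dim V_n=n$, $\bbW^\perp=V_n^*$, $\bbW_*^\perp=V_n$) with the additional property $V_n\subset\bV'$ and $V_n^*\subset(\bV')_*$, so that $\bbW\supseteq\bV_r$ and $\bbW_*\supseteq(\bV_r)_*$. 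The subalgebra $\fs:=[\bbW\otimes\bbW_*,\bbW\otimes\bbW_*]\cong\mathfrak{sl}(\bbW)$ then contains $\fk_r=\mathfrak{sl}(\bV_r)$, and hence $\bM^{\fs}\subseteq\bM^{\fk_r}=\mathrm{R}(\bM)$. Granted the equality $\bM=U(\fg)\bM^{\fs}$ from Corollary~\ref{cor:gen}, we obtain $\bM=U(\fg)\bM^{\fs}\subseteq U(\fg)\mathrm{R}(\bM)\subseteq\bM$, finishing the proof.

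The main obstacle is to justify the application of Corollary~\ref{cor:gen} in this setting: that corollary is stated for $\bM\in\widetilde{\mathbb T}_{\fg}=\widetilde{\mathbb T}_{\fg,\fg}$, whereas a module in $\mathbb T_{\fg,\fk}$ satisfies the large annihilator condition only over each block $\fk_i$, not over all of $\fg$, and so need not lie in $\mathbb T_{\fg}$. My plan to bridge this gap is to verify that the proofs of Lemma~\ref{aux0} and Corollary~\ref{cor:gen} carry over to the $\mathbb T_{\fg,\fk}$-setting. The simple-constituent calculation $(\bV^{\boldsymbol{\lambda},{\boldsymbol{\mu}}})^{\fs}=V_n^{\boldsymbol{\lambda},{\boldsymbol{\mu}}}$ and the left exactness of the functor $(\cdot)^{\fs}$ carry over verbatim. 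The critical step is the length-preservation argument in Lemma~\ref{aux0}, which originally uses the explicit socle filtration of an injective hull in $\mathbb T_{\fg}$ given by a direct sum of modules $\mathbb{S}_{\boldsymbol{\lambda}}(\bV)\otimes\mathbb{S}_{{\boldsymbol{\mu}}}(\bV_*)$; this step must now be phrased using an injective hull in $\widetilde{\mathbb T}_{\fg,\fk}$ built from $\Gamma_{\fg,\fk}((\bV^{{\boldsymbol{\mu}},\boldsymbol{\lambda}})^*)$. The same Schur-functor identities~(\ref{Schur}) and (\ref{socle of S}), combined with the fact that $\bbW\supseteq\bV_r$ and $\bbW_*\supseteq(\bV_r)_*$, should yield the analogous length equality $l(\bI^{\fs})=l(\bI)$ for these new injective hulls, after which the induction on length in the proof of Corollary~\ref{cor:gen} goes through unchanged and delivers $\bM=U(\fg)\bM^{\fs}$.
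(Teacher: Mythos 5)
Your approach differs from the paper's and, as written, has a genuine gap. The paper's proof does \emph{not} apply Corollary~\ref{cor:gen} to $\bM$ as a module over all of $\fg$; instead it restricts $\bM$ to the much smaller subalgebra $\fl = \mathfrak{sl}(\bU)$, where $\bU = V_{k(\bM)}\oplus\bV_r$ with $\dim V_{k(\bM)}=k(\bM)$, so that $\bV_r$ has \emph{finite} codimension in $\bU$. This is the crucial step: because $\fk_r$ has finite corank in $\fl$, the large annihilator condition for $\bM$ over $\fk_r$ upgrades to the large annihilator condition over $\fl$, and hence $\operatorname{Res}_{\fl}\bM$ genuinely lies in $\widetilde{\mathbb T}_{\fl}$. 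Then Corollary~\ref{cor:gen} applies verbatim with $\fg$ replaced by $\fl$ and $\fs=\fk_r$, the hypothesis on $|\boldsymbol{\lambda}|+|\boldsymbol{\mu}|$ being satisfied by the identity $k(\operatorname{Res}_{\fl}\bM)=k(\bM)$ coming from~(\ref{Schur}). This gives $\bM=U(\fl)\mathrm{R}(\bM)\subseteq U(\fg)\mathrm{R}(\bM)$. In other words, the paper shrinks the ambient $\mathfrak{sl}(\infty)$ so that the existing lemma applies, whereas you keep the ambient $\fg$ and enlarge $\fk_r$ to a subalgebra $\fs=\mathfrak{sl}(\bbW)$ with $\bbW\supsetneq\bV_r$.

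The gap in your version is in the claim that Lemma~\ref{aux0}, and hence Corollary~\ref{cor:gen}, ``carry over to the $\mathbb T_{\fg,\fk}$-setting.'' The critical fact needed there is $l(\bI^{\fs})=l(\bI)$ for $\bI$ an injective hull in $\widetilde{\mathbb T}_{\fg,\fk}$. This is not a Schur-functor calculation anymore: $\bI^{\boldsymbol{\lambda},\boldsymbol{\mu}}=\Gamma_{\fg,\fk}\bigl((\bV^{\boldsymbol{\mu},\boldsymbol{\lambda}})^*\bigr)$ is not of the form $\mathbb{S}_{\boldsymbol{\lambda}}(\bV)\otimes\mathbb{S}_{\boldsymbol{\mu}}(\bV_*)$, and your $\fs$ contains cross-block components of $\fg$ (since $\bbW\cap\bV_i\neq 0$ for $i<r$), so $\fk_r$ has infinite corank in $\fs$ and $\bM$ need not satisfy the large annihilator condition over $\fs$. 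Computing $(\bI^{\boldsymbol{\lambda},\boldsymbol{\mu}})^{\fs}$ and comparing its length to $l(\bI^{\boldsymbol{\lambda},\boldsymbol{\mu}})$ is precisely the kind of statement that is hard to access at this stage. Worse, the natural way to obtain $l(\bI^{\boldsymbol{\lambda},\boldsymbol{\mu}})$ is Corollary~\ref{cor:JHmultiplicities}, but that corollary rests on Lemma~\ref{lem: R2} (exactness of $\mathrm{R}$), which rests on the very Lemma~\ref{aux1} you are trying to prove — so invoking it here would be circular. At the point where Lemma~\ref{aux1} appears, the only available length statement for $\mathbb T_{\fg,\fk}$-injectives is the qualitative finiteness of Corollary~\ref{I finte length}, which is not enough. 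The paper's device of descending to $\fl$ avoids all of this by staying entirely within the already-established $\mathbb T_{\fl}$-theory.
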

  \begin{proof} Recall the definition of $k(\bM)$ from the proof of Lemma \ref{aux0}, and recall the decomposition (\ref{decomp V}).  Let $\bU$ be a subspace of $\bV$, and  $\bU_*$ be a subspace of $\bV_*$ such that  $\bV_r\subset\bU$  and  $(\bV_r)_*\subset\bU_*$, each of codimension $k(\bM)$.

Denote by $\fl\subset\fg$ the commutator subalgebra of  $\bU\otimes\bU_*$, and by $\operatorname{Res}_{\fl}$ the restriction functor from $\mathbb T_{\fg,\fk}$ to $\widetilde{\mathbb T}_{\fl}$. The identity (\ref{Schur}) implies that
    $k(\operatorname{Res}_{\fl}\bM)=k(\bM)$. By Corollary \ref{cor:gen} with $\fg=\fl$ and $\fs=\fk_r$, we get $\bM=U(\fl)\mathrm{R}(\bM)$. The statement follows.
\end{proof}

\begin{lem}\label{lem: R2} The functor $\mathrm{R}:\mathbb T_{\fg,\fk}\to \mathbb T_{\fg',\fk'}$ is exact and sends a simple module
  $\bV^{\boldsymbol{\lambda},{\boldsymbol{\mu}}}\in\mathbb T_{\fg,\fk}$ to the corresponding simple module $\bV^{\boldsymbol{\lambda},{\boldsymbol{\mu}}}\in\mathbb T_{\fg',\fk'}$, and hence induces an isomorphism between the
Grothendieck groups of $\mathbb T_{\fg,\fk}$ and $\mathbb T_{\fg',\fk'}$.
\end{lem}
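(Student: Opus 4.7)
The plan is to prove, in order, that $\mathrm R(\bV^{\lambda,\mu})\cong(\bV')^{\lambda,\mu}$, that $\mathrm R$ is exact, and that the Grothendieck group map is an isomorphism.

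For the action on simples, I would embed $\bV^{\lambda,\mu}$ as the joint kernel $\bigcap_{ij}\ker\Phi_{ij}$ inside the $(\lambda,\mu)$-isotypic component of $\bV^{\otimes p}\otimes \bV_*^{\otimes q}$ with $p=|\lambda|$, $q=|\mu|$. Decomposing $\bV=\bV'\oplus\bV_r$ and $\bV_*=\bV'_*\oplus(\bV_r)_*$ expresses this tensor module as a sum of mixed monomials; since $\fk_r=\mathfrak{sl}(\bV_r)$ has no nonzero invariants in any $\bV_r^{\otimes a}\otimes(\bV_r)_*^{\otimes b}$ with $(a,b)\ne(0,0)$, one obtains $\mathrm R(\bV^{\otimes p}\otimes \bV_*^{\otimes q})=(\bV')^{\otimes p}\otimes(\bV'_*)^{\otimes q}$. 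The contraction maps restrict to contractions on this $\fk_r$-invariant subspace, and by left exactness $\mathrm R(\bV^{\lambda,\mu})$ is their joint kernel in the appropriate isotypic component of $(\bV')^{\otimes p}\otimes(\bV'_*)^{\otimes q}$, which is $(\bV')^{\lambda,\mu}$.

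For exactness, left exactness of $\mathrm R=(-)^{\fk_r}$ is automatic, so the core claim is the length identity $l_{\fg'}(\mathrm R(\bM))=l_\fg(\bM)$ for every $\bM\in\mathbb T_{\fg,\fk}$. Applying $\mathrm R$ to a composition series of $\bM$, left exactness combined with the first step forces each successive quotient in the filtered $\mathrm R(\bM)$ to embed into a simple module, giving $l_{\fg'}(\mathrm R(\bM))\le l_\fg(\bM)$. For the reverse, take any composition series $0=N_0\subsetneq\cdots\subsetneq N_l=\bM$; every $N_i$ lies in $\mathbb T_{\fg,\fk}$, so Lemma~\ref{aux1} gives $N_i=U(\fg)N_i^{\fk_r}$. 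If $N_{i-1}^{\fk_r}=N_i^{\fk_r}$ for some $i$, then $N_i=U(\fg)N_i^{\fk_r}=U(\fg)N_{i-1}^{\fk_r}=N_{i-1}$, contradicting strictness. Hence $N_0^{\fk_r}\subsetneq\cdots\subsetneq N_l^{\fk_r}=\mathrm R(\bM)$ is a strict chain and $l_{\fg'}(\mathrm R(\bM))\ge l_\fg(\bM)$. The resulting length equality upgrades left exactness to exactness: in $0\to A\to B\to C\to 0$, the image $X$ of $\mathrm R(B)\to \mathrm R(C)$ satisfies $l(X)=l(B)-l(A)=l(C)=l(\mathrm R(C))$, forcing $X=\mathrm R(C)$.

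The Grothendieck group statement then follows formally: both $K_0(\mathbb T_{\fg,\fk})$ and $K_0(\mathbb T_{\fg',\fk'})$ are free abelian groups on the classes of the simples $\bV^{\lambda,\mu}$ indexed by pairs of Young diagrams, and the exact functor $\mathrm R$ matches these generators bijectively. The main subtlety is the reverse length inequality; the key insight is that Lemma~\ref{aux1} converts the question ``does $\mathrm R$ separate submodules?'' into the tautology ``every module in $\mathbb T_{\fg,\fk}$ is generated by its $\fk_r$-invariants'', bypassing any direct application of Lemma~\ref{aux0} to modules that need not lie in $\mathbb T_\fg$.
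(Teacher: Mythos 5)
Your proof is correct and takes essentially the same route as the paper: the action on simples is computed via the contraction-kernel description, left exactness gives $l(\mathrm R(\bM))\le l(\bM)$, and Lemma~\ref{aux1} supplies the reverse inequality. Your chain argument is a clean reorganization of the paper's induction on $l(\bM)$, but both pivot on the same fact that $\bN=U(\fg)\bN^{\fk_r}$ forces $\mathrm R$ to separate strictly increasing submodules.
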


\begin{proof}
Since $\bV^{\boldsymbol{\lambda},{\boldsymbol{\mu}}}$ is in fact an object of $\mathbb T_{\fg}$, the statement about simple modules follows by the argument concerning contraction maps from the proof of Lemma \ref{aux0}.

Since $\mathrm{R}$ is left exact, we have  the inequality
  \begin{equation}\label{eqn:length}
    l(\mathrm{R}(\bM))\leq l(\bM).
    \end{equation}
Thus, to prove exactness of $\mathrm{R}$ it suffices to show  that $\mathrm{R}$ preserves the length, i.e. $l(\bM)=l(\mathrm{R}(\bM))$. We prove this by induction on $l(\bM)$.
Consider an exact sequence of $\fg$-modules
$$0\to \bN\to \bM\to \bL\to 0,$$
such that $\bL$ is simple. By the induction hypothesis we have $l(\mathrm{R}(\bN))=l(\bN)$. If we assume that $l(\mathrm{R}(\bM))<l(\bM)$, then $l(\mathrm{R}(\bM))=l(\bN)$ and so $\mathrm{R}(\bN)=\mathrm{R}(\bM)$.  But then by Lemma~\ref{aux1}, we have $\bN=\bM$, which is a contradiction.
\end{proof}

\begin{cor}\label{I finte length} For any $\boldsymbol{\lambda},\boldsymbol{\mu}$, the module
$\Gamma_{\fg,\fk}((\mathbb{S}_{\boldsymbol{\lambda}}(\bV)\otimes \mathbb{S}_{{\boldsymbol{\mu}}}(\bV_*))^*)$ has finite length.
Hence, the module $\bI^{\boldsymbol{\lambda},{\boldsymbol{\mu}}}:=\Gamma_{\fg,\fk}((\bV^{{\boldsymbol{\mu}},\boldsymbol{\lambda}} )^*)$ has finite length and is an object of the category $\mathbb T_{\fg,\fk}$.
\end{cor}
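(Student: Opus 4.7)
Plan: I would prove the corollary by induction on the number $r$ of diagonal blocks of $\fk$. In the base case $r=1$, we have $\fk=\fg$, so $\mathbb{T}_{\fg,\fk}=\mathbb{T}_{\fg}$. The module $\mathbb{S}_{\boldsymbol{\lambda}}(\bV)\otimes\mathbb{S}_{\boldsymbol{\mu}}(\bV_*)$ is injective in $\mathbb{T}_{\fg}$ and of finite length by the explicit socle description (\ref{socle of S}), since the filtration is indexed by Young diagrams $\boldsymbol{\gamma}$ with $|\boldsymbol{\gamma}|\leq\min(|\boldsymbol{\lambda}|,|\boldsymbol{\mu}|)$. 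Its algebraic dual in $\widetilde{Tens}_{\fg}$ decomposes as a finite direct sum of indecomposable injectives $(\bV^{\boldsymbol{\mu}',\boldsymbol{\lambda}'})^*$, and since $\Gamma_{\fg,\fg}$ commutes with direct sums and sends each $(\bV^{\boldsymbol{\mu}',\boldsymbol{\lambda}'})^*$ to the finite length injective $\mathbb{S}_{\boldsymbol{\lambda}'}(\bV)\otimes\mathbb{S}_{\boldsymbol{\mu}'}(\bV_*)$ of $\mathbb{T}_{\fg}$, the module $\Gamma_{\fg,\fg}((\mathbb{S}_{\boldsymbol{\lambda}}(\bV)\otimes\mathbb{S}_{\boldsymbol{\mu}}(\bV_*))^*)$ has finite length.

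For the inductive step, set $\bP:=\Gamma_{\fg,\fk}((\mathbb{S}_{\boldsymbol{\lambda}}(\bV)\otimes\mathbb{S}_{\boldsymbol{\mu}}(\bV_*))^*)$. Formula (\ref{formula RI}) from Lemma \ref{lem:injective} expresses $\mathrm{R}(\bP)$ as a direct sum (finite, as only finitely many $\boldsymbol{\gamma}$ fit inside both $\boldsymbol{\lambda}$ and $\boldsymbol{\mu}$) of modules of the form $\Gamma_{\fg',\fk'}((\mathbb{S}_{\boldsymbol{\lambda}'}(\mathrm{R}(\bV))\otimes\mathbb{S}_{\boldsymbol{\mu}'}(\mathrm{R}(\bV_*)))^*)$. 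Applying the induction hypothesis to $\fg'\cong\mathfrak{sl}(\infty)$ with the reductive subalgebra $\fk'=\fk_1\oplus\cdots\oplus\fk_{r-1}$ (having $r-1$ blocks), each such summand has finite length; therefore $\mathrm{R}(\bP)$ has finite length.

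To promote this to finite length of $\bP$ itself, I would first observe that $k(\bP)<\infty$: since $\bP$ embeds into $(\mathbb{S}_{\boldsymbol{\lambda}}(\bV)\otimes\mathbb{S}_{\boldsymbol{\mu}}(\bV_*))^*$, the Littlewood--Richardson decomposition of (\ref{socle of S}) bounds the indices of its simple constituents. This enables the proof of Lemma \ref{aux1} to carry over verbatim to $\widetilde{\mathbb{T}}_{\fg,\fk}$, giving $\bP=U(\fg)\mathrm{R}(\bP)$; so $\bP$ is finitely generated over $U(\fg)$. On the other hand, by Lemma \ref{lem: R2} every finite length submodule $\bN\subseteq\bP$ satisfies $l(\bN)=l(\mathrm{R}(\bN))\leq l(\mathrm{R}(\bP))$, so every ascending chain of finite length submodules of $\bP$ stabilizes. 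A finitely generated module that is exhausted by such a stable chain of finite-length submodules has finite length, so $\bP$ has finite length.

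The second assertion follows immediately: by Lemma \ref{lem:dual} applied to the socle inclusion $\bV^{\boldsymbol{\mu},\boldsymbol{\lambda}}\hookrightarrow\mathbb{S}_{\boldsymbol{\mu}}(\bV)\otimes\mathbb{S}_{\boldsymbol{\lambda}}(\bV_*)$, the dual module $(\bV^{\boldsymbol{\mu},\boldsymbol{\lambda}})^*$ is a direct summand of $(\mathbb{S}_{\boldsymbol{\mu}}(\bV)\otimes\mathbb{S}_{\boldsymbol{\lambda}}(\bV_*))^*$, so $\bI^{\boldsymbol{\lambda},\boldsymbol{\mu}}=\Gamma_{\fg,\fk}((\bV^{\boldsymbol{\mu},\boldsymbol{\lambda}})^*)$ is a direct summand of $\Gamma_{\fg,\fk}((\mathbb{S}_{\boldsymbol{\mu}}(\bV)\otimes\mathbb{S}_{\boldsymbol{\lambda}}(\bV_*))^*)$, which has finite length by the first part with $\boldsymbol{\lambda},\boldsymbol{\mu}$ swapped. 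The main obstacle in this plan is the third paragraph---extending Lemmas \ref{aux1} and \ref{lem: R2} to modules in $\widetilde{\mathbb{T}}_{\fg,\fk}$ with finite $k$-value and then carefully arguing that finite generation together with a uniform bound on the lengths of finite length submodules forces finite length.
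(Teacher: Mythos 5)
Your overall strategy coincides with the paper's: the base case $r=1$ is precisely the content of Proposition 4.5 of \cite{DPS} (which the paper simply cites), and the inductive step uses formula (\ref{formula RI}) to control $\mathrm{R}(\bP)$, exactly as the paper intends. Your re-derivation of the $r=1$ case is fine (up to a harmless transposition of indices $\boldsymbol{\lambda}',\boldsymbol{\mu}'$), and your treatment of the second assertion via Lemma~\ref{lem:dual} is the same as the paper's.

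Where your write-up goes beyond the paper is in making the ``promotion'' from $l(\mathrm{R}(\bP))<\infty$ to $l(\bP)<\infty$ explicit, which the paper leaves implicit in the phrase ``the first claim follows by induction.'' The essential ingredient you isolate --- that by Lemmas~\ref{aux1} and~\ref{lem: R2} any finite-length submodule $\bN\subseteq\bP$ satisfies $l(\bN)=l(\mathrm{R}(\bN))\le l(\mathrm{R}(\bP))$ --- is exactly right, and combined with the (implicit, but also used in the proof of Corollary~\ref{cor:gen}) fact that objects of $\widetilde{\mathbb T}_{\fg,\fk}$ are unions of their finite-length submodules, this alone gives $l(\bP)\le l(\mathrm{R}(\bP))<\infty$. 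You do not actually need the detour through $k(\bP)<\infty$ and the extension of Lemma~\ref{aux1} to $\widetilde{\mathbb T}_{\fg,\fk}$, and that detour contains the one questionable step in your argument: the claim that $k(\bP)<\infty$ because $\bP$ embeds into $(\mathbb{S}_{\boldsymbol{\lambda}}(\bV)\otimes\mathbb{S}_{\boldsymbol{\mu}}(\bV_*))^*$ is not justified by (\ref{socle of S}), since (\ref{socle of S}) describes the socle filtration of the tensor module itself, not of its algebraic dual, whose simple constituents have unbounded $|\boldsymbol{\lambda}'|+|\boldsymbol{\mu}'|$ in general (the dual $(\bV^{\boldsymbol{\mu},\boldsymbol{\lambda}})^*$ is a very large module in $\widetilde{Tens}_\fg$). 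Dropping that paragraph in favor of the direct bounded-length argument removes the gap and brings the proof in line with what the paper's terse ``follows by induction'' must mean.
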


\begin{proof}
It was proven in \cite{DPS} that $\Gamma_{\fg,\fg}((\mathbb{S}_{\boldsymbol{\lambda}}(\bV)\otimes \mathbb{S}_{{\boldsymbol{\mu}}}(\bV_*))^*)$  has finite length in  $\widetilde{\mathbb T}_{\fg}$ (see the proof of Proposition 4.5 in \cite{DPS} and note that the functor $\Gamma_{\fg,\fg}$ is denoted by $\mathcal B$ in \cite{DPS}).  Using  (\ref{formula RI}), the first claim follows by induction on the number $r$ of components in the decomposition of $\bV$. For the second claim, observe that Lemma~\ref{lem:dual} implies $\bI^{\boldsymbol{\lambda},{\boldsymbol{\mu}}}$ is isomorphic to a direct summand of the module $\Gamma_{\fg,\fk}((\mathbb{S}_{\boldsymbol{\mu}}(\bV)\otimes \mathbb{S}_{{\boldsymbol{\lambda}}}(\bV_*))^*)$.
\end{proof}

\begin{lem}\label{cor:Roninjectives} Let $\bI^{\boldsymbol{\lambda},{\boldsymbol{\mu}}}$ denote an injective hull of the simple module $\bV^{\boldsymbol{\lambda},{\boldsymbol{\mu}}}$ in $\mathbb T_{\fg,\fk}$,  and let $\bJ^{\boldsymbol{\lambda},{\boldsymbol{\mu}}}$ denote an injective hull of $\mathrm{R}(\bV^{\boldsymbol{\lambda},{\boldsymbol{\mu}}})$ in $\mathbb T_{\fg',\fk'}$. Then
  $$\mathrm{R}(\bI^{\boldsymbol{\lambda},{\boldsymbol{\mu}}})=\bigoplus_{\boldsymbol{\lambda'},\boldsymbol{\mu'},{\boldsymbol{\gamma}}} N^{\boldsymbol{\lambda}}_{\boldsymbol{\lambda}',{\boldsymbol{\gamma}}}N^{{\boldsymbol{\mu}}}_{{\boldsymbol{\mu}}',{\boldsymbol{\gamma}}}\bJ^{\boldsymbol{\lambda}',{\boldsymbol{\mu}}'}.$$
  \end{lem}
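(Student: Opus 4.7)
The plan is to isolate $\bI^{\boldsymbol{\lambda},{\boldsymbol{\mu}}}$ as a direct summand of the larger injective
$$\bP^{{\boldsymbol{\mu}},\boldsymbol{\lambda}} := \Gamma_{\fg,\fk}((\mathbb{S}_{{\boldsymbol{\mu}}}(\bV)\otimes \mathbb{S}_{\boldsymbol{\lambda}}(\bV_*))^*),$$
whose image under $\mathrm{R}$ is computed in Lemma~\ref{lem:injective}, and then to extract $\mathrm{R}(\bI^{\boldsymbol{\lambda},{\boldsymbol{\mu}}})$ by induction on $|\boldsymbol{\lambda}|+|{\boldsymbol{\mu}}|$.

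First I decompose $\bP^{{\boldsymbol{\mu}},\boldsymbol{\lambda}}$ into indecomposable injectives of $\mathbb{T}_{\fg,\fk}$. Iterating Lemma~\ref{lem:dual} along the socle filtration (\ref{socle of S}) of $\mathbb{S}_{{\boldsymbol{\mu}}}(\bV)\otimes \mathbb{S}_{\boldsymbol{\lambda}}(\bV_*)$, the successive quotients all split off upon dualizing, so
\[
(\mathbb{S}_{{\boldsymbol{\mu}}}(\bV)\otimes \mathbb{S}_{\boldsymbol{\lambda}}(\bV_*))^* \;\cong\; \bigoplus_{\boldsymbol{\lambda}',{\boldsymbol{\mu}}',{\boldsymbol{\gamma}}} N^{{\boldsymbol{\mu}}}_{{\boldsymbol{\mu}}',{\boldsymbol{\gamma}}}\, N^{\boldsymbol{\lambda}}_{\boldsymbol{\lambda}',{\boldsymbol{\gamma}}}\, (\bV^{{\boldsymbol{\mu}}',\boldsymbol{\lambda}'})^*
\]
in $\widetilde{Tens}_{\fg}$. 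Since $\Gamma_{\fg,\fk}$ commutes with direct sums and $\bI^{\boldsymbol{\lambda}',{\boldsymbol{\mu}}'} = \Gamma_{\fg,\fk}((\bV^{{\boldsymbol{\mu}}',\boldsymbol{\lambda}'})^*)$ by Corollary~\ref{I finte length}, this yields
\[
\bP^{{\boldsymbol{\mu}},\boldsymbol{\lambda}} \;\cong\; \bigoplus_{\boldsymbol{\lambda}',{\boldsymbol{\mu}}',{\boldsymbol{\gamma}}} N^{{\boldsymbol{\mu}}}_{{\boldsymbol{\mu}}',{\boldsymbol{\gamma}}}\, N^{\boldsymbol{\lambda}}_{\boldsymbol{\lambda}',{\boldsymbol{\gamma}}}\, \bI^{\boldsymbol{\lambda}',{\boldsymbol{\mu}}'}, \qquad(*)
\]
in which the ${\boldsymbol{\gamma}}=\emptyset$ summand is $\bI^{\boldsymbol{\lambda},{\boldsymbol{\mu}}}$ with multiplicity one and every other summand has $|\boldsymbol{\lambda}'|+|{\boldsymbol{\mu}}'|<|\boldsymbol{\lambda}|+|{\boldsymbol{\mu}}|$. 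The identical argument applied in $\widetilde{Tens}_{\fg'}$ expresses each $\bP'^{\tilde{\boldsymbol{\mu}},\tilde{\boldsymbol{\lambda}}}:=\Gamma_{\fg',\fk'}((\mathbb{S}_{\tilde{\boldsymbol{\mu}}}(\mathrm{R}(\bV))\otimes \mathbb{S}_{\tilde{\boldsymbol{\lambda}}}(\mathrm{R}(\bV_*)))^*)$ as a direct sum of $\bJ^{\boldsymbol{\lambda}'',{\boldsymbol{\mu}}''}$ with multiplicities $N^{\tilde{\boldsymbol{\mu}}}_{{\boldsymbol{\mu}}'',\boldsymbol{\delta}}\, N^{\tilde{\boldsymbol{\lambda}}}_{\boldsymbol{\lambda}'',\boldsymbol{\delta}}$.

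Applying $\mathrm{R}$ to $(*)$ gives one expression $\mathrm{R}(\bP^{{\boldsymbol{\mu}},\boldsymbol{\lambda}}) = \bigoplus N^{{\boldsymbol{\mu}}}_{{\boldsymbol{\mu}}',{\boldsymbol{\gamma}}}\,N^{\boldsymbol{\lambda}}_{\boldsymbol{\lambda}',{\boldsymbol{\gamma}}}\,\mathrm{R}(\bI^{\boldsymbol{\lambda}',{\boldsymbol{\mu}}'})$, while combining Lemma~\ref{lem:injective} with the $\fg'$-analog of $(*)$ gives a second expression $\mathrm{R}(\bP^{{\boldsymbol{\mu}},\boldsymbol{\lambda}}) = \bigoplus N^{{\boldsymbol{\mu}}}_{\tilde{\boldsymbol{\mu}},{\boldsymbol{\gamma}}}\,N^{\boldsymbol{\lambda}}_{\tilde{\boldsymbol{\lambda}},{\boldsymbol{\gamma}}}\,N^{\tilde{\boldsymbol{\mu}}}_{{\boldsymbol{\mu}}'',\boldsymbol{\delta}}\,N^{\tilde{\boldsymbol{\lambda}}}_{\boldsymbol{\lambda}'',\boldsymbol{\delta}}\,\bJ^{\boldsymbol{\lambda}'',{\boldsymbol{\mu}}''}$. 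Under the inductive hypothesis that the lemma holds for every $(\boldsymbol{\lambda}',{\boldsymbol{\mu}}')$ with $|\boldsymbol{\lambda}'|+|{\boldsymbol{\mu}}'|<|\boldsymbol{\lambda}|+|{\boldsymbol{\mu}}|$, the contribution of the terms with $|{\boldsymbol{\gamma}}|\ge 1$ to the first expression agrees summand-by-summand with the corresponding part of the second (after renaming ${\boldsymbol{\mu}}'\leftrightarrow\tilde{\boldsymbol{\mu}}$, $\boldsymbol{\lambda}'\leftrightarrow\tilde{\boldsymbol{\lambda}}$); cancelling these equal contributions on both sides leaves $\mathrm{R}(\bI^{\boldsymbol{\lambda},{\boldsymbol{\mu}}})$ equal to the ${\boldsymbol{\gamma}}=\emptyset$ part of the second expression, which via $N^{\boldsymbol{\alpha}}_{\boldsymbol{\beta},\emptyset}=\delta_{\boldsymbol{\alpha},\boldsymbol{\beta}}$ collapses exactly to $\bigoplus_{\boldsymbol{\lambda}'',{\boldsymbol{\mu}}'',\boldsymbol{\delta}} N^{\boldsymbol{\lambda}}_{\boldsymbol{\lambda}'',\boldsymbol{\delta}}\, N^{{\boldsymbol{\mu}}}_{{\boldsymbol{\mu}}'',\boldsymbol{\delta}}\,\bJ^{\boldsymbol{\lambda}'',{\boldsymbol{\mu}}''}$, as desired. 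The base case $\boldsymbol{\lambda}={\boldsymbol{\mu}}=\emptyset$ is immediate from $\bI^{\emptyset,\emptyset}\cong\mathbb{C}\cong\bJ^{\emptyset,\emptyset}$.

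The main obstacle lies in step one: one must verify that iterating Lemma~\ref{lem:dual} along the socle filtration genuinely produces a direct-sum decomposition (and not merely a refinement of a filtration), and keep careful track of the swap between the natural indexing of $(\bV^{{\boldsymbol{\mu}}',\boldsymbol{\lambda}'})^*$ (whose socle is $\bV^{\boldsymbol{\lambda}',{\boldsymbol{\mu}}'}$) and the injective-hull indexing of $\bI^{\boldsymbol{\lambda}',{\boldsymbol{\mu}}'}$. Once $(*)$ is established, the remainder of the proof is the bookkeeping of Littlewood--Richardson multiplicities against the formula of Lemma~\ref{lem:injective}.
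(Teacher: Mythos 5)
Your proof is correct, and it uses the same two ingredients as the paper: the decomposition of $\bP^{{\boldsymbol{\mu}},\boldsymbol{\lambda}}$ into indecomposable injectives (your $(*)$, which is the paper's~(\ref{eq P}) up to the relabelling you flag) together with the formula (\ref{formula RI}) from Lemma~\ref{lem:injective}. The difference lies entirely in the final step. The paper passes to the complexified Grothendieck group $\mathfrak{I}_{\fg,\fk}$ of the additive category of injectives, observes that $\{[\bP^{\boldsymbol{\lambda},{\boldsymbol{\mu}}}]\}$ and $\{[\bI^{\boldsymbol{\lambda},{\boldsymbol{\mu}}}]\}$ are both bases related by a change-of-basis matrix $A$, that the same $A$ relates $\{[\bQ]\}$ to $\{[\bJ]\}$, and that $\mathrm R$ has matrix $A$ in the $\bP/\bQ$ bases; the matrix of $\mathrm R$ in the $\bI/\bJ$ bases is then $A^{-1}AA=A$. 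Your proof instead unwinds this matrix inversion by hand: you exploit the fact that $A$ is unitriangular with respect to $|\boldsymbol{\lambda}|+|{\boldsymbol{\mu}}|$, run an induction on that quantity, and cancel the $|{\boldsymbol{\gamma}}|\geq 1$ summands term by term. This avoids any explicit appeal to the Grothendieck group or to invertibility of $A$, which is pleasantly elementary, but in exchange the final cancellation is a statement about actual modules, so you should say a word about Krull--Schmidt: both $\mathrm R(\bI^{\boldsymbol{\lambda},{\boldsymbol{\mu}}})$ and the term you want to cancel against are finite-length injectives in $\mathbb T_{\fg',\fk'}$ (by Corollary~\ref{I finte length}, Lemma~\ref{lem: R2}, and Lemma~\ref{lem:injective}), hence finite direct sums of indecomposable injectives, so cancellation of a common direct summand is legitimate. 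The ``main obstacle'' you identify (splitting off the dual socle filtration) is not really an obstacle: Lemma~\ref{lem:dual} applied repeatedly to the finite socle filtration of $\mathbb{S}_{{\boldsymbol{\mu}}}(\bV)\otimes\mathbb{S}_{\boldsymbol{\lambda}}(\bV_*)$ does produce a genuine direct sum, exactly as the paper asserts in deriving (\ref{eq P}), and your index bookkeeping $(\bV^{{\boldsymbol{\mu}}',\boldsymbol{\lambda}'})^* \mapsto \bI^{\boldsymbol{\lambda}',{\boldsymbol{\mu}}'}$ matches the paper's convention $\bI^{\boldsymbol{\lambda},{\boldsymbol{\mu}}}=\Gamma_{\fg,\fk}((\bV^{{\boldsymbol{\mu}},\boldsymbol{\lambda}})^*)$ from Corollary~\ref{I finte length}.
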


\begin{proof}  We have  $\bI^{\boldsymbol{\lambda},{\boldsymbol{\mu}}}\cong\Gamma_{\fg,\fk}((\bV^{{\boldsymbol{\mu}},\boldsymbol{\lambda}})^*)$ and $\bJ^{\boldsymbol{\lambda},{\boldsymbol{\mu}}}\cong\Gamma_{\fg',\fk'}((\bV^{{\boldsymbol{\mu}},\boldsymbol{\lambda}})^*)$.
Let $$\bP^{\boldsymbol{\lambda},{\boldsymbol{\mu}}}=\Gamma_{\fg,\fk}((\mathbb{S}_{\boldsymbol{\mu}}(\bV)\otimes \mathbb{S}_{\boldsymbol{\lambda}}(\bV_*))^*),\hspace{.5cm} \bQ^{\boldsymbol{\lambda},{\boldsymbol{\mu}}}=\Gamma_{\fg',\fk'}((\mathbb{S}_{\boldsymbol{\mu}}(\mathrm{R}(\bV))\otimes \mathbb{S}_{\boldsymbol{\lambda}}(\mathrm{R}(\bV_*))^*).$$ Then we have
\begin{equation}\label{eq P}
\bP^{\boldsymbol{\lambda},{\boldsymbol{\mu}}}\cong\bigoplus_{\boldsymbol{\lambda'},\boldsymbol{\mu'},{\boldsymbol{\gamma}}} N_{\boldsymbol{\lambda}',{\boldsymbol{\gamma}}}^{\boldsymbol{\lambda}} N_{{\boldsymbol{\mu}}',{\boldsymbol{\gamma}}}^{{\boldsymbol{\mu}}} \bI^{\boldsymbol{\lambda}',{\boldsymbol{\mu}}'}, \hspace{.5cm}
\bQ^{\boldsymbol{\lambda},{\boldsymbol{\mu}}}\cong\bigoplus_{\boldsymbol{\lambda'},\boldsymbol{\mu'},{\boldsymbol{\gamma}}} N_{\boldsymbol{\lambda}',{\boldsymbol{\gamma}}}^{\boldsymbol{\lambda}} N_{{\boldsymbol{\mu}}',{\boldsymbol{\gamma}}}^{{\boldsymbol{\mu}}} \bJ^{\boldsymbol{\lambda}',{\boldsymbol{\mu}}'}.
\end{equation}
Indeed, using Lemma~\ref{lem:dual}, we can deduce from (\ref{socle of S})  that
$$
(\mathbb{S}_{\boldsymbol{\mu}}(\bV)\otimes \mathbb{S}_{\boldsymbol{\lambda}}(\bV_*))^*=\bigoplus_{\boldsymbol{\lambda'},\boldsymbol{\mu'},{\boldsymbol{\gamma}}} N_{\boldsymbol{\lambda}',{\boldsymbol{\gamma}}}^{\boldsymbol{\lambda}} N_{{\boldsymbol{\mu}}',{\boldsymbol{\gamma}}}^{{\boldsymbol{\mu}}}(\bV^{\boldsymbol{\lambda}',{\boldsymbol{\mu}}'})^*,
$$
and then by applying $\Gamma_{\fg,\fk}$ to both sides we obtain (\ref{eq P}).

By (\ref{formula RI}), we have
$$
\mathrm{R}(\bP^{\boldsymbol{\lambda},{\boldsymbol{\mu}}})=\bigoplus_{\boldsymbol{\lambda'},\boldsymbol{\mu'},{\boldsymbol{\gamma}}}  N_{\boldsymbol{\lambda}',{\boldsymbol{\gamma}}}^{\boldsymbol{\lambda}} N_{{\boldsymbol{\mu}}',{\boldsymbol{\gamma}}}^{{\boldsymbol{\mu}}}  \bQ^{\boldsymbol{\lambda}',{\boldsymbol{\mu}}'}.
$$

Let $\mathfrak{I}_{\fg,\fk}$ denote the complexified Grothendieck group of the additive subcategory of $\mathbb T_{\fg,\fk}$  generated by indecomposable injective modules. Then $\{[\bI^{\boldsymbol{\lambda},{\boldsymbol{\mu}}}]\}$ and  $\{[\bP^{\boldsymbol{\lambda},{\boldsymbol{\mu}}}]\}$ both form a basis for  $\mathfrak{I}_{\fg,\fk}$. Let $A=(A_{\boldsymbol{\lambda}',{\boldsymbol{\mu}}'}^{\boldsymbol{\lambda},{\boldsymbol{\mu}}})$ be the change of basis matrix on $\mathfrak{I}_{\fg,\fk}$ given by (\ref{eq P}) which expresses $\bP^{\boldsymbol{\lambda},{\boldsymbol{\mu}}}$ in terms of $\bI^{\boldsymbol{\lambda},{\boldsymbol{\mu}}}$. The same matrix $A$ expresses  $\bQ^{\boldsymbol{\lambda},{\boldsymbol{\mu}}}$ in terms of $\bJ^{\boldsymbol{\lambda},{\boldsymbol{\mu}}}$ by (\ref{eq P}).

The functor $\mathrm{R}$ induces a linear operator from $\mathfrak{I}_{\fg,\fk}$ to $\mathfrak{I}_{\fg',\fk'}$ which is represented by the matrix $A$ with respect to both bases $\{[\bP^{\boldsymbol{\lambda},{\boldsymbol{\mu}}}]\}$ and $\{[\bQ^{\boldsymbol{\lambda},{\boldsymbol{\mu}}}]\}$. Hence, the  matrix which represents $\mathrm{R}$ with respect to the bases  $\{[\bI^{\boldsymbol{\lambda},{\boldsymbol{\mu}}}]\}$ and $\{[\bJ^{\boldsymbol{\lambda},{\boldsymbol{\mu}}}]\}$ is again $A$ as $A=AA(A^{-1})$.
\end{proof}

  \begin{cor}\label{cor:JHmultiplicities} The Jordan-H\"older multiplicities of the indecomposable injective modules $\bI^{\boldsymbol{\lambda},{\boldsymbol{\mu}}}$ are given by
    $$[\bI^{\boldsymbol{\lambda},{\boldsymbol{\mu}}}:\bV^{\boldsymbol{\lambda}',{\boldsymbol{\mu}}'}]=\sum_{\boldsymbol{\lambda'},\boldsymbol{\mu'},{\boldsymbol{\gamma}}_1,\dots,{\boldsymbol{\gamma}}_r}N^{\boldsymbol{\lambda}}_{{\boldsymbol{\gamma}}_1,\dots,{\boldsymbol{\gamma}}_r,\boldsymbol{\lambda}'}N^{{\boldsymbol{\mu}}}_{{\boldsymbol{\gamma}}_1,\dots,{\boldsymbol{\gamma}}_r,{\boldsymbol{\mu}}'}.$$

    \end{cor}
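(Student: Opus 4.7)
The natural strategy is induction on $r$, the number of blocks in the decomposition $\bV=\bV_1\oplus\cdots\oplus\bV_r$ defining $\fk$.

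For the base case $r=1$ we have $\fk=\fg$, so $\mathbb T_{\fg,\fk}=\mathbb T_{\fg}$ and $\bI^{\boldsymbol{\lambda},\boldsymbol{\mu}}\cong \mathbb{S}_{\boldsymbol{\lambda}}(\bV)\otimes\mathbb{S}_{\boldsymbol{\mu}}(\bV_*)$. In this case the asserted identity reduces to
\[
[\mathbb{S}_{\boldsymbol{\lambda}}(\bV)\otimes\mathbb{S}_{\boldsymbol{\mu}}(\bV_*):\bV^{\boldsymbol{\lambda}',\boldsymbol{\mu}'}]=\sum_{\boldsymbol{\gamma}} N^{\boldsymbol{\lambda}}_{\boldsymbol{\lambda}',\boldsymbol{\gamma}}N^{\boldsymbol{\mu}}_{\boldsymbol{\mu}',\boldsymbol{\gamma}},
\]
which follows immediately from the socle filtration formula (\ref{socle of S}) of \cite{PStyr} by summing over all layers $k=|\boldsymbol{\gamma}|$.

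For the inductive step, I would combine Lemma~\ref{lem: R2} with Lemma~\ref{cor:Roninjectives}. By Lemma~\ref{lem: R2}, the functor $\mathrm{R}$ is exact and preserves simple modules, so Jordan--H\"older multiplicities are unchanged: $[\bI^{\boldsymbol{\lambda},\boldsymbol{\mu}}:\bV^{\boldsymbol{\lambda}',\boldsymbol{\mu}'}]=[\mathrm{R}(\bI^{\boldsymbol{\lambda},\boldsymbol{\mu}}):\bV^{\boldsymbol{\lambda}',\boldsymbol{\mu}'}]$. By Lemma~\ref{cor:Roninjectives},
\[
\mathrm{R}(\bI^{\boldsymbol{\lambda},\boldsymbol{\mu}})\cong\bigoplus_{\boldsymbol{\alpha},\boldsymbol{\beta},\boldsymbol{\gamma}} N^{\boldsymbol{\lambda}}_{\boldsymbol{\alpha},\boldsymbol{\gamma}}N^{\boldsymbol{\mu}}_{\boldsymbol{\beta},\boldsymbol{\gamma}}\,\bJ^{\boldsymbol{\alpha},\boldsymbol{\beta}},
\]
where the $\bJ^{\boldsymbol{\alpha},\boldsymbol{\beta}}$ are indecomposable injectives in $\mathbb T_{\fg',\fk'}$, whose defining decomposition has $r-1$ blocks. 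Applying the inductive hypothesis to each $\bJ^{\boldsymbol{\alpha},\boldsymbol{\beta}}$ yields
\[
[\bI^{\boldsymbol{\lambda},\boldsymbol{\mu}}:\bV^{\boldsymbol{\lambda}',\boldsymbol{\mu}'}]=\sum_{\boldsymbol{\alpha},\boldsymbol{\beta},\boldsymbol{\gamma},\boldsymbol{\gamma}_1,\dots,\boldsymbol{\gamma}_{r-1}} N^{\boldsymbol{\lambda}}_{\boldsymbol{\alpha},\boldsymbol{\gamma}}\,N^{\boldsymbol{\mu}}_{\boldsymbol{\beta},\boldsymbol{\gamma}}\,N^{\boldsymbol{\alpha}}_{\boldsymbol{\gamma}_1,\dots,\boldsymbol{\gamma}_{r-1},\boldsymbol{\lambda}'}\,N^{\boldsymbol{\beta}}_{\boldsymbol{\gamma}_1,\dots,\boldsymbol{\gamma}_{r-1},\boldsymbol{\mu}'}.
\]

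To finish I would invoke the associativity of Littlewood--Richardson coefficients: summing out $\boldsymbol{\alpha}$ (respectively $\boldsymbol{\beta}$) gives $\sum_{\boldsymbol{\alpha}} N^{\boldsymbol{\lambda}}_{\boldsymbol{\alpha},\boldsymbol{\gamma}}N^{\boldsymbol{\alpha}}_{\boldsymbol{\gamma}_1,\dots,\boldsymbol{\gamma}_{r-1},\boldsymbol{\lambda}'}= N^{\boldsymbol{\lambda}}_{\boldsymbol{\gamma}_1,\dots,\boldsymbol{\gamma}_{r-1},\boldsymbol{\gamma},\boldsymbol{\lambda}'}$, and similarly for $\boldsymbol{\mu}$. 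Relabelling $\boldsymbol{\gamma}$ as $\boldsymbol{\gamma}_r$ produces the asserted formula. The only nontrivial piece here is the LR associativity, but that is standard (it amounts to the associativity of the tensor product of Schur functors, or equivalently of the product in the ring of symmetric functions), so I do not expect any real obstacle; the work has essentially all been done in Lemmas~\ref{lem: R2} and~\ref{cor:Roninjectives}.
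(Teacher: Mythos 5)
Your proof is correct and uses essentially the same approach as the paper: iterated application of the functor $\mathrm{R}$, relying on Lemma~\ref{lem: R2} (exactness of $\mathrm{R}$, preservation of simples and hence of Jordan--H\"older multiplicities) and Lemma~\ref{cor:Roninjectives} (the formula for $\mathrm{R}(\bI^{\boldsymbol{\lambda},\boldsymbol{\mu}})$), reduced to the $r=1$ base case given by (\ref{socle of S}). The only cosmetic difference is that the paper packages the iteration as multiplying by powers of the matrix $A$ (so the answer is $(A^r)_{\boldsymbol{\lambda}',\boldsymbol{\mu}'}^{\boldsymbol{\lambda},\boldsymbol{\mu}}$) without explicitly spelling out the Littlewood--Richardson associativity needed to identify this with the multi-index sum, whereas you present it as an induction and make that associativity step explicit; the two formulations are equivalent.
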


\begin{proof}
After applying the functor $\mathrm{R}$ to the module $\bI^{\boldsymbol{\lambda},{\boldsymbol{\mu}}}$ $(r-1)$ times, we obtain a direct sum of injective modules in the  category $\mathbb T_{\fg}$. The multiplicity of each indecomposable injective in this sum is thus determined by applying the matrix $A^{r-1}$ to $[\bI^{\boldsymbol{\lambda},{\boldsymbol{\mu}}}]$. The  Jordan-H\"older multiplicities of an indecomposable injective module in $\mathbb T_{\fg}$ are also given by the matrix $A$ (see \ref{socle of S}). Therefore,
$$
[\bI^{\boldsymbol{\lambda},{\boldsymbol{\mu}}}]=\sum (A^r)_{\boldsymbol{\lambda}',{\boldsymbol{\mu}}'}^{\boldsymbol{\lambda},{\boldsymbol{\mu}}}[\bV^{\boldsymbol{\lambda}',{\boldsymbol{\mu}}'}].
$$
\end{proof}


\subsection{The socle filtration of indecomposable injective objects in $\mathbb T_{\fg,\fk}$}

In this section, we describe the socle filtration of the injective objects $\bI^{\boldsymbol{\lambda},{\boldsymbol{\mu}}}$ in  $\mathbb T_{\fg,\fk}$.

We consider the restriction functor $$\Res_{\fk}:\mathbb T_{\fg,\fk} \rightarrow \mathbb T_{\fk},$$
where $\mathbb T_{\fk}$ denotes the category of integrable $\fk$-modules of finite length which satisfy the large annihilator condition for each $\fk_i$ (recall (\ref{decomp V})).
  Note that simple objects of $\mathbb T_{\fk}$ are outer tensor products of simple objects of the categories $\mathbb T_{\fk_i}$ for each $\fk_i$, $i=1,\ldots,r$, (recall that $\fk_i\cong\mathfrak{sl}(\infty)$); we will use the notation
    $$\bV^{\boldsymbol{\lambda}_1,\dots,\boldsymbol{\lambda}_r,{\boldsymbol{\mu}}_1,\dots,{\boldsymbol{\mu}}_r}:=\bV_1^{\boldsymbol{\lambda}_1,{\boldsymbol{\mu}}_1}\boxtimes\cdots\boxtimes\bV_r^{\boldsymbol{\lambda}_r,{\boldsymbol{\mu}}_r}.$$
    Injective hulls of simple objects in $\mathbb T_{\fk}$ will be denoted by $\bI_{\fk}^{\boldsymbol{\lambda}_1,\dots,\boldsymbol{\lambda}_r,{\boldsymbol{\mu}}_1,\dots,{\boldsymbol{\mu}}_r}$, and they are also outer tensor products of injective $\fk_i$-modules:
    $$\bI_{\fk}^{\boldsymbol{\lambda}_1,\dots,\boldsymbol{\lambda}_r,{\boldsymbol{\mu}}_1,\dots,{\boldsymbol{\mu}}_r}:=\left(\mathbb{S}_{\boldsymbol{\lambda}_1}(\bV_1)\otimes \mathbb{S}_{{\boldsymbol{\mu}}_1}(\bV_1)_*\right)\boxtimes\cdots\boxtimes \left(\mathbb{S}_{\boldsymbol{\lambda}_r}(\bV_r)
    \otimes \mathbb{S}_{{\boldsymbol{\mu}}_r}(\bV_r)_*\right).$$

    Recall that for every object $\bM$ in $ \mathbb T_{\fg,\fk}$ we denote by $k(\bM)$ the maximum of $|\boldsymbol{\lambda}|+|{\boldsymbol{\mu}}|$ for all simple constituents
    $\bV^{\boldsymbol{\lambda},{\boldsymbol{\mu}}}$ of $\bM$. Similarly for every object $\bX$ in $ \mathbb T_{\fk}$ we denote by $c(\bX)$ the maximum of
    $|\boldsymbol{\lambda}_1|+\dots+|\boldsymbol{\lambda}_r|+|{\boldsymbol{\mu}}_1|+\dots+|{\boldsymbol{\mu}}_r|$ for all simple constituents $\bV^{\boldsymbol{\lambda}_1,\dots,\boldsymbol{\lambda}_r,{\boldsymbol{\mu}}_1,\dots,{\boldsymbol{\mu}}_r}$ of $\bX$.
    It follows from Corollary \ref{cor:JHmultiplicities} that
\begin{equation}\label{socidentity}
  k(\bM)=k(\soc \bM),\quad c(\bX)=c(\soc \bX).
\end{equation}
The  identities
\begin{equation}\label{tensidentity}
  k(\bM\otimes \bN)=k(\bM)+k(\bN),\quad c(\bX\otimes \bY)=c(\bX)+c(\bY).
\end{equation}
 follow easily from the Littlewood--Richardson rule, and we leave their proof to the reader.

   \begin{lem}\label{lem:restriction} The restriction functor $\Res_{\fk}$ maps the category $ \mathbb T_{\fg,\fk}$ to the category $\mathbb T_{\fk}$, and it
      maps $\mathbb{S}_{\boldsymbol{\lambda}}(\bV)\otimes \mathbb{S}_{{\boldsymbol{\mu}}}(\bV_*)$ to an injective module. Furthermore, we have the identity
      $$c(\Res_{\fk}\bM)=k(\bM).$$
    \end{lem}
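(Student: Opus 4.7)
The plan is to establish the three assertions in the order (2), (1), (3), since the injectivity statement feeds into the proofs of the other two.

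For (2), I would iterate the branching identity (\ref{Schur}) along the decomposition $\bV = \bV_1 \oplus \cdots \oplus \bV_r$ and the matching dual decomposition $\bV_* = (\bV_1)_* \oplus \cdots \oplus (\bV_r)_*$, producing the $\fk$-module isomorphism
\[
\Res_\fk\bigl(\mathbb{S}_{\boldsymbol{\lambda}}(\bV)\otimes \mathbb{S}_{\boldsymbol{\mu}}(\bV_*)\bigr) \;\cong\; \bigoplus_{(\boldsymbol{\lambda}_i),(\boldsymbol{\mu}_i)} N^{\boldsymbol{\lambda}}_{\boldsymbol{\lambda}_1,\ldots,\boldsymbol{\lambda}_r}\, N^{\boldsymbol{\mu}}_{\boldsymbol{\mu}_1,\ldots,\boldsymbol{\mu}_r}\, \bI_{\fk}^{\boldsymbol{\lambda}_1,\ldots,\boldsymbol{\lambda}_r,\boldsymbol{\mu}_1,\ldots,\boldsymbol{\mu}_r}.
\]
Since each outer tensor factor $\mathbb{S}_{\boldsymbol{\lambda}_i}(\bV_i)\otimes \mathbb{S}_{\boldsymbol{\mu}_i}((\bV_i)_*)$ is injective in $\mathbb T_{\fk_i}$, the outer tensor product is injective in $\mathbb T_\fk$, which yields (2).

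For (1), integrability is inherited from $\fg \supset \fk$, the large annihilator condition for each $\fk_i$ is built into the definition of $\mathbb T_{\fg,\fk}$, and finite length follows by induction on the $\fg$-length of $\bM$: a simple $\bV^{\boldsymbol{\lambda},\boldsymbol{\mu}}$ embeds in $\mathbb{S}_{\boldsymbol{\lambda}}(\bV)\otimes \mathbb{S}_{\boldsymbol{\mu}}(\bV_*)$, whose restriction is finite-length by (2); the induction step is automatic from the exactness of $\Res_\fk$.

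For (3), I would first prove $c(\Res_\fk \bV^{\boldsymbol{\lambda},\boldsymbol{\mu}}) = |\boldsymbol{\lambda}| + |\boldsymbol{\mu}|$ on simple modules. The inequality $\leq$ is immediate from the decomposition in (2), since every constituent of any summand $\bI_\fk^{\boldsymbol{\lambda}_1,\ldots,\boldsymbol{\mu}_r}$ has $c$-index at most $\sum(|\boldsymbol{\lambda}_i|+|\boldsymbol{\mu}_i|) = |\boldsymbol{\lambda}|+|\boldsymbol{\mu}|$. For the reverse direction, the socle of the restricted injective hull is
\[
\bigoplus_{(\boldsymbol{\lambda}_i),(\boldsymbol{\mu}_i)} N^{\boldsymbol{\lambda}}_{\boldsymbol{\lambda}_1,\ldots,\boldsymbol{\lambda}_r}\, N^{\boldsymbol{\mu}}_{\boldsymbol{\mu}_1,\ldots,\boldsymbol{\mu}_r}\, \bV^{\boldsymbol{\lambda}_1,\ldots,\boldsymbol{\lambda}_r,\boldsymbol{\mu}_1,\ldots,\boldsymbol{\mu}_r},
\]
whose every simple summand has $c$-index exactly $|\boldsymbol{\lambda}|+|\boldsymbol{\mu}|$. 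By (1), $\Res_\fk \bV^{\boldsymbol{\lambda},\boldsymbol{\mu}}$ is a nonzero finite-length submodule, so its socle is nonzero and sits inside the above direct sum, providing a constituent of maximal $c$-index; the identity $c(\bX) = c(\soc \bX)$ from (\ref{socidentity}) then forces equality. For arbitrary $\bM$, the result follows by induction on length from the exactness of $\Res_\fk$ together with the additivity of $c$ and $k$ over short exact sequences (both are the maximum of their invariant over composition factors).

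The main obstacle is the lower bound in (3): one must combine the nonvanishing of the socle of $\Res_\fk \bV^{\boldsymbol{\lambda},\boldsymbol{\mu}}$ (supplied by (1)) with the fact that socle summands of the restricted injective hull concentrate entirely in the top-$c$-index stratum. What makes the socle comparison succeed is that the $\fg$-contractions defining $\bV^{\boldsymbol{\lambda},\boldsymbol{\mu}}$ are compatible with the block decomposition — the pairing vanishes between $\bV_i$ and $(\bV_j)_*$ for $i\neq j$ — so the restricted $\fk$-socle is automatically detected inside the full $\fk$-socle of the injective hull described above.
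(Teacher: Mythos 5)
Your proof is correct and follows essentially the same route as the paper's: both parts (1) and (2) come from iterating the branching identity (\ref{Schur}) to obtain the explicit decomposition of $\Res_\fk(\mathbb{S}_{\boldsymbol{\lambda}}(\bV)\otimes\mathbb{S}_{\boldsymbol{\mu}}(\bV_*))$ as a direct sum of injective $\fk$-modules, and part (3) reduces via the socle invariant (\ref{socidentity}) and exactness of $\Res_\fk$ to a computation on that injective. Your socle argument for the lower bound on $c(\Res_\fk \bV^{\boldsymbol{\lambda},\boldsymbol{\mu}})$ is a reasonable unpacking of the paper's terse ``it is sufficient to prove the last assertion for $\bM=\mathbb{S}_{\boldsymbol{\lambda}}(\bV)\otimes\mathbb{S}_{\boldsymbol{\mu}}(\bV_*)$,'' though you should say ``$c$ and $k$ are the maximum of the invariant over composition factors'' rather than ``additivity,'' since neither is additive.
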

    \begin{proof} After applying identity (\ref{Schur}) $r$-times to $\mathbb{S}_{\boldsymbol{\lambda}}(\bV)\otimes \mathbb{S}_{{\boldsymbol{\mu}}}(\bV_*)$,   we get
      $$\Res_{\fk}(\mathbb{S}_{\boldsymbol{\lambda}}(\bV)\otimes \mathbb{S}_{{\boldsymbol{\mu}}}(\bV_*))\cong\bigoplus N^{\boldsymbol{\lambda}}_{\boldsymbol{\lambda}_1,\dots,\boldsymbol{\lambda}_r}N^{{\boldsymbol{\mu}}}_{{\boldsymbol{\mu}}_1,\dots,{\boldsymbol{\mu}}_r}\bI_{\fk}^{\boldsymbol{\lambda}_1,\dots,\boldsymbol{\lambda}_r,{\boldsymbol{\mu}}_1,\dots,{\boldsymbol{\mu}}_r}.$$
      This implies the first and the second assertions of the lemma. Identity  (\ref{socidentity}) implies that it
      is sufficient to prove the last assertion for $\bM=\mathbb{S}_{\boldsymbol{\lambda}}(\bV)\otimes \mathbb{S}_{{\boldsymbol{\mu}}}(\bV_*)$. Hence, this assertion follows from the above computation.
    \end{proof}

  \begin{conjecture}\label{lem:ext} Suppose $\operatorname{Ext}^k_{\mathbb T_{\fg,\fk}}(\bV^{\boldsymbol{\lambda}',{\boldsymbol{\mu}}'},\bV^{\boldsymbol{\lambda},{\boldsymbol{\mu}}})\neq 0$. Then $|\boldsymbol{\lambda}|-|\boldsymbol{\lambda}'|=|{\boldsymbol{\mu}}|-|{\boldsymbol{\mu}}'|=k$.
    \end{conjecture}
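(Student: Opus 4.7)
The plan is to establish the two equalities separately: the equality $|\boldsymbol{\lambda}|-|\boldsymbol{\lambda}'|=|\boldsymbol{\mu}|-|\boldsymbol{\mu}'|$ via a weight-grading argument, and the equality to $k$ via an induction on $k$ exploiting the socle filtration of $\bI^{\boldsymbol{\lambda},\boldsymbol{\mu}}$ from Theorem~\ref{thm:socinj}.

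\emph{First equality.} Every $\mathfrak{sl}(\infty)$-module in $\widetilde{Tens}_{\fg}$ admits a canonical $\mathbb{Z}$-grading by ``weight sum'', obtained by declaring $\deg v_i=+1$ and $\deg v_j^*=-1$ on the generating spaces $\bV$ and $\bV_*$; this grading is preserved by $\mathfrak{sl}(\infty)$ since each Chevalley generator $e_i=E_{i,i+1}$ and $f_i=E_{i+1,i}$ is of degree $0$. The simple module $\bV^{\boldsymbol{\lambda},\boldsymbol{\mu}}\subset\bV^{\otimes|\boldsymbol{\lambda}|}\otimes\bV_*^{\otimes|\boldsymbol{\mu}|}$ is concentrated in pure degree $|\boldsymbol{\lambda}|-|\boldsymbol{\mu}|$, and every morphism in $\widetilde{Tens}_{\fg}$ preserves the grading. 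Consequently $\widetilde{Tens}_{\fg}$ decomposes as a direct product of full subcategories of pure-degree objects, so $\operatorname{Ext}^k$ between simples in different degrees vanishes. This forces $|\boldsymbol{\lambda}|-|\boldsymbol{\mu}|=|\boldsymbol{\lambda}'|-|\boldsymbol{\mu}'|$, equivalently $|\boldsymbol{\lambda}|-|\boldsymbol{\lambda}'|=|\boldsymbol{\mu}|-|\boldsymbol{\mu}'|$.

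\emph{Equality to $k$.} I proceed by induction on $k$, with the base case $k=0$ immediate since $\operatorname{Ext}^0=\operatorname{Hom}$ between nonisomorphic simples is zero. For $k\geq 1$, apply the long exact sequence of $\operatorname{Ext}$ to
\[
0\to\bV^{\boldsymbol{\lambda},\boldsymbol{\mu}}\to\bI^{\boldsymbol{\lambda},\boldsymbol{\mu}}\to Q\to 0,
\]
where $Q:=\bI^{\boldsymbol{\lambda},\boldsymbol{\mu}}/\bV^{\boldsymbol{\lambda},\boldsymbol{\mu}}$. Since $\bI^{\boldsymbol{\lambda},\boldsymbol{\mu}}$ is injective, this yields the dimension shift
\[
\operatorname{Ext}^k_{\mathbb{T}_{\fg,\fk}}(\bV^{\boldsymbol{\lambda}',\boldsymbol{\mu}'},\bV^{\boldsymbol{\lambda},\boldsymbol{\mu}})\cong\operatorname{Ext}^{k-1}_{\mathbb{T}_{\fg,\fk}}(\bV^{\boldsymbol{\lambda}',\boldsymbol{\mu}'},Q).
\]
Filter $Q$ by its socle filtration and apply the long exact sequence stepwise. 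Theorem~\ref{thm:socinj} asserts that the constituents of $\overline{\operatorname{soc}}^t\bI^{\boldsymbol{\lambda},\boldsymbol{\mu}}$ are simples $\bV^{\boldsymbol{\lambda}'',\boldsymbol{\mu}''}$ with $|\boldsymbol{\lambda}|-|\boldsymbol{\lambda}''|=t$. Combined with the induction hypothesis that $\operatorname{Ext}^{k-1}(\bV^{\boldsymbol{\lambda}',\boldsymbol{\mu}'},\bV^{\boldsymbol{\lambda}'',\boldsymbol{\mu}''})\neq 0$ forces $|\boldsymbol{\lambda}''|-|\boldsymbol{\lambda}'|=k-1$, this gives $|\boldsymbol{\lambda}|-|\boldsymbol{\lambda}'|\geq k$.

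\emph{Main obstacle.} The crux is securing the matching upper bound $|\boldsymbol{\lambda}|-|\boldsymbol{\lambda}'|\leq k$, namely ruling out contributions to $\operatorname{Ext}^{k-1}(\bV^{\boldsymbol{\lambda}',\boldsymbol{\mu}'},Q)$ from socle layers of $Q$ at depth $t\geq 2$, which would give $|\boldsymbol{\lambda}|-|\boldsymbol{\lambda}'|=k-1+t>k$. I expect this will require a spectral-sequence argument for the socle filtration of $Q$, in which higher-depth contributions are shown to be killed by boundary maps coming from the lower layers. Equivalently, the conjecture asserts that $\mathbb{T}_{\fg,\fk}$ is Koszul with respect to the grading $\deg\bV^{\boldsymbol{\lambda},\boldsymbol{\mu}}:=|\boldsymbol{\lambda}|$, which would generalize the Koszulity of $\mathbb{T}_{\fg}$ established in \cite{DPS}. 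Establishing this Koszulity property is, in my view, the central technical challenge.
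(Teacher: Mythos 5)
This statement is explicitly labeled a \emph{conjecture} in the paper; the remark that follows it records that only the case $k=1$ is proved there (in the subsequent Proposition, which shares the label \texttt{lem:ext}), and that the general statement is equivalent to Koszulity of $\mathbb{T}_{\fg,\fk}$. So there is no paper proof of this statement to compare against. Your proposal is likewise incomplete --- as you candidly acknowledge at the end --- and it stops at exactly the point where the paper stops: the lower bound $|\boldsymbol{\lambda}|-|\boldsymbol{\lambda}'|\geq k$ (together with $|\boldsymbol{\lambda}|-|\boldsymbol{\lambda}'|=|\boldsymbol{\mu}|-|\boldsymbol{\mu}'|$) is accessible, and the upper bound $|\boldsymbol{\lambda}|-|\boldsymbol{\lambda}'|\leq k$ is the open Koszulity assertion. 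Your diagnosis of the obstacle is correct, but the proposal does not resolve it.

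Two comments on the parts you do argue. (i) The ``$\mathbb{Z}$-grading by weight sum'' is not something you get for free: objects of $\mathbb{T}_{\fg,\fk}$ are abstract $\mathfrak{sl}(\infty)$-modules, not subspaces of $T(\bV\oplus\bV_*)$, and $\mathfrak{sl}(\infty)$ has no central element that would furnish the grading intrinsically. The claim that the category splits into pure-degree blocks is true, but one should deduce it from Corollary~\ref{cor:JHmultiplicities}: every Jordan--H\"older constituent $\bV^{\boldsymbol{\lambda}'',\boldsymbol{\mu}''}$ of $\bI^{\boldsymbol{\lambda},\boldsymbol{\mu}}$ satisfies $|\boldsymbol{\lambda}|-|\boldsymbol{\lambda}''|=|\boldsymbol{\mu}|-|\boldsymbol{\mu}''|$, hence indecomposables have constant value of $|\boldsymbol{\lambda}|-|\boldsymbol{\mu}|$, and the splitting and the vanishing of all $\operatorname{Ext}$ across degrees follow. (ii) Your lower-bound induction starts at $k=0$ and then cites Theorem~\ref{thm:socinj} for the socle layers of $\bI^{\boldsymbol{\lambda},\boldsymbol{\mu}}$. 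But in the paper Theorem~\ref{thm:socinj} is derived from Corollary~\ref{cor:soc}, which is derived from the $k=1$ case of this very statement (Proposition~\ref{lem:ext}). As written, your inductive step from $k=0$ to $k=1$ is therefore circular. The fix is to take the $k=1$ case as established by the paper's Proposition~\ref{lem:ext} --- whose proof is genuinely different, using the restriction functor $\Res_\fk$, injectivity of $\Res_\fk\bigl(\mathbb{S}_{\boldsymbol{\lambda}}(\bV)\otimes\mathbb{S}_{\boldsymbol{\mu}}(\bV_*)\bigr)$ in $\mathbb{T}_\fk$, and the numerical estimate $c(\fg/\fk)=2$ --- and start your induction at $k=1$. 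With that change, the dimension shift plus Theorem~\ref{thm:socinj} plus the induction hypothesis gives $|\boldsymbol{\lambda}|-|\boldsymbol{\lambda}'|\geq k$ for $k\geq 2$ exactly as you describe.

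The remaining gap --- ruling out contributions from socle layers of $Q=\bI^{\boldsymbol{\lambda},\boldsymbol{\mu}}/\bV^{\boldsymbol{\lambda},\boldsymbol{\mu}}$ at depth $t\geq 2$ --- is precisely the Koszulity statement, and neither the paper nor your proposal supplies it.
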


\begin{rem} For $\fk=\fg$, this was proven in \cite{DPS}. Proving this conjecture would imply that the category $ \mathbb T_{\fg,\fk}$ is Koszul. We prove the case $k=1$. \end{rem}

    \begin{prop}\label{lem:ext} Suppose $\operatorname{Ext}^1_{\mathbb T_{\fg,\fk}}(\bV^{\boldsymbol{\lambda}',{\boldsymbol{\mu}}'},\bV^{\boldsymbol{\lambda},{\boldsymbol{\mu}}})\neq 0$. Then $|\boldsymbol{\lambda}|-|\boldsymbol{\lambda}'|=|{\boldsymbol{\mu}}|-|{\boldsymbol{\mu}}'|=1$.
    \end{prop}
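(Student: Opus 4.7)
The plan is to translate the $\Ext^1$ condition into a statement about the first socle layer of the injective hull $\bI^{\boldsymbol{\lambda},\boldsymbol{\mu}}$, use Corollary~\ref{cor:JHmultiplicities} to bound the possible size changes, and then rule out $k\geq 2$ by induction on the number $r$ of infinite blocks of $\fk$, using the functor $\mathrm{R}$. Applying $\Hom(\bV^{\boldsymbol{\lambda}',\boldsymbol{\mu}'},-)$ to $0\to\bV^{\boldsymbol{\lambda},\boldsymbol{\mu}}\to\bI^{\boldsymbol{\lambda},\boldsymbol{\mu}}\to\bI^{\boldsymbol{\lambda},\boldsymbol{\mu}}/\bV^{\boldsymbol{\lambda},\boldsymbol{\mu}}\to 0$ and using injectivity of $\bI^{\boldsymbol{\lambda},\boldsymbol{\mu}}$ identifies $\Ext^1_{\mathbb T_{\fg,\fk}}(\bV^{\boldsymbol{\lambda}',\boldsymbol{\mu}'},\bV^{\boldsymbol{\lambda},\boldsymbol{\mu}})\cong\Hom(\bV^{\boldsymbol{\lambda}',\boldsymbol{\mu}'},\bI^{\boldsymbol{\lambda},\boldsymbol{\mu}}/\bV^{\boldsymbol{\lambda},\boldsymbol{\mu}})$, since the left-hand $\Hom$ into $\bI^{\boldsymbol{\lambda},\boldsymbol{\mu}}$ vanishes unless $(\boldsymbol{\lambda}',\boldsymbol{\mu}')=(\boldsymbol{\lambda},\boldsymbol{\mu})$. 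So nonvanishing of $\Ext^1$ is equivalent to $\bV^{\boldsymbol{\lambda}',\boldsymbol{\mu}'}\subset\overline{\soc}^1\bI^{\boldsymbol{\lambda},\boldsymbol{\mu}}$. By Corollary~\ref{cor:JHmultiplicities}, $[\bI^{\boldsymbol{\lambda},\boldsymbol{\mu}}:\bV^{\boldsymbol{\lambda}',\boldsymbol{\mu}'}]\neq 0$ forces $|\boldsymbol{\lambda}|-|\boldsymbol{\lambda}'|=|\boldsymbol{\mu}|-|\boldsymbol{\mu}'|=\sum_i|\boldsymbol{\gamma}_i|=:k\geq 0$; the case $k=0$ puts $(\boldsymbol{\lambda}',\boldsymbol{\mu}')=(\boldsymbol{\lambda},\boldsymbol{\mu})$ in the simple socle and makes $\Ext^1$ vanish. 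It remains to exclude $k\geq 2$.

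I induct on $r$. The base case $r=1$ (i.e. $\fk=\fg$) is covered by the socle filtration (\ref{socle of S}) from \cite{PStyr}: $\bI^{\boldsymbol{\lambda},\boldsymbol{\mu}}$ is a summand of $\mathbb{S}_{\boldsymbol{\lambda}}(\bV)\otimes\mathbb{S}_{\boldsymbol{\mu}}(\bV_*)$ and the classes $\bV^{\boldsymbol{\lambda}',\boldsymbol{\mu}'}$ with $|\boldsymbol{\gamma}|=k$ occur exactly in layer $k$, so $\overline{\soc}^1$ contains only the $k=1$ classes (this is the $k=1$ instance of the \cite{DPS} result recalled in the Remark). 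For the inductive step, take a nonsplit extension $0\to\bV^{\boldsymbol{\lambda},\boldsymbol{\mu}}\to\bM\to\bV^{\boldsymbol{\lambda}',\boldsymbol{\mu}'}\to 0$ in $\mathbb T_{\fg,\fk}$ with $k\geq 2$ and apply the exact, length-preserving, simple-preserving functor $\mathrm{R}$ of Lemma~\ref{lem: R2} to get a length-two exact sequence $0\to\bV^{\boldsymbol{\lambda},\boldsymbol{\mu}}\to\mathrm{R}(\bM)\to\bV^{\boldsymbol{\lambda}',\boldsymbol{\mu}'}\to 0$ in $\mathbb T_{\fg',\fk'}$. If the restricted sequence is nonsplit, the inductive hypothesis immediately yields $k=1$, a contradiction. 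Otherwise $\mathrm{R}(\bM)\cong\bV^{\boldsymbol{\lambda},\boldsymbol{\mu}}\oplus\bN$ as $\fg'$-modules with $\bN\cong\bV^{\boldsymbol{\lambda}',\boldsymbol{\mu}'}$ and $\fk_r\bN=0$, and Lemma~\ref{aux1} gives $\bM=U(\fg)\mathrm{R}(\bM)=\bV^{\boldsymbol{\lambda},\boldsymbol{\mu}}+U(\fg)\bN$.

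Either $U(\fg)\bN\cap\bV^{\boldsymbol{\lambda},\boldsymbol{\mu}}=0$ and $\bM$ splits (contradiction), or $U(\fg)\bN=\bM$. To rule out the latter when $k\geq 2$, fix a weight vector $v\in\bN$ and examine the action on $v$ of cross-block generators $E_{ij}\in\fg$ with exactly one of $i,j$ in $\bV_r$: since $\fk_r v=0$, the identity $[E_{kl},E_{ij}]=\delta_{li}E_{kj}-\delta_{jk}E_{il}$ shows that $E_{ij}v\in\bV^{\boldsymbol{\lambda},\boldsymbol{\mu}}$ is annihilated by a finite-corank subalgebra of $\fk_r$, so $\bM$ does satisfy the $\fk_r$-large-annihilator condition. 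The combinatorial obstruction comes from weights: each cross-block application shifts the weight by a single root $\epsilon_i-\epsilon_j$, so reaching the socle $\bV^{\boldsymbol{\lambda},\boldsymbol{\mu}}$, whose diagrams differ from those of the top by $k\geq 2$ boxes in each of $\bV$ and $\bV_*$, requires two or more successive cross-block applications; a careful analysis of the $\fg'$-span of the intermediate vectors should produce a composition factor $\bV^{\boldsymbol{\lambda}'',\boldsymbol{\mu}''}$ with $|\boldsymbol{\lambda}'|<|\boldsymbol{\lambda}''|<|\boldsymbol{\lambda}|$ inside $\bM$, contradicting $l(\bM)=2$ from Lemma~\ref{lem: R2}. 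The main obstacle is making this weight-shift step rigorous---showing that $\fg$-generation of a length-two $\bM$ from a $\fk_r$-annihilated $\fg'$-simple subspace $\bN$ is impossible when $k\geq 2$; this is where the Littlewood--Richardson combinatorics of Corollary~\ref{cor:JHmultiplicities} meets the Lie-theoretic interaction between cross-block generators and the large-annihilator condition defining $\mathbb T_{\fg,\fk}$.
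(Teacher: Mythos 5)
Your first step---translating $\operatorname{Ext}^1\neq 0$ into membership of $\bV^{\boldsymbol{\lambda}',\boldsymbol{\mu}'}$ in $\overline{\soc}^1\bI^{\boldsymbol{\lambda},\boldsymbol{\mu}}$ and then invoking Corollary~\ref{cor:JHmultiplicities} to get $|\boldsymbol{\lambda}|-|\boldsymbol{\lambda}'|=|\boldsymbol{\mu}|-|\boldsymbol{\mu}'|=k\geq 1$---is correct, and the base case $r=1$ is fine. But the inductive step has a genuine gap, one you already flag yourself. After applying $\mathrm{R}$ and splitting off $\bN\cong\bV^{\boldsymbol{\lambda}',\boldsymbol{\mu}'}$ with $\fk_r\bN=0$, you need to rule out $U(\fg)\bN=\bM$ when $k\geq 2$, and the proposed weight-shift argument cannot work as stated: $\bM$ has length exactly two (by Lemma~\ref{lem: R2} and the hypothesis), so its only composition factors are $\bV^{\boldsymbol{\lambda},\boldsymbol{\mu}}$ and $\bV^{\boldsymbol{\lambda}',\boldsymbol{\mu}'}$; no ``intermediate'' factor $\bV^{\boldsymbol{\lambda}'',\boldsymbol{\mu}''}$ can appear, so the claimed contradiction with $l(\bM)=2$ never materializes. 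The fact that cross-block generators shift weights by a single root $\epsilon_i-\epsilon_j$ does not translate into a bound on the gap between Young-diagram sizes of the two composition factors, because the diagram labels index isotypic pieces, not weight strata, and a module generated by $\bN$ but containing $\bV^{\boldsymbol{\lambda},\boldsymbol{\mu}}$ as its socle is precisely the generic picture for a nonsplit extension.

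The paper avoids this entirely and does not induct on $r$. Instead it first replaces the target by the full tensor module $\mathbb{S}_{\boldsymbol{\lambda}}(\bV)\otimes\mathbb{S}_{\boldsymbol{\mu}}(\bV_*)$ (the general case follows by a long-exact-sequence argument afterward). For a nonsplit extension of $\bV^{\boldsymbol{\lambda}',\boldsymbol{\mu}'}$ by that module, one represents it by a cocycle $\varphi:\bV^{\boldsymbol{\lambda}',\boldsymbol{\mu}'}\otimes\fg\to\mathbb{S}_{\boldsymbol{\lambda}}(\bV)\otimes\mathbb{S}_{\boldsymbol{\mu}}(\bV_*)$. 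Lemma~\ref{lem:restriction} shows the restricted module is $\fk$-injective, so the cocycle can be normalized to vanish on $\bV^{\boldsymbol{\lambda}',\boldsymbol{\mu}'}\otimes\fk$, and the cocycle identity then forces $\varphi$ to descend to a nonzero $\fk$-homomorphism $\bV^{\boldsymbol{\lambda}',\boldsymbol{\mu}'}\otimes(\fg/\fk)\to\mathbb{S}_{\boldsymbol{\lambda}}(\bV)\otimes\mathbb{S}_{\boldsymbol{\mu}}(\bV_*)$. Comparing the invariant $c$ of source and target, using $c(\fg/\fk)=2$ and $c(\bV^{\boldsymbol{\lambda}',\boldsymbol{\mu}'})=|\boldsymbol{\lambda}'|+|\boldsymbol{\mu}'|$ against $c=|\boldsymbol{\lambda}|+|\boldsymbol{\mu}|$ for the socle of the target, gives $2k\leq 2$ directly. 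If you want to keep a functorial flavor along the lines of $\mathrm{R}$, the key missing input is exactly this degree-type invariant $c$ from Lemma~\ref{lem:restriction} and identities (\ref{socidentity})--(\ref{tensidentity}), which is what turns the qualitative ``cross-block action'' observation into a quantitative bound.
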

    \begin{proof}  Since $\bV^{\boldsymbol{\lambda}',{\boldsymbol{\mu}}'}$ is isomorphic to a simple constituent of $\bI^{\boldsymbol{\lambda},{\boldsymbol{\mu}}}$, we know by Corollary \ref{cor:JHmultiplicities} that
      $|\boldsymbol{\lambda}|-|\boldsymbol{\lambda}'|=|{\boldsymbol{\mu}}|-|{\boldsymbol{\mu}}'|=s\geq 1$. It remains to show that $s=1$. We will do this in two steps.

      First, we show that
      $\operatorname{Ext}^1_{\mathbb T_{\fg,\fk}}(\bV^{\boldsymbol{\lambda}',{\boldsymbol{\mu}}'},\mathbb{S}_{\boldsymbol{\lambda}}(\bV)\otimes \mathbb{S}_{{\boldsymbol{\mu}}}(\bV_*))\neq 0$ implies $s=1$. Consider
      a nonsplit short exact sequence in $\mathbb T_{\fg,\fk}$
      \begin{equation}\label{sequence Ext} 0\to \mathbb{S}_{\boldsymbol{\lambda}}(\bV)\otimes \mathbb{S}_{{\boldsymbol{\mu}}}(\bV_*)\to \bM\to \bV^{\boldsymbol{\lambda}',{\boldsymbol{\mu}}'}\to 0.\end{equation}
Let $\varphi:\bV^{\boldsymbol{\lambda}',{\boldsymbol{\mu}}'}\otimes\fg\to \mathbb{S}_{\boldsymbol{\lambda}}(\bV)\otimes \mathbb{S}_{{\boldsymbol{\mu}}}(\bV_*)$ be a cocyle which defines this extension.   By Lemma \ref{lem:restriction}, the module $\Res_{\fk}( \mathbb{S}_{\boldsymbol{\lambda}}(\bV)\otimes \mathbb{S}_{{\boldsymbol{\mu}}}(\bV_*))$ is injective in $\mathbb T_{\fk}$, and therefore the sequence (\ref{sequence Ext}) splits over $\fk$. Without loss of generality we may assume that $\varphi(\bV^{\boldsymbol{\lambda}',{\boldsymbol{\mu}}'}\otimes\fk)=0$. Then the cocycle condition implies that
      $\varphi:\bV^{\boldsymbol{\lambda}',{\boldsymbol{\mu}}'}\otimes(\fg/\fk)\to \mathbb{S}_{\boldsymbol{\lambda}}(\bV)\otimes \mathbb{S}_{{\boldsymbol{\mu}}}(\bV_*)$ is a nonzero homomorphism of $\fk$-modules.  Consequently, the image of $\varphi$
      contains a simple submodule in the socle of $\Res_{\fk}( \mathbb{S}_{\boldsymbol{\lambda}}(\bV)\otimes \mathbb{S}_{{\boldsymbol{\mu}}}(\bV_*))$. By Lemma \ref{lem:restriction}, we have
     $$\soc\Res_{\fk}(\mathbb{S}_{\boldsymbol{\lambda}}(\bV)\otimes \mathbb{S}_{{\boldsymbol{\mu}}}(\bV_*))=\bigoplus N^{\boldsymbol{\lambda}}_{\boldsymbol{\lambda}_1,\dots,\boldsymbol{\lambda}_r}N^{{\boldsymbol{\mu}}}_{{\boldsymbol{\mu}}_1,\dots,{\boldsymbol{\mu}}_r}V^{\boldsymbol{\lambda}_1,\dots,\boldsymbol{\lambda}_r,{\boldsymbol{\mu}}_1,\dots,{\boldsymbol{\mu}}_r}.$$

     In particular,
     $$c(V^{\boldsymbol{\lambda}_1,\dots,\boldsymbol{\lambda}_r,{\boldsymbol{\mu}}_1,\dots,{\boldsymbol{\mu}}_r})=|\boldsymbol{\lambda}_1|+\dots+|\boldsymbol{\lambda}_r|+|{\boldsymbol{\mu}}_1|+\dots+|{\boldsymbol{\mu}}_r|=|\boldsymbol{\lambda}|+|{\boldsymbol{\mu}}|$$
 for every simple submodule $V^{\boldsymbol{\lambda}_1,\dots,\boldsymbol{\lambda}_r,{\boldsymbol{\mu}}_1,\dots,{\boldsymbol{\mu}}_r}$ of $\soc\Res_{\fk}(\mathbb{S}_{\boldsymbol{\lambda}}(\bV)\otimes \mathbb{S}_{{\boldsymbol{\mu}}}(\bV_*))$.
Therefore, $$c(\bV^{\boldsymbol{\lambda}',{\boldsymbol{\mu}}'}\otimes (\fg/\fk))\geq |\boldsymbol{\lambda}|+|{\boldsymbol{\mu}}|,$$ and so (\ref{tensidentity}) implies
      $$c(\bV^{\boldsymbol{\lambda}',{\boldsymbol{\mu}}'})+c(\fg/\fk)\geq |\boldsymbol{\lambda}|+|{\boldsymbol{\mu}}|.$$
      Since $\fg/\fk\cong\bigoplus_{i\neq j}( \bV_i\otimes(\bV_j)_*)$,
 we have
      $$c(\bV^{\boldsymbol{\lambda}',{\boldsymbol{\mu}}'})=|\boldsymbol{\lambda}'|+|{\boldsymbol{\mu}}'|,\quad c(\fg/\fk)=2,$$
 and thus $|\boldsymbol{\lambda}|-|\boldsymbol{\lambda}'|+|\boldsymbol{\mu}|-|\boldsymbol{\mu}'|= 2s \leq 2$.
This yields $s=1$.

    Assume now to the contrary that $s\geq 2$. Set $$\bX=(\mathbb{S}_{\boldsymbol{\lambda}}(\bV)\otimes \mathbb{S}_{{\boldsymbol{\mu}}}(\bV_*))/\bV^{\boldsymbol{\lambda},{\boldsymbol{\mu}}}$$ and
      consider the long exact sequence of $\Ext$
      $$\dots\to\Hom_{\fg}(\bV^{\boldsymbol{\lambda}',{\boldsymbol{\mu}}'},\bX)\to \Ext^{1}_{\mathbb T_{\fg,\fk}}(\bV^{\boldsymbol{\lambda}',{\boldsymbol{\mu}}'},\bV^{\boldsymbol{\lambda},{\boldsymbol{\mu}}})\to\Ext^{1}_{\mathbb T_{\fg,\fk}}(\bV^{\boldsymbol{\lambda}',{\boldsymbol{\mu}}'}, \mathbb{S}_{\boldsymbol{\lambda}}(\bV)\otimes \mathbb{S}_{{\boldsymbol{\mu}}}(\bV_*))\to\dots.$$
Since $s\geq 2$, $\bV^{\boldsymbol{\lambda}',{\boldsymbol{\mu}}'}$ is not isomorphic to a submodule of $\soc \bX$, so   $\Hom_{\fg}(\bV^{\boldsymbol{\lambda}',{\boldsymbol{\mu}}'},\bX)=0$,
and by the already considered case when $s=1$, we have
     $$\Ext^{1}_{\mathbb T_{\fg,\fk}}(\bV^{\boldsymbol{\lambda}',{\boldsymbol{\mu}}'}, \mathbb{S}_{\boldsymbol{\lambda}}(\bV)\otimes \mathbb{S}_{{\boldsymbol{\mu}}}(\bV_*))=0.$$
      Hence, $\operatorname{Ext}^1_{\mathbb T_{\fg,\fk}}(\bV^{\boldsymbol{\lambda}',{\boldsymbol{\mu}}'},\bV^{\boldsymbol{\lambda},{\boldsymbol{\mu}}})= 0$, which is a contradiction.
    \end{proof}
    \begin{cor}\label{cor:soc} Suppose that $\bM\in{\mathbb T_{\fg,\fk}} $ has a simple socle $\bV^{\boldsymbol{\lambda},{\boldsymbol{\mu}}}$ and the multiplicity of $\bV^{\boldsymbol{\lambda}',{\boldsymbol{\mu}}'}$ in
      $\overline{\soc}^k \bM$ is nonzero. Then $|\boldsymbol{\lambda}|-|\boldsymbol{\lambda}'|=|{\boldsymbol{\mu}}|-|{\boldsymbol{\mu}}'|=k$.
    \end{cor}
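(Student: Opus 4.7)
My plan is to argue by induction on $k$. The base case $k=0$ is immediate, since by hypothesis $\overline{\soc}^{0}\bM=\soc\bM=\bV^{\boldsymbol{\lambda},{\boldsymbol{\mu}}}$, which forces $\boldsymbol{\lambda}'=\boldsymbol{\lambda}$ and ${\boldsymbol{\mu}}'={\boldsymbol{\mu}}$ and makes both differences equal $0$.

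For the inductive step with $k\ge 1$, I fix an embedding $\iota\colon\bV^{\boldsymbol{\lambda}',{\boldsymbol{\mu}}'}\hookrightarrow\overline{\soc}^{k}\bM$ and let $\bM_{1}\subseteq \soc^{k}\bM$ be the preimage of $\iota(\bV^{\boldsymbol{\lambda}',{\boldsymbol{\mu}}'})$ under the canonical projection $\soc^{k}\bM\to\overline{\soc}^{k}\bM$. Since $\soc^{k-1}\bM\subseteq\bM_{1}$, quotienting by $\soc^{k-2}\bM$ (with the convention $\soc^{-1}\bM:=0$) yields a short exact sequence
$$0\to \overline{\soc}^{k-1}\bM\to \bM_{1}/\soc^{k-2}\bM\to \bV^{\boldsymbol{\lambda}',{\boldsymbol{\mu}}'}\to 0.$$

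The central step, which I anticipate is the main subtlety, is to verify that this extension does not split. If it did, the complementary simple summand $T\cong\bV^{\boldsymbol{\lambda}',{\boldsymbol{\mu}}'}$ would be a simple submodule of $\bM/\soc^{k-2}\bM$ and would therefore lie in $\soc(\bM/\soc^{k-2}\bM)=\overline{\soc}^{k-1}\bM$; this contradicts the fact that, by construction, $T$ intersects $\overline{\soc}^{k-1}\bM$ trivially inside $\bM_{1}/\soc^{k-2}\bM$. Once non-splitting is established, the semisimplicity of $\overline{\soc}^{k-1}\bM=\bigoplus_{j}S_{j}$ means the nonvanishing extension class has a nonzero component in $\Ext^{1}_{\mathbb T_{\fg,\fk}}(\bV^{\boldsymbol{\lambda}',{\boldsymbol{\mu}}'},\bV^{\boldsymbol{\lambda}'',{\boldsymbol{\mu}}''})$ for some simple summand $S_{j}=\bV^{\boldsymbol{\lambda}'',{\boldsymbol{\mu}}''}$.

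To finish, Proposition~\ref{lem:ext} applied to this nonzero $\Ext^{1}$ gives $|\boldsymbol{\lambda}''|-|\boldsymbol{\lambda}'|=|{\boldsymbol{\mu}}''|-|{\boldsymbol{\mu}}'|=1$, while the inductive hypothesis applied to $\bV^{\boldsymbol{\lambda}'',{\boldsymbol{\mu}}''}\subseteq \overline{\soc}^{k-1}\bM$ (valid since $\bM$ still has simple socle $\bV^{\boldsymbol{\lambda},{\boldsymbol{\mu}}}$) yields $|\boldsymbol{\lambda}|-|\boldsymbol{\lambda}''|=|{\boldsymbol{\mu}}|-|{\boldsymbol{\mu}}''|=k-1$. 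Adding the two pairs of identities produces the desired $|\boldsymbol{\lambda}|-|\boldsymbol{\lambda}'|=|{\boldsymbol{\mu}}|-|{\boldsymbol{\mu}}'|=k$, completing the induction. The only nonroutine input is the non-splitting of the truncated sequence above; Proposition~\ref{lem:ext} and the induction hypothesis then do all remaining work.
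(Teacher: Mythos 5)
Your proof is correct, and it takes a genuinely different route from the paper's. The paper inducts on $|\boldsymbol{\lambda}|+|\boldsymbol{\mu}|$: it applies Proposition~\ref{lem:ext} once to identify the simple constituents of $\overline{\soc}^1\bM$, then embeds $\bM/\soc\bM$ into a direct sum of indecomposable injectives $\bI^{\boldsymbol{\gamma},\boldsymbol{\nu}}$ (whose socles have strictly smaller degree $|\boldsymbol{\gamma}|+|\boldsymbol{\nu}|$) and invokes the inductive hypothesis for those injective hulls. You instead induct on the socle layer $k$, staying inside the single module $\bM$, truncating at level $k-2$ to isolate a nonsplit extension of $\bV^{\boldsymbol{\lambda}',\boldsymbol{\mu}'}$ by the semisimple layer $\overline{\soc}^{k-1}\bM$, and then applying Proposition~\ref{lem:ext} to a nonzero component of the resulting $\Ext^1$ class. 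Both proofs rest on Proposition~\ref{lem:ext} as the essential input, but your version is more self-contained, since it never uses the existence of injective hulls in $\mathbb T_{\fg,\fk}$ (Lemma~\ref{lem: R1} and Corollary~\ref{I finte length}); the paper's version is shorter to state given that those hulls and the parametrization $\bI^{\boldsymbol{\gamma},\boldsymbol{\nu}}$ are already in place. Your identification of the non-splitting of the truncated sequence as the crux is accurate, and your argument for it (a complementary simple summand $T$ would have to lie in $\soc(\bM/\soc^{k-2}\bM)=\overline{\soc}^{k-1}\bM$, contradicting $T\cap\overline{\soc}^{k-1}\bM=0$) is exactly right; the analogous non-splitting is implicit in the paper's appeal to Proposition~\ref{lem:ext}.
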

\begin{proof}
This follows by induction on $|\boldsymbol{\lambda}|+|{\boldsymbol{\mu}}|$. By Proposition~\ref{lem:ext}, the module $\bM/\soc\bM$ embeds into a direct sum of injective indecomposable modules $\bigoplus \bI^{{\boldsymbol{\gamma}},{\boldsymbol{\nu}}}$ with simple socles  $\bV^{{\boldsymbol{\gamma}},{\boldsymbol{\nu}}}$ satisfying  $|\boldsymbol{\lambda}|-|{\boldsymbol{\gamma}}|=|{\boldsymbol{\mu}}|-|{\boldsymbol{\nu}}|=1$, and by induction each $\bI^{{\boldsymbol{\gamma}},{\boldsymbol{\nu}}}$ satisfies our claim. If the multiplicity of  $\bV^{\boldsymbol{\lambda}',{\boldsymbol{\mu}}'}$ is nonzero in  $\overline{\soc}^k \bM=\overline{\soc}^{k-1}( \bM/\soc\bM)\subset \overline{\soc}^{k-1}(\bigoplus \bI^{{\boldsymbol{\gamma}},{\boldsymbol{\nu}}})$, then  $|{\boldsymbol{\gamma}}|-|\boldsymbol{\lambda}'|=|{\boldsymbol{\nu}}|-|{\boldsymbol{\mu}}'|=k-1$. The result follows.
\end{proof}

Finally, by combining Corollary~\ref{cor:JHmultiplicities} and Corollary~\ref{cor:soc} we obtain the following.

    \begin{thm}\label{thm:socinj} The layers of the socle filtration of an indecomposable injective $\bI^{\boldsymbol{\lambda},{\boldsymbol{\mu}}}$ in $\mathbb T_{\fg,\fk}$ satisfy
      $$\overline{\soc}^k \bI^{\boldsymbol{\lambda},{\boldsymbol{\mu}}}\cong
\bigoplus_{\boldsymbol{\lambda'},\boldsymbol{\mu'}}
\bigoplus_{|{\boldsymbol{\gamma}}_1|+\dots+|{\boldsymbol{\gamma}}_r|=k}
      N^{\boldsymbol{\lambda}}_{{\boldsymbol{\gamma}}_1,\dots,{\boldsymbol{\gamma}}_r,\boldsymbol{\lambda}'}N^{{\boldsymbol{\mu}}}_{{\boldsymbol{\gamma}}_1,\dots,{\boldsymbol{\gamma}}_r,{\boldsymbol{\mu}}'}\bV^{\boldsymbol{\lambda}',{\boldsymbol{\mu}}'},$$
where $r$ is the number of (infinite) blocks in $\fk$ (see (\ref{decomp V})).
      \end{thm}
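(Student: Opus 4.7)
The plan is to deduce the theorem by combining the two main results proved just above: the Jordan--Hölder multiplicity formula of Corollary~\ref{cor:JHmultiplicities} gives the total composition factors of $\bI^{\boldsymbol{\lambda},\boldsymbol{\mu}}$, and Corollary~\ref{cor:soc} determines exactly which socle layer each composition factor lives in. The key observation that makes this combination effective is that the Littlewood--Richardson coefficients $N^{\boldsymbol{\lambda}}_{\boldsymbol{\gamma}_1,\dots,\boldsymbol{\gamma}_r,\boldsymbol{\lambda}'}$ vanish unless $|\boldsymbol{\lambda}|=|\boldsymbol{\gamma}_1|+\dots+|\boldsymbol{\gamma}_r|+|\boldsymbol{\lambda}'|$, and analogously for the $\boldsymbol{\mu}$-factor; so every nonzero summand in Corollary~\ref{cor:JHmultiplicities} automatically satisfies $|\boldsymbol{\lambda}|-|\boldsymbol{\lambda}'|=|\boldsymbol{\mu}|-|\boldsymbol{\mu}'|=|\boldsymbol{\gamma}_1|+\dots+|\boldsymbol{\gamma}_r|$.

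First I would partition the sum in Corollary~\ref{cor:JHmultiplicities} according to the common value $k=|\boldsymbol{\gamma}_1|+\dots+|\boldsymbol{\gamma}_r|$, obtaining
\[
[\bI^{\boldsymbol{\lambda},\boldsymbol{\mu}}:\bV^{\boldsymbol{\lambda}',\boldsymbol{\mu}'}]=\sum_{k\geq 0}\Bigl(\,\sum_{|\boldsymbol{\gamma}_1|+\dots+|\boldsymbol{\gamma}_r|=k} N^{\boldsymbol{\lambda}}_{\boldsymbol{\gamma}_1,\dots,\boldsymbol{\gamma}_r,\boldsymbol{\lambda}'}N^{\boldsymbol{\mu}}_{\boldsymbol{\gamma}_1,\dots,\boldsymbol{\gamma}_r,\boldsymbol{\mu}'}\Bigr),
\]
where by the observation above only the single value $k=|\boldsymbol{\lambda}|-|\boldsymbol{\lambda}'|=|\boldsymbol{\mu}|-|\boldsymbol{\mu}'|$ contributes nonzero terms.

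Next, since $\bI^{\boldsymbol{\lambda},\boldsymbol{\mu}}$ has simple socle $\bV^{\boldsymbol{\lambda},\boldsymbol{\mu}}$, Corollary~\ref{cor:soc} applies and forces any occurrence of $\bV^{\boldsymbol{\lambda}',\boldsymbol{\mu}'}$ in $\overline{\soc}^k\bI^{\boldsymbol{\lambda},\boldsymbol{\mu}}$ to satisfy $|\boldsymbol{\lambda}|-|\boldsymbol{\lambda}'|=|\boldsymbol{\mu}|-|\boldsymbol{\mu}'|=k$. In particular, each composition factor $\bV^{\boldsymbol{\lambda}',\boldsymbol{\mu}'}$ of $\bI^{\boldsymbol{\lambda},\boldsymbol{\mu}}$ is concentrated in a single socle layer, namely the one indexed by $k=|\boldsymbol{\lambda}|-|\boldsymbol{\lambda}'|$. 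Since the composition multiplicity in the whole module equals the sum over all layers of the multiplicity in each layer, and all but one layer contribute zero, the multiplicity of $\bV^{\boldsymbol{\lambda}',\boldsymbol{\mu}'}$ in $\overline{\soc}^k\bI^{\boldsymbol{\lambda},\boldsymbol{\mu}}$ coincides precisely with the term
\[
\sum_{|\boldsymbol{\gamma}_1|+\dots+|\boldsymbol{\gamma}_r|=k} N^{\boldsymbol{\lambda}}_{\boldsymbol{\gamma}_1,\dots,\boldsymbol{\gamma}_r,\boldsymbol{\lambda}'}N^{\boldsymbol{\mu}}_{\boldsymbol{\gamma}_1,\dots,\boldsymbol{\gamma}_r,\boldsymbol{\mu}'}
\]
extracted from the previous display. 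Summing over all $\boldsymbol{\lambda}',\boldsymbol{\mu}'$ yields the stated formula, and since each $\overline{\soc}^k\bI^{\boldsymbol{\lambda},\boldsymbol{\mu}}$ is semisimple by definition, the direct-sum decomposition is automatic.

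There is no substantial obstacle left here: all the real work is carried by Corollary~\ref{cor:JHmultiplicities} (which rests on the functor $\mathrm{R}$ and the change-of-basis matrix $A$) and by Corollary~\ref{cor:soc} (whose proof in turn uses Proposition~\ref{lem:ext} and the injectivity of $\Res_\fk(\mathbb{S}_{\boldsymbol{\lambda}}(\bV)\otimes \mathbb{S}_{\boldsymbol{\mu}}(\bV_*))$ over $\fk$). The only thing to double-check is the degree bookkeeping in the Littlewood--Richardson factors, which is purely combinatorial and causes no difficulty.
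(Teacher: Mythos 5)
Your proposal is correct and follows exactly the same route as the paper, which literally states that Theorem~\ref{thm:socinj} is obtained "by combining Corollary~\ref{cor:JHmultiplicities} and Corollary~\ref{cor:soc}" without spelling out the degree bookkeeping. Your write-up simply supplies the implicit details — the vanishing of the Littlewood--Richardson coefficients off the diagonal $|\boldsymbol{\lambda}|-|\boldsymbol{\lambda}'|=|\boldsymbol{\mu}|-|\boldsymbol{\mu}'|=|\boldsymbol{\gamma}_1|+\dots+|\boldsymbol{\gamma}_r|$ and the fact that each composition factor lands in a single socle layer — and does so correctly.
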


\begin{example}\label{example added}
Consider an injective hull of the adjoint representation of $\mathfrak{sl}(\infty)$ in the category $\mathbb T_{\fg,\fk}$ in the case that $\fk$ has $k$ (infinite) blocks. Then ${\boldsymbol{\lambda}}$ and ${\boldsymbol{\mu}}$ each consist of one box, and $\soc V^{{\boldsymbol{\lambda}},{\boldsymbol{\mu}}}=\mathfrak{sl}(\infty)$ and $\overline{\soc}^1 V^{{\boldsymbol{\lambda}},{\boldsymbol{\mu}}}=\mathbb C^k$, the trivial representation of dimension $k$.
The self-similarity effect mentioned in the introduction amounts here to the increase of the dimension of $\overline{\soc}^1$ by $1$ when the number of blocks of $\fk$ increases by $1$.
\end{example}

\begin{rem}
Let's observe that the category $\mathbb T_{\fg,\fk}$ is another example of an ordered tensor category as defined in \cite{CP1}. Indeed, the set $I$ in the notation of \cite{CP1} can be chosen as the set of pairs of Young diagrams $({{\boldsymbol{\lambda}},{\boldsymbol{\mu}}})$, and then the object $X_i$ for $i=({{\boldsymbol{\lambda}},{\boldsymbol{\mu}}})$ equals  $\bI^{\boldsymbol{\lambda},{\boldsymbol{\mu}}}$.
\end{rem}


\section{$\mathfrak{sl}(\infty)$-modules arising from category $\mathcal O$ for $\mathfrak{gl}(m|n)$}\label{super}

For the remainder of this paper, we let $\fk=\fk_1\oplus\fk_2$ be the commutator subalgebra of the Lie algebra preserving a fixed decomposition $\bV=\bV_1\oplus\bV_2$ such that both $\fk_1$ and $\fk_2$ are isomorphic to $\mathfrak{sl}(\infty)$ ($r=2$ in (\ref{decomp V})).

\subsection{Category $\mathcal{O}$ for the Lie superalgebra $\mathfrak{gl}(m|n)$}\label{cat O}

Let $\mathcal{O}_{m|n}$ denote the category of $\mathbb{Z}_{2}$-graded
modules over $\mathfrak{gl}(m|n)$ which when restricted to $\mathfrak{gl}(m|n)_{\bar{0}}$,
belong to the BGG category $\mathcal{O}_{\mathfrak{gl}(m|n)_{\bar{0}}}$  \cite[Section 8.2.3]{M}.
This category depends only on a choice of simple roots for the Lie
algebra $\mathfrak{gl}(m|n)_{\bar{0}}$, and not for all of $\mathfrak{gl}(m|n)$. We denote by $\mathcal{O}_{m|n}^{\mathbb{Z}}$ the Serre subcategory of $\mathcal{O}_{m|n}$
consisting of modules with integral weights. Any simple object in $\mathcal{O}_{m|n}^{\mathbb{Z}}$ is isomorphic
to $L\left(\lambda\right)$ (the unique simple quotient of the Verma
module $M\left(\lambda\right)$) for some $\lambda\in \Phi$, where $\Phi$ denotes the set of integral weights.
Any object in the category $\mathcal{O}_{m|n}^{\mathbb{Z}}$
has finite length.

We denote by $\mathcal{F}_{m|n}^{\mathbb{Z}}$ the Serre subcategory of $\mathcal{O}_{m|n}^{\mathbb{Z}}$
consisting of finite-dimensional modules. Let $\Pi:\mathcal{O}_{m|n}^{\mathbb{Z}}\rightarrow\mathcal{O}_{m|n}^{\mathbb{Z}}$ be the parity reversing functor.
We define the {\em reduced Grothendieck group} $K_{m|n}$ (respectively, $J_{m|n}$) to be the quotient of the Grothendieck group of  $\mathcal{O}_{m|n}^{\mathbb{Z}}$  (respectively, $\mathcal{F}_{m|n}^{\mathbb{Z}}$) by the relation $[\Pi M]=-[M]$.
The elements $\left[L\left(\lambda\right)\right]$ with $\lambda\in \Phi$ (respectively, $\lambda\in \Phi^+$) form a basis for $K_{m|n}$ (respectively, $J_{m|n}$).

We introduce an action of $\mathfrak{sl}(\infty)$ on $\bK_{m|n}:= K_{m|n}\otimes_{\mathbb Z}\mathbb C$ following Brundan \cite{B}.
Our starting point is to define the translation functors $\mathrm{E}_i$ and $\mathrm{F}_i$ on the category $\mathcal{O}_{m|n}^{\mathbb{Z}}$.
Consider the invariant form $\operatorname{str}(XY)$ on $\mathfrak{gl}\left(m|n\right)$ and let $X_j$, $Y_j$ be a pair of $\mathbb Z_2$-homogeneous dual bases of $\mathfrak{gl}(m|n)$ with respect to this form.
Then for two $\mathfrak{gl}\left(m|n\right)$-modules $V$ and $W$ we define the operator $$\Omega:V\otimes W\to V\otimes W,$$
$$\Omega(v\otimes w):=\sum_j(-1)^{p(X_j)(p(v)+1)}X_j v\otimes Y_j w,$$
where $p(X_j)$ denotes the parity of the $\mathbb Z_2$-homogeneous element $X_j$.
It is easy to check that $\Omega\in\operatorname{End}_{\mathfrak{gl}\left(m|n\right)}(V\otimes W)$.
Let $U$ and $U^*$ denote the natural and conatural $\mathfrak{gl}\left(m|n\right)$-modules. For every $M\in\mathcal{O}_{m|n}^{\mathbb{Z}}$ we let $\mathrm{E}_i(M)$
(respectively, $\mathrm{F}_i(M)$) be the
generalized eigenspace of $\Omega$ in $M\otimes U^*$ (respectively, $M\otimes U$) with eigenvalue $i$.
Then, as it follows from \cite{BLW}, the functor $\cdot\otimes U^*$ (respectively, $\cdot\otimes U$) decomposes into the direct sum of functors $\oplus_{i\in\mathbb Z}\mathrm{E}_i(\cdot)$ (respectively,
$\oplus_{i\in\mathbb Z}\mathrm{F}_i(\cdot)$). Moreover, the functors $\mathrm{E}_i$ and $\mathrm{F}_i$ are mutually adjoint functors on $\mathcal{O}_{m|n}^{\mathbb{Z}}$.
 We will denote by $e_i$ and $f_i$ the linear operators which the functors $\mathrm{E}_i$ and
$\mathrm{F}_i$ induce on $\bK_{m|n}$.

If we identify $e_i$ and $f_i$ with the Chevalley generators $E_{i,i+1}$ and $F_{i+1,i}$ of  $\mathfrak{sl}(\infty)$, then $\bK_{m|n}$ inherits the natural structure of a
$\mathfrak{sl}(\infty)$-module. This follows from \cite{B,BLW}. Another proof can be obtained by using Theorem 3.11 of \cite{CS} and (\ref{K embeds}) below.  Weight spaces with respect to the diagonal subalgebra $\mathfrak h \subset \mathfrak{sl}(\infty)$ correspond to the complexified reduced Grothendieck groups of the blocks of $\mathcal{O}_{m|n}^{\mathbb{Z}}$.

Let  $\bJ_{m|n}:= J_{m|n}\otimes_{\mathbb Z}\mathbb C$, and let $\bT_{m|n}\subset \bK_{m|n}$ denote the subspace generated by the classes $[M(\lambda)]$ of all Verma modules $M(\lambda)$ for $\lambda\in\Phi$. Let furthermore $\bW_{m|n}\subset\bJ_{m|n}$ denote the subspace generated by the classes $[K(\lambda)]$ of all Kac modules $K(\lambda)$ for $\lambda\in\Phi^+$  (for the definition of a Kac module see for example \cite{B}).
Then $\bT_{m|n}$ is an  $\mathfrak{sl}\left(\infty\right)$-submodule isomorphic to $\bV^{\otimes m}\otimes \bV^{\otimes n}_*$ and $\bW_{m|n}$ is a submodule of $\bT_{m|n}$ isomorphic to $\Lambda^{m}\bV\otimes\Lambda^{n}\bV_*$ \cite{B}. To see this, let $\{v_i\}_{i\in\mathbb Z}$
and $\{w_i\}_{i\in\mathbb Z}$ be the standard dual bases in $\bV$ and $\bV_*$ (i.e. $\mathfrak h$-eigenbases in $\bV$ and $\bV_*$), and let $\bar\lambda:=\lambda+(m-1,\dots,1,0|0,-1,\dots,1-n)$,
$$m_\lambda:=v_{\bar\lambda_1}\otimes \cdots\otimes v_{\bar\lambda_m}\otimes v^*_{-\bar\lambda_{m+1}}\otimes\cdots\otimes v^*_{-\bar\lambda_{m+n}}.$$
The map $[M(\lambda)]\mapsto m_{\lambda}$ establishes an isomorphism $\bT_{m|n}\cong \bV^{\otimes m}\otimes \bV^{\otimes n}_*$, and
restricts to an isomorphism \begin{align*}\bW_{m|n}&\cong \Lambda^{m}\bV\otimes\Lambda^{n}\bV_{*}\\
 [K(\lambda)]&\mapsto k_{\lambda}:=v_{\bar\lambda_1}\wedge \cdots\wedge v_{\bar\lambda_m}\otimes v^*_{-\bar\lambda_{m+1}}\wedge\cdots\wedge v^*_{-\bar\lambda_{m+n}}.\end{align*}

\begin{lem}\label{lem-annihilator} The $\mathfrak{sl}(\infty)$-module $\bK_{m|n}$  satisfies the large annihilator condition as a module over $\fk_1$ and $\fk_2$, that is,  $\Gamma_{\fg,\fk}(\bK_{m|n})=\bK_{m|n}$.
\end{lem}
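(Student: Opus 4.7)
Since any $v\in\bK_{m|n}$ is a finite $\mathbb C$-linear combination of basis classes $[L(\lambda)]$, it suffices to prove the large annihilator condition for each $[L(\lambda)]$; a common finite-dimensional subalgebra $\fs_j\subset\fk_j$ then handles any given $v$.

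The starting observation is that only finitely many Chevalley generators $e_i,f_i\in\mathfrak{sl}(\infty)$ act non-trivially on $[L(\lambda)]$. Indeed, $e_i\cdot[L(\lambda)]=[\mathrm{E}_i L(\lambda)]$ is the class of the generalized $\Omega$-eigenspace of $L(\lambda)\otimes U^*$ with eigenvalue $i$; since $U^*$ is finite-dimensional and $L(\lambda)$ has finite length in $\mathcal{O}_{m|n}^{\mathbb Z}$, the tensor product $L(\lambda)\otimes U^*$ has finite length, so only finitely many $\Omega$-eigenvalues occur on it. The analogous statement holds for $\mathrm{F}_i$. Let $T(\lambda)\subset\mathbb Z$ denote the resulting finite set of active indices; by matching $\mathfrak h$-weights via the submodule $\bT_{m|n}\subset\bK_{m|n}$, the set $T(\lambda)$ coincides with the support of the weight of $[L(\lambda)]$, namely $\{\bar\lambda_1,\dots,\bar\lambda_m,-\bar\lambda_{m+1},\dots,-\bar\lambda_{m+n}\}$.

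Next, letting $I_j\subset\mathbb Z$ be the index set of $\bV_j$, I would take $F_j\subset\bV_j$ to be the finite-dimensional subspace spanned by $\{v_i:i\in T(\lambda)\cap I_j\}$, and set $\fs_j:=\mathfrak{sl}(F_j)\subset\fk_j$. A direct computation using preservation of isotypic components under the $\fs_j$-action on $\bV_j$, combined with Schur's lemma, gives that $[\operatorname{Cent}_{\fk_j}(\fs_j),\operatorname{Cent}_{\fk_j}(\fs_j)]=\mathfrak{sl}(F_j^\perp)$, the $\mathfrak{sl}(\infty)$ on the (infinite-dimensional) complementary subspace $F_j^\perp\subset\bV_j$. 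The task thus reduces to showing $E_{a,b}\cdot[L(\lambda)]=0$ for every $a\neq b$ in $I_j\setminus(T(\lambda)\cap I_j)$.

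The main obstacle is this last vanishing for matrix units $E_{a,b}$ with $|a-b|$ large, since writing $E_{a,b}$ as an iterated bracket of Chevalley generators of $\fg$ involves $e_i$'s with $i\in T(\lambda)$ whenever the interval between $a$ and $b$ straddles $T(\lambda)$; thus the finiteness of $T(\lambda)$ alone does not suffice. I would overcome this via the combinatorial (Fock-space-like) description of the $\mathfrak{sl}(\infty)$-action on $\bK_{m|n}$ arising from Brundan's theory \cite{B,BLW}: on the weight basis $\{[L(\lambda)]\}$, the operator $E_{a,b}$ acts by ``moving a $\bV$-type particle from position $b$ to position $a$'' together with a dual move of a $\bV_*$-type particle from $a$ to $b$, each of which vanishes unless the corresponding position is occupied in the configuration attached to $\lambda$. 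Since $a,b\notin T(\lambda)$ means neither position is occupied, we obtain $E_{a,b}\cdot[L(\lambda)]=0$, and therefore $\mathfrak{sl}(F_j^\perp)\subset\operatorname{Ann}_{\fk_j}([L(\lambda)])$. Combining this with the reduction in the first paragraph yields $\Gamma_{\fg,\fk}(\bK_{m|n})=\bK_{m|n}$.
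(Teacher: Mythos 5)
Your reduction to the basis elements $[L(\lambda)]$ is correct, and your argument for the finiteness of the set $T(\lambda)$ of active Chevalley generators is valid (the paper gets this a bit more cheaply: since $\bT_{m|n}$ is a tensor module it already satisfies the large annihilator condition over $\fg$, so $\mathrm{E}_i(M(\lambda))=\mathrm{F}_i(M(\lambda))=0$ for almost all $i$, and then exactness of $\mathrm{E}_i,\mathrm{F}_i$ and the fact that $L(\lambda)$ is a quotient of $M(\lambda)$ give the same for $L(\lambda)$).

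The gap is in the last step. By choosing $F_j$ to be spanned only by $\{v_i: i\in T(\lambda)\cap I_j\}$, you create the ``straddling'' problem, and your proposed fix does not work: the particle-moving (Fock-space) description of the $\mathfrak{sl}(\infty)$-action holds in the $\{[M(\lambda)]\}$ (Verma) basis of $\bT_{m|n}$, not in the $\{[L(\lambda)]\}$ basis of $\bK_{m|n}$. In fact $[L(\lambda)]$ generally does \emph{not} lie in $\bT_{m|n}$ (for example in $\bK_{1|1}$ an atypical $[L(\lambda)]$ is an infinite alternating sum of Verma classes), and the matrix entries of $E_{a,b}$ in the simple basis involve Kazhdan--Lusztig-type coefficients, not just occupation numbers. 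So the claimed vanishing $E_{a,b}\cdot[L(\lambda)]=0$ for all $a,b\notin T(\lambda)$ is unjustified; it is also stronger than needed. The fix is simply to take $F_j$ larger: with the interval decomposition $\bV_1=\operatorname{span}\{v_i: i\ge 0\}$, $\bV_2=\operatorname{span}\{v_i:i<0\}$ implicit in the paper, take $F_1=\operatorname{span}\{v_0,\dots,v_N\}$ with $N\ge\max T(\lambda)$. Then $[\operatorname{Cent}_{\fk_1}(\mathfrak{sl}(F_1)),\operatorname{Cent}_{\fk_1}(\mathfrak{sl}(F_1))]=\mathfrak{sl}(\operatorname{span}\{v_i:i>N\})$ is the Lie subalgebra generated by $\{e_i,f_i:i>N\}$, all of which annihilate $[L(\lambda)]$; since the annihilator of a vector is a Lie subalgebra, no straddling occurs and no appeal to Brundan's combinatorics is needed. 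This is exactly the observation the paper's proof rests on: large annihilator condition over $\fk_1,\fk_2$ is \emph{equivalent} to $e_ix=f_ix=0$ for almost all $i$.
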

\begin{proof}
Note that an $\mathfrak{sl}(\infty)$-module $\bM$ satisfies the large annihilator condition over $\fk_1$ and $\fk_2$ if and only if for each $x\in\bM$, we have $e_i x=f_i x=0$ for all but finitely many $i\in\mathbb{Z}$.
Indeed, if  $e_i x=f_i x=0$ for all but finitely many $i\in\mathbb{Z}$, then the subalgebra generated by the $e_i,f_i$ that annihilate $x$ contains the commutator subalgebra of the centralizer of a finite-dimensional subalgebra. The other direction is also clear.

Since the classes of simple $\mathfrak{gl}(m|n)$-modules $[L(\lambda)]$ form a basis of $\bK_{m|n}$, we just need to show that  for each $L(\lambda)$ we have $\mathrm{E}_i(L(\lambda))=\mathrm{F}_i(L(\lambda))= 0$
for almost all $i\in\mathbb Z$. However, since $\bT_{m|n}$ satisfies the large annihilator condition, we know that the analogous statement is true for $M(\lambda)$. Therefore, since $L(\lambda)$ is a quotient of $M(\lambda)$, the exactness of the functors $\mathrm{E}_i$ and $\mathrm{F}_i$ implies the desired statement for $L(\lambda)$.
\end{proof}

If we consider the Cartan involution $\sigma$ of $\mathfrak{sl}(\infty)$, $\sigma (e_i)=-f_i$, $\sigma(f_i)=-e_i$, we obtain
\begin{equation}\label{cartan}
\langle g x,y\rangle=-\langle x,\sigma(g)y\rangle
\end{equation}
 for all $g\in\mathfrak{sl}(\infty)$.
If $\bX$ is a $\mathfrak{sl}(\infty)$-module, we denote by $\bX^\vee$ the twist of the algebraic dual $\bX^*$ by $\sigma$.
Note that $ (\bV^{{\boldsymbol{\lambda}},\boldsymbol{\mu}})^\vee= \bV^{\boldsymbol{\mu},{\boldsymbol{\lambda}}}$. Hence, if  $\bX$ is a semisimple object of finite length in $\widetilde{Tens}_{\mathfrak{g}}$, then $\bX^\vee$ is an injective hull of $\bX$ in $\widetilde{Tens}_{\mathfrak{g}}$.

Let $\mathcal P_{m|n}$ denote the semisimple subcategory of $\mathcal{O}_{m|n}^{\mathbb{Z}}$ which consists of projective $\mathfrak {gl}(m|n)$-modules, and let $P_{m|n}$ denote the reduced Grothendieck group of $\mathcal P_{m|n}$.
The $\mathfrak{sl}(\infty)$-module $\bP_{m,n}:= P_{m|n}\otimes_{\mathbb Z}\mathbb C$ is the socle of $\bT_{m,n}$ \cite[Theorem 3.11]{CS}.
Note that for any  projective module $P\in\mathcal P_{m|n}$ the functor $\operatorname{Hom}_{\mathfrak {gl}(m|n)}(P,\cdot)$ on  $\mathcal{O}_{m|n}^{\mathbb{Z}}$ is exact, and  for any module $M\in\mathcal F_{m|n}$  the functor  $\operatorname{Hom}_{\mathfrak {gl}(m|n)}(\cdot,M)$ on $\mathcal P_{m|n}$ is exact.
Moreover, we have the dual bases in $\bK_{m|n}$ and $\bP_{m|n}$ given by the classes of irreducible modules and indecomposable projective modules, respectively.

Consider the pairing $\bK_{m|n}\times \bP_{m|n}\to \mathbb C$ defined by
$$\langle [M],[P]\rangle:=\operatorname{dim}\operatorname{Hom}_{\mathfrak{gl}(m|n)}(P,M).$$
Since the functors $\mathrm{E}_i$ and $\mathrm{F}_i$ are adjoint, we have
$$\langle e_i x,y\rangle=\langle x, f_iy\rangle$$
and
$$\langle f_i x,y\rangle=\langle x, e_iy\rangle,$$
for all $i\in\mathbb{Z}$, $x\in \bK_{m|n}$, $y\in\bP_{m|n}$.
Thus, there is an embedding of $\mathfrak{sl}(\infty)$-modules
\begin{equation}\label{K embeds}\Psi: \bK_{m|n}\hookrightarrow \bP^\vee_{m|n}\end{equation}
 given by  $[M]\mapsto\langle [M],\cdot\rangle$.

\begin{thm}\label{thm: T injective} The $\mathfrak{sl}(\infty)$-module $\bK_{m|n}$ is an injective hull in the category $\mathbb T_{\fg,\fk}$ of the semisimple module $\bP_{m|n}$. Furthermore, there is an isomorphism
$$
\bK_{m|n}\cong\bigoplus_{|\boldsymbol{\lambda}|=m,|\boldsymbol{\mu}|=n}\bI^{\boldsymbol{\lambda},\boldsymbol{\mu}}\otimes(Y_{\boldsymbol{\lambda}}\otimes Y_{{\boldsymbol{\mu}}})
$$
where $Y_{\boldsymbol{\lambda}}$, $Y_{\boldsymbol{\mu}}$ are irreducible modules over $S_m$ and $S_n$ respectively, and $\bI^{\boldsymbol{\lambda},\boldsymbol{\mu}}$ is an injective hull of the simple module $\bV^{\boldsymbol{\lambda},\boldsymbol{\mu}}$ in $\mathbb T_{\fg,\fk}$. Consequently, the layers of the socle filtration of $\bK_{m|n}$  are given by
 $$
\overline{\soc}^k  \bK_{m|n}\cong\bigoplus_{|\boldsymbol{\lambda}|=m,|\boldsymbol{\mu}|=n}(\overline{\soc}^k \bI^{\boldsymbol{\lambda},\boldsymbol{\mu}})^{\oplus (\dim Y_{\boldsymbol{\lambda}} \dim Y_{{\boldsymbol{\mu}}})}
$$
where
$$\overline{\soc}^k \bI^{\boldsymbol{\lambda},{\boldsymbol{\mu}}}\cong
\bigoplus_{\boldsymbol{\lambda'},\boldsymbol{\mu'}}
\bigoplus_{|{\boldsymbol{\gamma}}_1|+|{\boldsymbol{\gamma}}_2|=k}
      N^{\boldsymbol{\lambda}}_{{\boldsymbol{\gamma}}_1,{\boldsymbol{\gamma}}_2,\boldsymbol{\lambda}'}N^{{\boldsymbol{\mu}}}_{{\boldsymbol{\gamma}}_1,{\boldsymbol{\gamma}}_2,{\boldsymbol{\mu}}'}\bV^{\boldsymbol{\lambda}',{\boldsymbol{\mu}}'}.$$
\end{thm}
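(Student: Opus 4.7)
The plan is to establish the claimed direct sum decomposition $\bK_{m|n}\cong\bigoplus \bI^{\boldsymbol{\lambda},\boldsymbol{\mu}}\otimes(Y_{\boldsymbol{\lambda}}\otimes Y_{\boldsymbol{\mu}})$; the socle filtration statement will then follow immediately from Theorem~\ref{thm:socinj}. The strategy has three steps: identify $\bP_{m|n}$ and its injective hull in $\mathbb T_{\fg,\fk}$, embed $\bK_{m|n}$ into this hull via the pairing $\Psi$, and then prove that this embedding is an isomorphism.

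First, since $\bP_{m|n}=\soc\bT_{m|n}$ and $\bT_{m|n}\cong \bV^{\otimes m}\otimes \bV_*^{\otimes n}$, the Schur--Weyl decomposition~\eqref{summands} together with the socle formula~\eqref{socle of S} yields $\bP_{m|n}\cong\bigoplus_{|\boldsymbol{\lambda}|=m,\,|\boldsymbol{\mu}|=n}\bV^{\boldsymbol{\lambda},\boldsymbol{\mu}}\otimes(Y_{\boldsymbol{\lambda}}\otimes Y_{\boldsymbol{\mu}})$. This is a finite direct sum with finite multiplicities, so by Lemma~\ref{lem: R1} and Corollary~\ref{I finte length} the injective hull of $\bP_{m|n}$ in $\mathbb T_{\fg,\fk}$ is $\bI_{\fg,\fk}(\bP_{m|n}):=\bigoplus \bI^{\boldsymbol{\lambda},\boldsymbol{\mu}}\otimes(Y_{\boldsymbol{\lambda}}\otimes Y_{\boldsymbol{\mu}})$. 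The embedding $\Psi\colon\bK_{m|n}\hookrightarrow\bP_{m|n}^\vee$ of~\eqref{K embeds} combined with Lemma~\ref{lem-annihilator} gives $\Psi(\bK_{m|n})\subseteq\Gamma_{\fg,\fk}(\bP_{m|n}^\vee)$, and since $\bP_{m|n}^\vee$ is injective in $\widetilde{Tens}_\fg$ with socle $\bP_{m|n}$, Lemma~\ref{lem: R1}(2) identifies $\Gamma_{\fg,\fk}(\bP_{m|n}^\vee)$ with $\bI_{\fg,\fk}(\bP_{m|n})$. Thus $\bK_{m|n}$ embeds as an essential extension of $\bP_{m|n}$ into the conjectured target.

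The main obstacle is to prove surjectivity of this embedding. The approach I would take is to match Jordan--H\"older multiplicities: compute $[\bK_{m|n}:\bV^{\boldsymbol{\lambda}',\boldsymbol{\mu}'}]$ for each pair of Young diagrams and verify agreement with Corollary~\ref{cor:JHmultiplicities} (in the case $r=2$). Since both modules have finite length in $\mathbb T_{\fg,\fk}$, matching multiplicities forces $\Psi$ to be surjective via the Grothendieck group. The multiplicities on the $\bK_{m|n}$ side can be computed by applying $\Res_{\fk}$ (Lemma~\ref{lem:restriction}) and analyzing the $\fk=\mathfrak{sl}(\infty)\oplus\mathfrak{sl}(\infty)$-structure on the classes $[L(\lambda)]$ using the combinatorics of the translation functors $\mathrm{E}_i,\mathrm{F}_i$ on $\mathcal O_{m|n}^{\mathbb Z}$. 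An alternative path is to prove directly that $\bK_{m|n}$ is injective in $\mathbb T_{\fg,\fk}$, exploiting the mutual adjointness and exactness of $\mathrm{E}_i$ and $\mathrm{F}_i$; an injective essential extension of $\bP_{m|n}$ sitting inside $\bI_{\fg,\fk}(\bP_{m|n})$ must coincide with the hull by uniqueness of injective envelopes.

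Once the decomposition of $\bK_{m|n}$ is in place, the socle filtration formula is immediate: applying Theorem~\ref{thm:socinj} to each summand $\bI^{\boldsymbol{\lambda},\boldsymbol{\mu}}$ with $r=2$ gives the stated expression for $\overline{\soc}^k\bI^{\boldsymbol{\lambda},\boldsymbol{\mu}}$, and taking the direct sum with multiplicities $\dim Y_{\boldsymbol{\lambda}}\dim Y_{\boldsymbol{\mu}}$ yields the formula for $\overline{\soc}^k\bK_{m|n}$.
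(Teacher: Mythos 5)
Your setup is correct and matches the paper: $\bP_{m|n}\cong\bigoplus_{|\boldsymbol{\lambda}|=m,|\boldsymbol{\mu}|=n}\bV^{\boldsymbol{\lambda},\boldsymbol{\mu}}\otimes(Y_{\boldsymbol{\lambda}}\otimes Y_{\boldsymbol{\mu}})$ by Schur--Weyl and \eqref{socle of S}, so $\Gamma_{\fg,\fk}(\bP_{m|n}^\vee)$ is the injective hull, and $\Psi$ together with Lemma~\ref{lem-annihilator} embeds $\bK_{m|n}$ into it. The gap is the surjectivity step, which is where all the work in the paper lies, and neither of your two proposed routes constitutes an argument.

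Your route (a) --- matching Jordan--H\"older multiplicities --- requires computing $[\bK_{m|n}:\bV^{\boldsymbol{\lambda}',\boldsymbol{\mu}'}]$ independently from the target answer and checking it against Corollary~\ref{cor:JHmultiplicities}. But those multiplicities encode nontrivial information about the block structure and decomposition numbers of $\mathcal O^{\mathbb Z}_{m|n}$; ``analyzing the $\fk$-structure on the classes $[L(\lambda)]$ via translation functors'' is exactly the kind of combinatorial statement one would need the theorem (or equivalently the KL combinatorics of super category $\mathcal O$) to establish. As written, this is circular or at least an unfilled hole. Your route (b) --- proving $\bK_{m|n}$ injective directly from mutual adjointness of $\mathrm{E}_i,\mathrm{F}_i$ --- is a one-line gesture with no mechanism behind it; adjointness of translation functors on $\mathcal O^{\mathbb Z}_{m|n}$ does not by itself yield a lifting property for $\mathfrak{sl}(\infty)$-module maps in $\mathbb T_{\fg,\fk}$.

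What the paper actually does is quite different and more elementary in character. It works pointwise with the dual bases $\{l_\lambda=[L(\lambda)]\}$ of $\bK_{m|n}$ and $\{p_\lambda=[P(\lambda)]\}$ of $\bP_{m|n}$: a functional $\omega\in\Gamma_{\fg,\fk}(\bP_{m|n}^\vee)$ lies in $\Psi(\bK_{m|n})$ iff $\omega(p_\lambda)=0$ for almost all $\lambda$. Since $\omega$ is annihilated by some $\fg_{q,r}$, it suffices to show $p_\lambda\in\fg_{q,r}\bP_{m|n}$ for almost all $\lambda$; this is reduced (via an injectivity argument for the trivial $\fg_{q,r}$-module) to showing $p_\lambda\in\fg_{q,r}\bT_{m|n}$, which is Lemma~\ref{lemma for thm}. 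That lemma expands $p_\lambda$ in Verma classes using the filtration of $\operatorname{Ind}^{\mathfrak{gl}(m|n)}_{\mathfrak{gl}(m|n)_{\bar 0}}P^0(\lambda)$ and bounded support shifts, and then invokes the purely $\mathfrak{sl}(\infty)$-theoretic Lemma~\ref{lem:tech} from the appendix, which identifies $(\soc\bT_{m|n})\cap\bY_{m|n}$ with a subspace of $\fs\bY_{m|n}$ via careful bookkeeping of partial contraction maps $\Phi^{\bbW_r}_{ij}$. This combinatorial core is what your proposal is missing.
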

\begin{proof}

The module  $\Gamma_{\fg,\fk}(\bP_{m|n}^\vee)$  is an injective hull of the semisimple module $\bP_{m|n}$ in the category $\mathbb T_{\fg,\fk}$, so it suffices to show that the  image of  $\bK_{m|n}$ under the embedding (\ref{K embeds}) equals $\Gamma_{\fg,\fk}(\bP_{m|n}^\vee)$.
The fact that $\Psi(\bK_{m|n})\subset \Gamma_{\fg,\fk}(\bP_{m|n}^\vee)$ follows from Lemma~\ref{lem-annihilator}. Herein, we will identify $\bK_{m|n}$ with its image $\Psi(\bK_{m|n})=\mathrm{span}\{\langle l_{\lambda},\cdot\rangle\mid \lambda\in\Phi\}$, where $ l_\lambda:=[L(\lambda)]$.

Now $\soc(\Gamma_{\fg,\fk}(\bP_{m|n}^\vee))=\bP_{m|n}$, since $\bP_{m|n}$ is semisimple, and  $\soc\bT_{m|n}=\bP_{m|n}$ by \cite[Theorem 3.11]{CS}.
Therefore, since $\bT_{m|n}\subset\bK_{m|n}\subset\Gamma_{\fg,\fk}(\bP_{m|n}^\vee)$, we have $\soc\bK_{m|n}=\bP_{m|n}$.

We will show that  $\bK_{m|n}=\Gamma_{\fg,\fk}(\bP_{m|n}^\vee)$. To accomplish this, we  use the existence of the dual bases $p_\lambda:=[P(\lambda)]\in\bP_{m|n}$ and $l_\lambda\in\bK_{m|n}$, where
 $L(\lambda)$ denotes the irreducible $\mathfrak{gl}(m|n)$-module with highest weight $\lambda\in\Phi$ and $P(\lambda)$ is a projective cover of  $L(\lambda)$.

Fix $\omega\in \Gamma_{\fg,\fk}(\bP^{\vee}_{m|n})$. To prove that $\omega\in\bK_{m|n}=\mathrm{span}\{\langle l_{\lambda},\cdot\rangle\mid \lambda\in\Phi\}$, it suffices to show that $\omega(p_\lambda)=0$ for almost all $\lambda\in\Phi$.
For each $q,r\in\mathbb{Z}$, with $q<r$, we let $\fg_{q,r}:=\fg_{q}^-\oplus\fg_{r}^+$, where $\fg_{q}^-$ is the subalgebra of $\fg$ generated by $e_i,f_i$ for $i<q$ and $\fg_{r}^+$ is the subalgebra of $\fg$ generated by $e_i,f_i$ for $i>r$.
By the annihilator condition,  $\omega$ is $\mathfrak{g}_{q,r}$-invariant for suitable $q$ and $r$. Fix such $q$ and $r$. Then since   $\omega$ is $\mathfrak{g}_{q,r}$-invariant, it suffices to show that $p_\lambda\in\mathfrak{g}_{q,r}\bP_{m|n}$ for almost all $\lambda\in\Phi$ .

If  $p_\lambda\in\bP_{m|n}\cap(\mathfrak{g}_{q,r}\bT_{m|n})$, then  $p_\lambda\in\mathfrak{g}_{q,r}\bP_{m|n}$. Indeed, for any $\fg_{q,r}$-module $\bM$ we have
$$
\fg_{q,r}\bM=\bigcap_{\varphi\in\mathrm{Hom}_{\fg_{q,r}}(\bM,\mathbb{C})} \ker\ \varphi.
$$
Now any $\fg_{q,r}$-module homomorphism $\varphi:\bP_{m|n}\rightarrow\mathbb{C}$ lifts to a $\fg_{q,r}$-module  homomorphism $\varphi:\bK_{m|n}\rightarrow\mathbb{C}$, since the trivial module $\mathbb{C}$ is injective in the full subcategory of $\fg_{q,r}$-mod consisting of integrable finite-length $\fg_{q,r}$-modules satisfying the large annihilator condition \cite{DPS}. Hence, the claim follows.

For each $\lambda\in\Phi$ we define $\mathrm{supp}(\bar{\lambda})$ to be the multiset $\{\bar\lambda_1,\ldots,\bar\lambda_m,-\bar\lambda_{m+1},\ldots,-\bar\lambda_{m+n}\}$, where
$$\bar\lambda:=\lambda+(m-1,\dots,1,0|0,-1,\dots,1-n).$$
The set of $\lambda\in\Phi$ such that
 $\mathrm{supp}(\bar{\lambda})\cap(\mathbb{Z}_ {<(q-m-n)}\cup\mathbb{Z}_ {>(r+m+n)})=\emptyset$ is finite.
Hence, to finish the proof of the theorem, it suffices to show the following.

\begin{lem}\label{lemma for thm} If $\mathrm{supp}(\bar{\lambda})\cap\mathbb{Z}_ {<(q-m-n)}\neq\emptyset$, then $p_\lambda\in\mathfrak{g}_{q}^-\bT_{m|n}$. Similarly, if
$\mathrm{supp}(\bar{\lambda})\cap\mathbb{Z}_ {>(r+m+n)}\neq\emptyset$, then $p_\lambda\in\mathfrak{g}_{r}^+\bT_{m|n}$.
\end{lem}
\begin{proof}  We will prove the first statement; the proof of the second statement is similar.
  We can write $p_\lambda=\sum_{\nu}c_{\nu}m_{\nu}$, where each $c_\nu\in\mathbb{Z}_{>0}$ and $m_{\nu}=[M(\nu)]$ is the class of the Verma module $M(\nu)$ over $\mathfrak{gl}(m|n)$ of highest weight $\nu\in\Phi$.

We claim that  $\mathrm{supp}(\bar{\nu})\cap\mathbb{Z}_ {<q}\neq\emptyset$ for every $m_\nu$ which occurs in the decomposition of   $p_\lambda$.
Indeed, recall that $P(\lambda)$ is a direct
    summand in the induced module $\operatorname{Ind}^{\mathfrak{gl}(m|n)}_{\mathfrak{gl}(m|n)_{\bar 0}}P^0(\lambda)$, where $P^0(\lambda)$ is a projective cover of the simple
    $\mathfrak{gl}(m|n)_{\bar 0}$-module with highest weight $\lambda$. Now
\begin{equation}\label{KL}
  [P^0(\lambda)]=\sum_{w\in \mathcal W}b_{w\cdot \lambda}[M^0({w\cdot\lambda})],
  \end{equation}
    where $M^0({\mu})$ denotes the Verma module over $\mathfrak{gl}(m|n)_{\bar 0}$ with highest weight $\mu$, $\mathcal W$ denotes the Weyl group of  $\mathfrak{gl}(m|n)_{\bar 0}$ and $w\cdot\lambda$ denotes the $\rho_{\bar{0}}$-shifted action of $\mathcal W$. The isomorphism of $\mathfrak{gl}(m|n)$-modules
    $$M({\mu})\cong \operatorname{Ind}^{\mathfrak{gl}(m|n)}_{\mathfrak{gl}(m|n)_{\bar 0}\oplus \mathfrak{gl}(m|n)_1}M^0(\mu)$$
    implies that
    $$\operatorname{Ind}^{\mathfrak{gl}(m|n)}_{\mathfrak{gl}(m|n)_{\bar 0}}M^0(\mu)\cong \operatorname{Ind}^{\mathfrak{gl}(m|n)}_{\mathfrak{gl}(m|n)_{\bar 0}\oplus \mathfrak{gl}(m|n)_1}(M^0(\mu)\otimes U(\mathfrak{gl}(m|n)_1).$$
    Therefore, $\operatorname{Ind}^{\mathfrak{gl}(m|n)}_{\mathfrak{gl}(m|n)_{\bar 0}}M^0(\mu)$ admits a filtration by Verma modules $M({\mu+\gamma})$ where $\gamma$ runs over the set of
    weights of $U(\mathfrak{gl}(m|n)_1)$. Since $\mathrm{supp}(\gamma)\subset\{-m-n,\dots,m+n\}$ for every $\gamma$, we have  $$|(\overline{\mu+\gamma})_i-\bar{\mu}_i|\leq m+n.$$ Combining this with (\ref{KL}) we obtain that for each
    $i\leq m+n$, $|\bar\nu_i-\bar\lambda_{w(i)}|<m+n$, for some $w\in\mathcal W$. The claim follows.

    Following the notations of Lemma \ref{lem:tech} from the appendix, we set $$\bbW_1=\mathrm{span}\{v_i,\,|\,i<q\},\quad \bbW_2=\mathrm{span}\{v_j,\,|\,j\geq q\}.$$
    Then $\fg^-_q=\mathfrak{sl}(\bbW_1)=\fs$. By above, every $m_{\nu}$ occurring in the decomposition of $p_\lambda$ is contained in $\bY_{m|n}$. Hence
    $p_\lambda\in\bY_{m|n}$. Since we also have $p_\lambda\in\soc \bT_{m|n}$, Lemma \ref{lem:tech} implies that $p_\lambda\in\mathfrak{g}_{q}^-\bT_{m|n}$.
\end{proof}
Hence, $\bK_{m|n}=\Gamma_{\fg,\fk}(\bP_{m|n}^\vee)$, and the description of the socle filtration now follows from Theorem~\ref{thm:socinj}.
\end{proof}

\subsection{The symmetric group action on $\bK_{m|n}$}
Recall that we have a natural action of the product of symmetric groups $S_m\times S_n$ on $\bT_{m|n}$, which commutes with the $\mathfrak{sl}(\infty)$-module structure on $\bT_{m|n}$. Moreover, it follows from
\cite[Sect. 6]{DPS} that
\begin{equation}\label{endsym}
  \operatorname{End}_{\mathfrak{sl}(\infty)}(\bT_{m|n})=\operatorname{End}_{\mathfrak{sl}(\infty)}(\bP_{m|n})=\mathbb C[S_m\times S_n].
\end{equation}

A similar result is true for $\bK_{m|n}$:
\begin{prop}\label{end}
  $$\operatorname{End}_{\mathfrak{sl}(\infty)}(\bK_{m|n})=\operatorname{End}_{\mathfrak{sl}(\infty)}(\bP_{m|n})=\mathbb C[S_m\times S_n].$$
\end{prop}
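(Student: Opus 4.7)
The plan is to exploit the decomposition from Theorem~\ref{thm: T injective}, which identifies $\bK_{m|n}$ with $\bigoplus_{|\boldsymbol{\lambda}|=m,|\boldsymbol{\mu}|=n} \bI^{\boldsymbol{\lambda},\boldsymbol{\mu}} \otimes (Y_{\boldsymbol{\lambda}} \otimes Y_{\boldsymbol{\mu}})$. The goal then reduces to showing that the summands $\bI^{\boldsymbol{\lambda},\boldsymbol{\mu}}$ are pairwise non-isomorphic and each has scalar endomorphism ring; the endomorphism algebra of $\bK_{m|n}$ will then collapse to $\bigoplus_{\boldsymbol{\lambda},\boldsymbol{\mu}} \operatorname{End}_{\mathbb{C}}(Y_{\boldsymbol{\lambda}}\otimes Y_{\boldsymbol{\mu}})$, which equals $\mathbb{C}[S_m]\otimes\mathbb{C}[S_n]=\mathbb{C}[S_m\times S_n]$ by Artin--Wedderburn.

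First I would prove $\operatorname{End}_{\fg}(\bI^{\boldsymbol{\lambda},\boldsymbol{\mu}})=\mathbb{C}$. Given $\phi\in\operatorname{End}_{\fg}(\bI^{\boldsymbol{\lambda},\boldsymbol{\mu}})$, its restriction to the simple socle $\bV^{\boldsymbol{\lambda},\boldsymbol{\mu}}$ is a scalar $c$ by Schur's lemma. The endomorphism $f:=\phi-c\cdot\operatorname{id}$ vanishes on $\soc\bI^{\boldsymbol{\lambda},\boldsymbol{\mu}}$; if $f\neq 0$, then $\operatorname{im} f$ is a nonzero submodule of $\bI^{\boldsymbol{\lambda},\boldsymbol{\mu}}$ and must contain the unique simple submodule $\bV^{\boldsymbol{\lambda},\boldsymbol{\mu}}$. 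Consequently $\bV^{\boldsymbol{\lambda},\boldsymbol{\mu}}$ would appear as a composition factor of both $\ker f$ (which contains the socle) and $\bI^{\boldsymbol{\lambda},\boldsymbol{\mu}}/\ker f\cong\operatorname{im} f$, giving Jordan--H\"older multiplicity at least $2$. But Corollary~\ref{cor:JHmultiplicities} yields multiplicity exactly $1$, the formula collapsing to the single term $N^{\boldsymbol{\lambda}}_{\emptyset,\dots,\emptyset,\boldsymbol{\lambda}}N^{\boldsymbol{\mu}}_{\emptyset,\dots,\emptyset,\boldsymbol{\mu}}=1$.

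Next I would verify $\operatorname{Hom}_{\fg}(\bI^{\boldsymbol{\lambda},\boldsymbol{\mu}},\bI^{\boldsymbol{\lambda}',\boldsymbol{\mu}'})=0$ for distinct index pairs with $|\boldsymbol{\lambda}|=|\boldsymbol{\lambda}'|=m$ and $|\boldsymbol{\mu}|=|\boldsymbol{\mu}'|=n$. A nonzero morphism would have image containing the socle $\bV^{\boldsymbol{\lambda}',\boldsymbol{\mu}'}$, forcing it to appear as a composition factor of $\bI^{\boldsymbol{\lambda},\boldsymbol{\mu}}$. By Corollary~\ref{cor:JHmultiplicities}, this requires Young diagrams $\boldsymbol{\gamma}_1,\dots,\boldsymbol{\gamma}_r$ with $N^{\boldsymbol{\lambda}}_{\boldsymbol{\gamma}_1,\dots,\boldsymbol{\gamma}_r,\boldsymbol{\lambda}'}\neq 0$ and $N^{\boldsymbol{\mu}}_{\boldsymbol{\gamma}_1,\dots,\boldsymbol{\gamma}_r,\boldsymbol{\mu}'}\neq 0$; the size constraint $|\boldsymbol{\lambda}|=|\boldsymbol{\lambda}'|$ forces all $\boldsymbol{\gamma}_i=\emptyset$, yielding $\boldsymbol{\lambda}=\boldsymbol{\lambda}'$ and likewise $\boldsymbol{\mu}=\boldsymbol{\mu}'$, a contradiction.

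Assembling these two facts with the decomposition, $\operatorname{End}_{\fg}(\bK_{m|n})$ reduces as above to $\mathbb{C}[S_m\times S_n]$; the natural $S_m\times S_n$-action on $\bK_{m|n}$ inherited from $\bT_{m|n}$ (extended to $\bK_{m|n}$ either by injectivity of $\bK_{m|n}$ in $\mathbb{T}_{\fg,\fk}$ or directly via the construction $\Gamma_{\fg,\fk}(\bP_{m|n}^\vee)$) provides the canonical embedding $\mathbb{C}[S_m\times S_n]\hookrightarrow\operatorname{End}_{\fg}(\bK_{m|n})$, which the above dimension count upgrades to an isomorphism. The step requiring the most care is the endomorphism calculation $\operatorname{End}_{\fg}(\bI^{\boldsymbol{\lambda},\boldsymbol{\mu}})=\mathbb{C}$, but it is not a serious obstacle given the multiplicity-one property of the socle $\bV^{\boldsymbol{\lambda},\boldsymbol{\mu}}$ in $\bI^{\boldsymbol{\lambda},\boldsymbol{\mu}}$ already supplied by Corollary~\ref{cor:JHmultiplicities}.
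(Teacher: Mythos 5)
Your proof is correct, but it takes a genuinely different route from the paper's. The paper's argument is more economical: it considers the restriction map $\operatorname{End}_{\fg}(\bK_{m|n})\to\operatorname{End}_{\fg}(\bP_{m|n})$ to the socle, proves injectivity from the single observation (a consequence of Theorem~\ref{thm:socinj}) that $[\bK_{m|n}/\bP_{m|n}:\bV^{\boldsymbol{\lambda},\boldsymbol{\mu}}][\bP_{m|n}:\bV^{\boldsymbol{\lambda},\boldsymbol{\mu}}]=0$ for every simple $\bV^{\boldsymbol{\lambda},\boldsymbol{\mu}}$ --- so any endomorphism killing $\bP_{m|n}$ has image with trivial socle, hence is zero --- and proves surjectivity from the injectivity of $\bK_{m|n}$ in $\mathbb T_{\fg,\fk}$; then it cites the known identity (\ref{endsym}) for $\operatorname{End}_{\fg}(\bP_{m|n})$. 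You instead compute $\operatorname{End}_{\fg}(\bK_{m|n})$ head-on from the Krull--Schmidt decomposition in Theorem~\ref{thm: T injective}: you show $\operatorname{End}_{\fg}(\bI^{\boldsymbol{\lambda},\boldsymbol{\mu}})=\mathbb C$ and $\operatorname{Hom}_{\fg}(\bI^{\boldsymbol{\lambda},\boldsymbol{\mu}},\bI^{\boldsymbol{\lambda}',\boldsymbol{\mu}'})=0$ for distinct pairs, both via the multiplicity constraints of Corollary~\ref{cor:JHmultiplicities}, and then invoke Artin--Wedderburn. This is more explicit, proves the stronger local fact that each $\bI^{\boldsymbol{\lambda},\boldsymbol{\mu}}$ is an endo-trivial brick (which the paper never states), and makes the algebra structure visible; the paper's version avoids any per-summand analysis. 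Two small points worth tightening: (i) your appeal to Schur's lemma for the infinite-dimensional $\bV^{\boldsymbol{\lambda},\boldsymbol{\mu}}$ is standard for simple objects of $\mathbb T_{\fg}$ but deserves a word, and (ii) identifying the abstract isomorphism with the canonical one via the $S_m\times S_n$-action should be pinned down by noting the action is already faithful on $\bP_{m|n}=\soc\bT_{m|n}$ by (\ref{endsym}), so the dimension count forces the canonical map to be an isomorphism.
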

\begin{proof} Recall that $\bP_{m|n}$ is the socle of $\bK_{m|n}$ by Theorem~\ref{thm: T injective}.  Every $\varphi\in \operatorname{End}_{\mathfrak{sl}(\infty)}(\bK_{m|n})$ maps the socle to the socle, hence we have a homomorphism
  \begin{equation}\label{homom} \operatorname{End}_{\mathfrak{sl}(\infty)}(\bK_{m|n})\to\operatorname{End}_{\mathfrak{sl}(\infty)}(\bP_{m|n}).\end{equation}
Let $\bK'_{m|n}=\bK_{m|n}/\bP_{m|n}$. By Theorem \ref{thm:socinj}, for every simple module
  $\bV^{\lambda,\mu}$ we have
  $$[\bK'_{m|n}:\bV^{\lambda,\mu}][\bP_{m|n}:\bV^{\lambda,\mu}]=0.$$
  Therefore,  every $\varphi\in
  \operatorname{End}_{\mathfrak{sl}(\infty)}(\bK_{m|n})$ such that
  $\varphi(\bP_{m|n})=0$ is identically zero,  since for such $\varphi$ the socle of $\operatorname{im} \varphi$ is zero. In other words,
  homomorphism (\ref{homom}) is injective. The surjectivity follows from the fact that every
  $\varphi:\bP_{m|n}\to \bP_{m|n}\hookrightarrow\bK_{m|n}$ extends to $\tilde\varphi:\bK_{m|n}\to\bK_{m|n}$ by the injectivity of $\bK_{m|n}$.
  \end{proof}
  \subsection{The Zuckerman functor $\Gamma_{\mathfrak{gl}(m|n)}$ and
    the category $\mathcal F_{m|n}^{\mathbb Z}$}
  Let us recall the definition of the derived Zuckerman functor. A systematic treatment of the Zuckerman functor for Lie superalgebras can be found in \cite{S}.
  Assume that $M$ is a finitely generated $\mathfrak{gl}(m|n)$-module which is  semisimple over the
  Cartan subalgebra of $\mathfrak{gl}(m|n)$. Let $\Gamma_{\mathfrak{gl}(m|n)}(M)$ denote the subspace of $\mathfrak{gl}(m|n)_0$-finite vectors. Then
  $\Gamma_{\mathfrak{gl}(m|n)}(M)$ is a finite-dimensional $\mathfrak{gl}(m|n)$-module, and hence $\Gamma_{\mathfrak{gl}(m|n)}$ is a left exact functor from the category
  of finitely generated $\mathfrak{gl}(m|n)$-modules, semisimple over the Cartan subalgebra, to the category $\mathcal F_{m|n}$ of finite-dimensional modules. The corresponding right derived functor $\Gamma^i_{\mathfrak{gl}(m|n)}$ is called the \emph{$i$-th derived Zuckerman functor}. Note that $\Gamma^i_{\mathfrak{gl}(m|n)}(X)=0$ for
  $i>\dim{\mathfrak{gl}(m|n)_0}-(m+n)$. We are interested in the restriction of this functor
    $$\Gamma^i_{\mathfrak{gl}(m|n)}:\mathcal O_{m|n}^{\mathbb Z}\to \mathcal F_{m|n}^{\mathbb Z}.$$
    Let us consider the linear operator $\gamma:\bK_{m|n}\to\bJ_{m|n}$ given by
    $$\gamma([M])=\sum_i(-1)^i[\Gamma^i_{\mathfrak{gl}(m|n)} M].$$
    This operator is well defined as for any short exact sequence of $\mathfrak{gl}(m|n)$-modules
    $$0\to N\to M\to L\to 0,$$
    we have the Euler characteristic identity
    $$\gamma ([M])=\gamma ([N])+\gamma ([L]).$$
      It is well known that     $\Gamma^i_{\mathfrak{gl}(m|n)}$  commutes with the functors $\cdot\otimes U$ and $\cdot\otimes U^*$, and with the projection to the
      block  $(\mathcal O_{m|n}^{\mathbb Z})_{\chi}$ with a
      fixed central character $\chi$. Therefore, $\gamma$
    is a homomorphism of $\mathfrak{sl}(\infty)$-modules.
    \begin{prop}\label{Zuckerman} The homomorphism $\gamma$ is given by the formula
      \begin{equation}\label{gamma} \gamma=\sum_{s\in S_m\times S_n}\operatorname{sgn}(s) s,\end{equation}
      where the action of $s$ on $\bK_{m|n}$ is defined in Proposition \ref{end}.
    \end{prop}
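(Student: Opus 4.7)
The strategy is to recognize both $\gamma$ and $A:=\sum_{s\in S_m\times S_n}\operatorname{sgn}(s)\,s$ as $\mathfrak{sl}(\infty)$-module maps $\bK_{m|n}\to\bJ_{m|n}$ inhabiting a one-dimensional Hom space, and then pin down the proportionality constant by testing on a single Verma class.

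First I would check both maps fit this description. The map $\gamma$ is $\mathfrak{sl}(\infty)$-equivariant because the derived Zuckerman functors commute with the translation functors $\mathrm{E}_i,\mathrm{F}_i$ (noted just before the statement), and its image lies in $\bJ_{m|n}$ by construction. For $A$, each $s$ is $\mathfrak{sl}(\infty)$-equivariant by Proposition~\ref{end}; using the decomposition
$$\bK_{m|n}\cong\bigoplus_{|\boldsymbol{\lambda}|=m,\,|\boldsymbol{\mu}|=n}\bI^{\boldsymbol{\lambda},\boldsymbol{\mu}}\otimes(Y_{\boldsymbol{\lambda}}\otimes Y_{\boldsymbol{\mu}})$$
from Theorem~\ref{thm: T injective}, the antisymmetrizer $A$ acts on each factor $Y_{\boldsymbol{\lambda}}\otimes Y_{\boldsymbol{\mu}}$ by projecting to its sign-isotypic component, and the sign representation of $S_m\times S_n$ occurs in $Y_{\boldsymbol{\lambda}}\otimes Y_{\boldsymbol{\mu}}$ only when $(\boldsymbol{\lambda},\boldsymbol{\mu})=((1^m),(1^n))$. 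Hence the image of $A$ lies in $\bI^{(1^m),(1^n)}=\bJ_{m|n}$ (the injective hull of $\bV^{(1^m),(1^n)}=\Lambda^m\bV\otimes\Lambda^n\bV_*$). The same decomposition together with the pairwise non-isomorphism of the indecomposable injectives $\bI^{\boldsymbol{\lambda},\boldsymbol{\mu}}$ gives
$$\operatorname{Hom}_{\mathfrak{sl}(\infty)}(\bK_{m|n},\bJ_{m|n})\cong\operatorname{End}_{\mathfrak{sl}(\infty)}\bigl(\bI^{(1^m),(1^n)}\bigr)\otimes(Y_{(1^m)}\otimes Y_{(1^n)})^*\cong\mathbb C,$$
so $\gamma=c\cdot A$ for some $c\in\mathbb C$.

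To show $c=1$, I would test both sides on $[M(\lambda)]$ for a strictly dominant typical weight $\lambda$. On the $A$-side, inducing the even BGG resolution of $F^0(\lambda)$ through the parabolic $\mathfrak{p}=\mathfrak{gl}(m|n)_{\bar 0}\oplus\mathfrak{gl}(m|n)_1^+$ yields a Verma resolution of $K(\lambda)=\operatorname{Ind}_{\mathfrak{p}}^{\mathfrak{gl}(m|n)}F^0(\lambda)$, giving $[K(\lambda)]=\sum_{w\in S_m\times S_n}\operatorname{sgn}(w)[M(w\cdot\lambda)]$ in $\bK_{m|n}$; combined with the immediate identity $s\cdot[M(\mu)]=[M(s\cdot\mu)]$ under $[M(\mu)]\leftrightarrow m_\mu$, this yields $A\cdot[M(\lambda)]=[K(\lambda)]$. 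On the $\gamma$-side, I would use that $M(\lambda)|_{\mathfrak{gl}(m|n)_{\bar 0}}\cong\Lambda(\mathfrak{gl}(m|n)_1^-)\otimes M^0(\lambda)$, that $\Gamma^i_{\mathfrak{gl}(m|n)}$ coincides after restriction with the even Zuckerman functor and commutes with tensoring by the finite-dimensional module $\Lambda(\mathfrak{gl}(m|n)_1^-)$, and the classical Bott--Borel--Weil identity $\sum_i(-1)^i[\Gamma^i_{\bar 0}(M^0(\lambda))]=[F^0(\lambda)]$ for dominant $\lambda$, to conclude that $\sum_i(-1)^i[\Gamma^i(M(\lambda))]$ has the same $\mathfrak{gl}(m|n)_{\bar 0}$-character as $K(\lambda)$. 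Typicality of $\lambda$ ensures that the block of $K(\lambda)$ in $\mathcal F_{m|n}^{\mathbb Z}$ contains no other simple module, so the $\mathfrak{gl}(m|n)_{\bar 0}$-character identity lifts to $\gamma([M(\lambda)])=[K(\lambda)]$ in $\bJ_{m|n}$, yielding $c=1$.

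The main obstacle I anticipate is this last Bott--Borel--Weil-style step: justifying rigorously that the super Zuckerman of a Verma is controlled by the even one via restriction, and then passing from a $\mathfrak{gl}(m|n)_{\bar 0}$-character identity to an equality of classes in $\bJ_{m|n}$ using the typicality hypothesis.
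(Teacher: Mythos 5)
Your overall strategy is genuinely different from the paper's and is essentially correct, but it contains one circularity that needs to be removed. You treat $\gamma$ and $A$ as elements of a one-dimensional $\operatorname{Hom}$ space and calibrate on a single Verma, whereas the paper works entirely inside $\operatorname{End}(\bK_{m|n})\cong\mathbb C[S_m\times S_n]$ (Proposition~\ref{end}): since an endomorphism of $\bK_{m|n}$ is determined by its restriction to the socle $\bP_{m|n}\subset\bT_{m|n}$, it suffices to check $\gamma=\sum\operatorname{sgn}(s)s$ on $[M(\lambda)]$ for \emph{all} $\lambda$, which the paper then does for arbitrary $\lambda$ by reducing (via $\operatorname{Res}_0\Gamma^i=\Gamma^i_{\bar 0}\operatorname{Res}_0$ and the Verma filtration of $\operatorname{Res}_0 M(\lambda)$) to computing every $\Gamma^i_{\bar 0}(M^0(\lambda)^\vee)$ via Kostant's theorem and the Weyl character formula, with no typicality hypothesis and no a priori knowledge about $\bJ_{m|n}$. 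Your approach buys economy (one calibration point instead of all weights, and you only need the top-degree piece of the Kostant computation, i.e., the BBW-type Euler identity for a dominant regular even Verma), at the cost of needing the typicality argument to lift a character identity to $\bJ_{m|n}$.

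The circularity: you invoke $\bJ_{m|n}\cong\bI^{(1^m),(1^n)}$ to conclude $\operatorname{Hom}_{\mathfrak{sl}(\infty)}(\bK_{m|n},\bJ_{m|n})\cong\mathbb C$. But that isomorphism is Corollary~\ref{fin-dim}, which in the paper is deduced \emph{from} Proposition~\ref{Zuckerman}; before the proposition is proved one only knows $\bW_{m|n}\subset\bJ_{m|n}\subset\bK_{m|n}$ and $\soc\bW_{m|n}\cong\bV^{(1^m),(1^n)}$, not that $\soc\bJ_{m|n}$ is simple. The cleanest repair makes the $\operatorname{Hom}$-space computation unnecessary: view $\gamma$ directly as an element of $\operatorname{End}(\bK_{m|n})=\mathbb C[S_m\times S_n]$ (treating $\bJ_{m|n}$ as a submodule of $\bK_{m|n}$), write $\gamma=\sum_s c_s s$, and observe that for a single regular $\lambda$ the vectors $[M(s\cdot\lambda)]=s\cdot[M(\lambda)]$, $s\in S_m\times S_n$, are linearly independent in $\bT_{m|n}$, so the coefficients $c_s$ are read off from $\gamma([M(\lambda)])$. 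Your BBW-plus-typicality computation then yields $\gamma([M(\lambda)])=[K(\lambda)]=\sum_s\operatorname{sgn}(s)[M(s\cdot\lambda)]$, hence $c_s=\operatorname{sgn}(s)$, with no reference to $\bJ_{m|n}$ needed. Two further small points: $\bV^{(1^m),(1^n)}$ is the \emph{socle} of $\Lambda^m\bV\otimes\Lambda^n\bV_*$, not equal to it; and at the last lifting step you should use the $\mathfrak{gl}(m|n)_{\bar 0}$-\emph{super}character (or track parities), since the plain even character does not distinguish $M$ from $\Pi M$ and would leave a sign ambiguity in the scalar $c'$.
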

    \begin{proof} By Proposition \ref{end},
      it suffices to check the equality (\ref{gamma}) on vectors in $\bT_{m|n}$, which amounts to checking that for all Verma modules $M(\lambda)$
\begin{equation}\label{verma}
  \gamma([M(\lambda)])=\sum_{s\in S_m\times S_n}\operatorname{sgn}(s) [M(s\cdot\lambda)],
  \end{equation}
      where $s\cdot \lambda=s(\lambda+\rho)-\rho$ and $\rho=(m-1,\dots,0|0,-1,\dots,1-n)$.

      Consider the functor $\operatorname{Res}_0$ of restriction to $\mathfrak{gl}(m|n)_0$. This is an exact functor from the category
      of finitely generated  $\mathfrak{gl}(m|n)$-modules, semisimple over the Cartan subalgebra, to the similar category of $\mathfrak{gl}(m|n)_0$-modules. It is
      clear from the definition of $\Gamma^i_{\mathfrak{gl}(m|n)}$ that
      \begin{equation}\label{res}
       \operatorname{Res}_0\Gamma^i_{\mathfrak{gl}(m|n)}=\Gamma^i_{\mathfrak{gl}(m|n)_0}\operatorname{Res}_0.
        \end{equation}
        Recall that every Verma module $M(\lambda)$ over $\mathfrak{gl}(m|n)$ has a  finite filtration with successive quotients isomorphic to Verma modules
        $M^0(\mu)$ over $\mathfrak{gl}(m|n)_0$. Hence by (\ref{res}) it
        suffices to check the analogue of (\ref{verma}) for even Verma modules:
        \begin{equation}\label{evenverma}
       \gamma^0([M^0(\lambda)])=\sum_{s\in S_m\times S_n}\operatorname{sgn}(s) [M^0(s\cdot\lambda)],
       \end{equation}
       where $\gamma^0$ is the obvious analogue of $\gamma$. To prove
       (\ref{evenverma}) we observe that
       $[M^0(\lambda)]=[M^0(\lambda)^\vee]$ where $X^\vee$ stands for
       the contragredient dual of $X$.

       It is easy to compute
       $\Gamma^i_{\mathfrak{gl}(m|n)_0}M^0(\lambda)^\vee$.
       Let $\mathfrak t$ denote the Cartan subalgebra of
       $\mathfrak{gl}(m|n)$, and let $\mathfrak n_0^{+}$, $\mathfrak n_0^{-}$ be the maximal
       nilpotent ideals of the Borel and opposite Borel subalgebras  of $\mathfrak{gl}(m|n)_0$, respectively.
       From the definition of the derived Zuckerman functor, the
       following holds
       for any $\mu\in \Phi^+$
       $$\operatorname{Hom}_{\mathfrak{gl}(m|n)_0}(L^0(\mu),\Gamma^i_{\mathfrak{gl}(m|n)_0} M)\simeq\operatorname{Ext}^i(L^0(\mu),M),$$
       where the extension is taken in the category of modules
       semisimple over $\mathfrak t$. If
       $M=M^0(\lambda)^\vee$, then $M$ is cofree over $U(\mathfrak n^+_0)$ and therefore
       $$\operatorname{Ext}^i(L^0(\mu),M^0(\lambda)^\vee)\simeq
       \operatorname{Hom}_{\mathfrak t}(H_i({\mathfrak n^-_0},L^0(\mu)), \mathbb C_\lambda).$$
       Now we apply Kostant's theorem to conclude that
       $$\Gamma^i_{\mathfrak{gl}(m|n)_0}M^0(\lambda)^\vee=\begin{cases}
       L^0(\mu)\ \  \text{if}\,\, \mu=s\cdot\lambda \,\,\text{for}\,\, s\in
       S_m\times S_n,\ l(s)=i,\\ 0\ \hspace{1cm} \text{otherwise}. \end{cases}$$
     Here $\mu$ is the only dominant weight in $(S_m\times S_n)\cdot\lambda$ and hence $s$ is unique. Moreover, if $\lambda+\rho$ is a singular weight then
 $\Gamma^i_{\mathfrak{gl}(m|n)_0}M^0(\lambda)^\vee=0$ for all $i$.
    Combining this with the Weyl character formula
    $$[L^0(\mu)]=\sum_{s\in S_m\times S_n}\operatorname{sgn}(s) [M^0(s\cdot\mu)]$$
      we obtain (\ref{evenverma}), and hence the proposition.
    \end{proof}
    \begin{cor}\label{fin-dim} We have
      $\bJ_{m|n}=\gamma(\bK_{m|n})$ and
      $\bK_{m|n}=\bJ_{m|n}\oplus\ker\gamma$.  In
      particular, $\bJ_{m|n}$ is an injective hull of $\bW_{m|n}\cong\Lambda^m\bV\otimes\Lambda^n\bV_*$.
      \end{cor}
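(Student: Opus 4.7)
The strategy is to combine Proposition~\ref{Zuckerman}, which gives $\gamma=\sum_{s\in S_m\times S_n}\operatorname{sgn}(s)s$, with Proposition~\ref{end}, which identifies $\operatorname{End}_{\mathfrak{sl}(\infty)}(\bK_{m|n})=\mathbb{C}[S_m\times S_n]$. Then $\gamma$ is $m!n!$ times the sign idempotent $e:=\frac{1}{m!n!}\sum_s\operatorname{sgn}(s)s$, so $\gamma$ is a nonzero scalar multiple of an idempotent; consequently $\bK_{m|n}=\operatorname{im}\gamma\oplus\ker\gamma$. Under the decomposition $\bK_{m|n}\cong\bigoplus_{|\boldsymbol{\lambda}|=m,\,|\boldsymbol{\mu}|=n}\bI^{\boldsymbol{\lambda},\boldsymbol{\mu}}\otimes(Y_{\boldsymbol{\lambda}}\otimes Y_{\boldsymbol{\mu}})$ from Theorem~\ref{thm: T injective}, the sign-isotypic component of the $S_m\times S_n$-module on the right is precisely $\bI^{(1^m),(1^n)}$, since $Y_{\boldsymbol{\lambda}}\otimes Y_{\boldsymbol{\mu}}\cong\operatorname{sgn}\boxtimes\operatorname{sgn}$ only when $(\boldsymbol{\lambda},\boldsymbol{\mu})=((1^m),(1^n))$, and then it is one-dimensional. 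Thus $\operatorname{im}\gamma=\operatorname{im}e\cong\bI^{(1^m),(1^n)}$.

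The next task is to identify $\operatorname{im}\gamma$ with $\bJ_{m|n}$. One inclusion, $\operatorname{im}\gamma\subseteq\bJ_{m|n}$, is immediate from the definition $\gamma([M])=\sum_i(-1)^i[\Gamma^i_{\mathfrak{gl}(m|n)}M]$ and the fact that the Zuckerman functor outputs finite-dimensional modules. For the opposite inclusion, I would verify that $S_m\times S_n$ acts on $\bJ_{m|n}$ via the sign character; then $\bJ_{m|n}$ lies in the sign-isotypic component $\operatorname{im}e=\operatorname{im}\gamma$, giving equality. Concretely, for $\lambda\in\Phi^+$ write $[L(\lambda)]=\sum_\mu a_\mu[M(\mu)]$. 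The identification $[M(\mu)]\leftrightarrow m_\mu$ translates permutation of tensor factors into the $\rho$-shifted action $s\cdot[M(\mu)]=[M(s\cdot\mu)]$, so the claim $s\cdot[L(\lambda)]=\operatorname{sgn}(s)[L(\lambda)]$ reduces to the multiplicity anti-invariance $a_{s\cdot\mu}=\operatorname{sgn}(s)a_\mu$. The latter follows from two ingredients: the $W_0$-invariance of $\operatorname{ch}L(\lambda)$ (since $L(\lambda)$ is finite-dimensional), and the Weyl-denominator identity, which in the super setting reads that $e^{-\rho}\operatorname{ch}U(\mathfrak{n}^-)$ is $W_0$-anti-invariant (the even Weyl denominator supplies the sign, while the odd factor $\prod(1+e^{-\beta})$ is $W_0$-invariant because $W_0$ permutes the odd positive roots among themselves).

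Combining these shows $\bJ_{m|n}=\operatorname{im}\gamma$, which together with the decomposition from the first paragraph yields $\bK_{m|n}=\bJ_{m|n}\oplus\ker\gamma$. For the ``in particular'' clause, $\bJ_{m|n}\cong\bI^{(1^m),(1^n)}$ is by definition an indecomposable injective in $\mathbb{T}_{\fg,\fk}$ with simple socle $\bV^{(1^m),(1^n)}$. Since $\bW_{m|n}\cong\Lambda^m\bV\otimes\Lambda^n\bV_*=\mathbb{S}_{(1^m)}(\bV)\otimes\mathbb{S}_{(1^n)}(\bV_*)$ has the same simple socle by~(\ref{socle of S}), the inclusion $\bW_{m|n}\hookrightarrow\bJ_{m|n}$ is essential, exhibiting $\bJ_{m|n}$ as an injective hull of $\bW_{m|n}$ in $\mathbb{T}_{\fg,\fk}$.

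The main obstacle is the character-theoretic argument in the middle paragraph: one must carefully track the distinction between the usual and $\rho$-shifted (dot) actions of $W_0$ on integral weights, match the paper's conventions for $\rho$, and handle the odd-root contribution to $\operatorname{ch}U(\mathfrak{n}^-)$ in order to confirm the multiplicity anti-invariance $a_{s\cdot\mu}=\operatorname{sgn}(s)a_\mu$ that forces the symmetric group to act by sign on classes of finite-dimensional simples.
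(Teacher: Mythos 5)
Your argument is correct, and it supplies the details that the paper treats as immediate (the corollary is stated without proof, right after Proposition~\ref{Zuckerman}). The skeleton the authors intend is exactly your first paragraph: by Propositions~\ref{end} and~\ref{Zuckerman}, $\gamma=m!\,n!\cdot e$ with $e$ the sign idempotent of $\mathbb{C}[S_m\times S_n]$, so $\bK_{m|n}=\operatorname{im}\gamma\oplus\ker\gamma$ and, by Theorem~\ref{thm: T injective}, $\operatorname{im}\gamma\cong\bI^{(1^m),(1^n)}$. You also correctly observe that $\operatorname{im}\gamma\subseteq\bJ_{m|n}$ is automatic from the definition of $\gamma$, so the only genuinely nontrivial point is the reverse inclusion $\bJ_{m|n}\subseteq\operatorname{im}\gamma$, i.e.\ that $S_m\times S_n$ acts on $\bJ_{m|n}$ by the sign character.

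Your character-theoretic verification of $a_{s\cdot\mu}=\operatorname{sgn}(s)\,a_\mu$ is sound: $\operatorname{ch}L(\lambda)$ is $W_0$-invariant, the odd factor $\prod_{\beta\in\Delta_1^+}(1+e^{-\beta})$ is $W_0$-invariant for the distinguished Borel, and $e^{-\rho_0}\operatorname{ch}U(\mathfrak n^-)$ is $W_0$-anti-invariant; since $\rho-\rho_0$ is $W_0$-invariant the paper's $\rho$-shift and the $\rho_0$-shift give the same dot action, so the conventions match. One small thing worth making explicit (you assume it silently) is that the abstract $S_m\times S_n$-action on $\bK_{m|n}$ from Proposition~\ref{end} restricts on $\bT_{m|n}$ to the standard permutation of tensor factors; this holds because $\bT_{m|n}$ is the maximal tensor submodule of $\bK_{m|n}$ (so any endomorphism preserves it) and $\operatorname{End}_{\fg}(\bT_{m|n})\cong\mathbb{C}[S_m\times S_n]$ is again determined by restriction to $\bP_{m|n}$, by~(\ref{endsym}). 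As an alternative to the character computation, one could note that each $[K(\mu)]$ is manifestly sign-semi-invariant (being the image of $m_\mu$ under antisymmetrization) and that the $[K(\mu)]$ span $\bJ_{m|n}$; however, the latter requires the finiteness of the inverse Kazhdan--Lusztig transition from Brundan's work, so your purely character-theoretic route is the more self-contained of the two.
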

Recall that  $\bW_{m|n}\subset\bJ_{m|n}$
denotes the subspace generated by the classes of all Kac modules.
Let $\mathcal Q_{m|n}$ denote the additive subcategory of $\mathcal
F_{m|n}^{\mathbb{Z}}$  which consists
of projective finite-dimensional $\mathfrak {gl}(m|n)$-modules, and
let $Q_{m|n}$ denote the reduced Grothendieck group of $\mathcal
Q_{m|n}$.    It was proven  in \cite[Theorem 3.11]{CS} that
$\bQ_{m|n}:= Q_{m|n}\otimes_{\mathbb{Z}}\mathbb{C}$ is the socle of
the module $\bW_{m|n}$, implying that $\bQ_{m|n}\cong
\bV^{(m)^\perp,(n)^{\perp}}$, where $\perp$ indicates the conjugate partition.
Corollary \ref{fin-dim} implies  the following.
 \begin{cor}\label{fin-dim2} $\bJ_{m|n}$ is an injective hull of $\bQ_{m|n}$,  and the socle filtration of $\bJ_{m|n}$ is
\[
\overline{\soc}^i \bJ_{m|n}\cong\left(\bV^{\left(m-i\right)^{\perp}\left(n-i\right)^{\perp}}\right)^{\oplus (i+1)}.
\]
      \end{cor}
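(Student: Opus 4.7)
The strategy is to recognise $\bJ_{m|n}$ as an indecomposable injective object whose socle I already know, and then read off its socle filtration from Theorem~\ref{thm:socinj}. First, Corollary~\ref{fin-dim} says that $\bJ_{m|n}$ is an injective hull of $\bW_{m|n}$ in $\mathbb T_{\fg,\fk}$; since an injective hull is an essential extension, $\soc\bJ_{m|n}=\soc\bW_{m|n}$. By the cited \cite[Theorem 3.11]{CS}, $\soc\bW_{m|n}=\bQ_{m|n}\cong\bV^{(m)^\perp,(n)^\perp}=\bV^{(1^m),(1^n)}$, which is simple. Hence $\bJ_{m|n}$ is an injective hull of $\bQ_{m|n}$ and, in the notation of Theorem~\ref{thm:socinj}, $\bJ_{m|n}\cong \bI^{(1^m),(1^n)}$.

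Next, I would specialise Theorem~\ref{thm:socinj} with $\boldsymbol{\lambda}=(1^m)$, $\boldsymbol{\mu}=(1^n)$ and $r=2$, and evaluate the resulting Littlewood--Richardson coefficients. The combinatorial core is the claim that $N^{(1^N)}_{\boldsymbol{\alpha}_1,\dots,\boldsymbol{\alpha}_t}$ equals $1$ when each $\boldsymbol{\alpha}_j=(1^{a_j})$ is a single column with $\sum a_j=N$, and vanishes otherwise. The cleanest route is via the transpose symmetry $N^{\boldsymbol{\lambda}}_{\boldsymbol{\mu},\boldsymbol{\nu}}=N^{\boldsymbol{\lambda}^\perp}_{\boldsymbol{\mu}^\perp,\boldsymbol{\nu}^\perp}$, which reduces the question to the expansion of a product of row Schur functions; iterated Pieri then shows that the coefficient of $s_{(N)}$ in $s_{(a_1)}\cdots s_{(a_t)}$ is $1$ whenever the $a_j$ sum to $N$, and the vanishing of the coefficient of $s_{(N)}$ in $s_{\boldsymbol{\alpha}_1}\cdots s_{\boldsymbol{\alpha}_t}$ as soon as one $\boldsymbol{\alpha}_j$ has more than one row is immediate from the same rule.

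Applying this to both Littlewood--Richardson factors in Theorem~\ref{thm:socinj}, a nonzero contribution to $\overline{\soc}^k\bJ_{m|n}$ forces $\boldsymbol{\gamma}_1=(1^{i_1})$, $\boldsymbol{\gamma}_2=(1^{i_2})$ with $i_1+i_2=k$, and then necessarily $\boldsymbol{\lambda}'=(1^{m-k})=(m-k)^\perp$ and $\boldsymbol{\mu}'=(1^{n-k})=(n-k)^\perp$, with coefficient $1$. Summing over the $k+1$ ordered pairs of nonnegative integers with $i_1+i_2=k$ yields
\[
\overline{\soc}^i \bJ_{m|n}\cong \bigl(\bV^{(m-i)^\perp,(n-i)^\perp}\bigr)^{\oplus (i+1)},
\]
as claimed. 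No step is genuinely an obstacle here: the first part is a one-line consequence of Corollary~\ref{fin-dim} together with \cite{CS}, and the second part is an elementary Pieri-type Littlewood--Richardson calculation; the only point needing a moment of care is the vanishing statement for $N^{(1^N)}_{\boldsymbol{\alpha}_1,\dots,\boldsymbol{\alpha}_t}$ when some $\boldsymbol{\alpha}_j$ fails to be a single column, and this is handled uniformly by the transpose identity.
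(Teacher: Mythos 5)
Your proposal is correct and follows the same route the paper intends: identify $\bJ_{m|n}\cong\bI^{(1^m),(1^n)}$ using Corollary~\ref{fin-dim} together with the cited \cite[Theorem 3.11]{CS}, then specialise Theorem~\ref{thm:socinj} with $r=2$ and evaluate the Littlewood--Richardson coefficients for column shapes. The paper simply asserts that Corollary~\ref{fin-dim} implies the result; your Pieri/transpose computation fills in the combinatorial step that the paper leaves implicit.
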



\subsection{The Duflo--Serganova functor and the tensor filtration}

In this section, we discuss the relationship between the Duflo--Serganova functor and submodules of the $\mathfrak{sl}(\infty)$-modules $\bK_{m|n}$ and $\bJ_{m|n}$.

Let $\fa=\fa_{\bar{0}}\oplus\mathfrak{a}_{\bar{1}}$ be  a finite-dimensional contragredient Lie superalgebra.
For any odd element $x\in\fa_{\bar{1}}$ which satisfies $\left[x,x\right]=0$,  the \emph{Duflo\textendash Serganova functor $DS_{x}$}
is defined by
\begin{align*}
DS_{x}:\ \fa-\operatorname{mod} &\rightarrow\fa_x-\operatorname{mod}  \\ M &\mapsto\mathrm{ker}_{M}x/xM,
\end{align*}
where $\mathrm{ker}_{M}x/xM$ is a module over the Lie superalgebra $\fa_{x}:=\fa^{x}/[x,\fa]$ (here $\fa^{x} $ denotes the centralizer of $x$ in $\fa$) \cite{DS}.
In what follows we set
$$M_x:=DS_{x}(M).$$
The Duflo--Serganova functor $DS_{x}$ is a  symmetric monoidal functor,  \cite{DS}, see also Proposition 5 in \cite{Slec}.

It is known that the functor $DS$ is not exact, nevertheless it induces  a homomorphism $ds_x$ between the reduced Grothendieck groups of the categories
$\fa$-mod and $\fa_x$-mod defined by $ds_x([M])=[M_x]$. (Recall that "reduced" indicates passage to the quotient  by the relation $[\Pi M]=-[M]$, where $\Pi$ is the parity reversing functor.)
This follows from the following statement, see Section 1.1 in \cite{GS}.
\begin{lem}\label{reduced}
For every exact sequence of $\fa$-modules
$$0\to M_1\xrightarrow{\psi} M_2\xrightarrow{\varphi} M_3\to 0$$
there exists an exact sequence of $\fa_x$-modules
$$0 \to E\to DS_x(M_1)\xrightarrow{DS_x(\psi)} DS_x(M_2)\xrightarrow{DS_x(\varphi)} DS_x(M_3)\to\Pi E\to 0,$$
for an appropriate $\fa_x$-module $E$.
\end{lem}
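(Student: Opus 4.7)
The plan is to realize $DS_x$ as the cohomology functor of a two-term $\mathbb Z_2$-graded complex and then invoke the standard hexagonal long exact sequence. Since $[x,x]=0$ implies $x^{2}=0$ in $U(\fa)$, the odd action of $x$ on any $\fa$-module $M$ yields two maps $x_{\bar 0}\colon M_{\bar 0}\to M_{\bar 1}$ and $x_{\bar 1}\colon M_{\bar 1}\to M_{\bar 0}$ with $x_{\bar 0}x_{\bar 1}=x_{\bar 1}x_{\bar 0}=0$. Thus $M$ becomes a two-term $\mathbb Z_2$-graded complex whose cohomology is exactly $DS_x(M)=\ker x/xM$, with even part $\ker x_{\bar 0}/\operatorname{im} x_{\bar 1}$ and odd part $\ker x_{\bar 1}/\operatorname{im} x_{\bar 0}$.

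Next, I would apply the snake lemma to the short exact sequence $0\to M_1\to M_2\to M_3\to 0$ viewed as a short exact sequence of two-term $\mathbb Z_2$-graded complexes, and iterate once to close the cycle. Because the grading is of order two, this yields the cyclic six-term long exact sequence of $\fa_x$-modules
$$
DS_x(M_1)\xrightarrow{DS_x(\psi)}DS_x(M_2)\xrightarrow{DS_x(\varphi)}DS_x(M_3)\xrightarrow{\delta}\Pi DS_x(M_1)\xrightarrow{\Pi DS_x(\psi)}\Pi DS_x(M_2)\xrightarrow{\Pi DS_x(\varphi)}\Pi DS_x(M_3)\xrightarrow{\delta}DS_x(M_1).
$$
The $\fa_x$-equivariance of the snake-lemma connecting map $\delta$ is a consequence of naturality together with the observations that $\fa^{x}$ commutes with $x$ and that $[x,\fa]M\subset xM$ acts trivially on $\ker x/xM$.

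Finally, I would set $E:=\ker(DS_x(\psi))$. Exactness at $DS_x(M_1)$ identifies $E$ with the image of $\delta\colon\Pi DS_x(M_3)\to DS_x(M_1)$; exactness at $DS_x(M_3)$ identifies $\operatorname{coker}(DS_x(\varphi))$ with the image of $\delta\colon DS_x(M_3)\to\Pi DS_x(M_1)$; and exactness at $\Pi DS_x(M_1)$ identifies that image in turn with $\ker(\Pi DS_x(\psi))=\Pi E$. Splicing these identifications produces the desired five-term exact sequence $0\to E\to DS_x(M_1)\to DS_x(M_2)\to DS_x(M_3)\to\Pi E\to 0$. The only step demanding genuine care, rather than a real obstacle, is tracking the $\fa_x$-module structure through the hexagon and respecting the parity shift; everything else is formal homological algebra for $\mathbb Z_2$-graded two-term complexes.
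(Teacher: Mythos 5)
Your proof is correct and follows essentially the same route as the paper: both view $DS_x$ as cohomology of a two-term $\mathbb{Z}_2$-graded complex with odd differential $x$, apply the snake lemma to obtain a cyclic six-term exact sequence with a parity-shifting connecting map, and splice to identify $\operatorname{coker}(DS_x(\varphi))$ with $\Pi\ker(DS_x(\psi))$. The paper's version is terser — it simply names the connecting map $\psi^{-1}x\varphi^{-1}$ and asserts the five-term exactness — so your explicit appeal to the hexagon (which supplies exactness at $DS_x(M_2)$, not automatic from the definitions of $E$ and $E'$ alone) and your remark on $\fa_x$-equivariance of $\delta$ fill in details the paper leaves implicit.
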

\begin{proof}  Set $E:=\operatorname{Ker}(DS_x(\psi))$, $E':=\operatorname{Coker}(DS_x(\varphi))$, and consider the exact sequence
$$0\to E\to DS_x(M_1)\to DS_x(M_2)\to DS_x(M_3)\to E'\to 0.$$
The odd morphism $\psi^{-1}x\varphi^{-1}:DS_x(M_3)\to DS_x(M_1)$ induces an isomorphism $E'\to\Pi E$.
  \end{proof}

In \cite{HR} the existence of the homomorphism $ds_x$ was proven for finite-dimensional modules.
\begin{rem} If $0\to C_1\to\dots\to C_k\to 0$ is a complex of $\fa$-modules with odd differentials, the Euler characteristic of this complex is defined as the element  $\sum_{i=1}^k [C_i]$ in the reduced Grothendieck group. If $H_i$ denotes the $i$-th cohomology group, then
  $$\sum_{i=1}^k[C_i]=\sum_{i=1}^k[H_i].$$
  The absence of the usual sign follows from the relation $[\Pi M]=-[M]$ and the fact that the differentials are odd. For example, for an acyclic complex
  $0\to X\to\Pi X\to 0$ the Euler characteristic is zero.
  \end{rem}
Let $\fa=\mathfrak{gl}(m|n)$ and suppose $\operatorname{rank}x=k$.  Then $\fa_x\cong \mathfrak{gl}(m-k|n-k)$.
Let  $\mathcal{O}^{ind}_{m|n}$ be the category whose objects are direct limits of objects in $\mathcal{O}_{m|n}$. Then by Lemma 5.2 in \cite{CS} the restriction
of $DS_x$ to $\mathcal{O}_{m|n}$ is a well-defined functor
$$DS_{x}:\ \mathcal{O}_{m|n} \rightarrow\mathcal{O}^{ind}_{m-k|n-k}.$$

\begin{lem}\label{lem:commute} The functor $DS_{x}:\ \mathcal{O}^{\mathbb Z}_{m|n} \rightarrow(\mathcal{O}^{\mathbb Z}_{m-k|n-k})^{ind}$ commutes with translation functors.
\end{lem}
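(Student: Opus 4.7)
The plan is to combine the symmetric monoidal structure of $DS_x$ (from \cite{DS,Slec}) with the definition of $\mathrm{E}_i$ and $\mathrm{F}_i$ as generalized $i$-eigenspaces of the operator $\Omega$ acting on $M\otimes U^*$ and $M\otimes U$. The monoidal structure supplies natural isomorphisms $DS_x(M\otimes N)\cong DS_x(M)\otimes DS_x(N)$, and a direct computation on the natural module shows $DS_x(U)\cong U'$ and $DS_x(U^*)\cong (U')^*$, where $U'$ is the natural $\fg_x=\mathfrak{gl}(m-k|n-k)$-module (e.g.\ putting $x$ in its standard form $x=E_{1,m+1}+\dots+E_{k,m+k}$ and computing $\ker_U x/xU$ directly). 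Thus $DS_x(M\otimes U)\cong DS_x(M)\otimes U'$ and analogously for $U^*$, and the problem reduces to showing that under these isomorphisms the endomorphism $\Omega$ descends to the analogous endomorphism $\Omega'$ built from dual bases of $\fg_x$.

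The principal step is the descent $\Omega\rightsquigarrow\Omega'$. First observe that the supertrace form on $\fg$ induces a nondegenerate invariant form on $\fg_x$: invariance gives $\operatorname{str}([x,z]w)=-(-1)^{p(z)}\operatorname{str}(z[x,w])=0$ for $w\in\fg^x$, so $[x,\fg]\perp\fg^x$, and since $[x,[x,\fg]]=0$ (by the graded Jacobi identity and $x^2=0$) the subspace $[x,\fg]$ is isotropic and coincides with $(\fg^x)^\perp$. Hence $\operatorname{str}$ descends to a nondegenerate form on $\fg_x=\fg^x/[x,\fg]$. Choose a graded decomposition $\fg^x=A\oplus[x,\fg]$ with $A\cong\fg_x$, a graded complement $V$ to $\fg^x$ in $\fg$ lying in $A^\perp$ (so $V$ and $[x,\fg]$ are perfectly paired under $\operatorname{str}$), and dual bases of $A$ with respect to the induced form on $\fg_x$; extend to dual bases $\{X_j\},\{Y_j\}$ of $\fg$ by supplementing with dually paired bases of $V$ and $[x,\fg]$. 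For a representative $v\otimes u$ of a class in $DS_x(M\otimes U)$ with $xv=0=xu$, the defining sum $\Omega(v\otimes u)=\sum_j(-1)^{p(X_j)(p(v)+1)}X_j v\otimes Y_j u$ splits according to which of $A$, $V$, $[x,\fg]$ contains $X_j$ or $Y_j$; using the identities $[x,X']v=x(X' v)$ and $[x,Y']u=x(Y' u)$, the summands with $X_j\in[x,\fg]$ or $Y_j\in[x,\fg]$ are shown to lie in $x(M\otimes U)$ up to correction terms that cancel by invariance of $\operatorname{str}$ and the orthogonality $V\perp A$. Only the $A\otimes A$-indexed terms survive, and these assemble into $\Omega'(v\otimes u)\in DS_x(M)\otimes U'$.

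Since $\Omega$ and $\Omega'$ have the same integer spectrum by \cite{BLW}, the descent identification restricts to an isomorphism of generalized $i$-eigenspaces, giving $DS_x(\mathrm{E}_i M)\cong\mathrm{E}_i(DS_x M)$ and $DS_x(\mathrm{F}_i M)\cong\mathrm{F}_i(DS_x M)$ naturally in $M\in\mathcal{O}_{m|n}^{\mathbb{Z}}$. The hard part is the cross-term verification in the second step: one must check that the contributions indexed by $V$ and $[x,\fg]$ cancel modulo $x(M\otimes U)$, which requires careful bookkeeping of the super signs and exploits the perfect pairing between $V$ and $[x,\fg]$ under the supertrace form together with the vanishing $x\cdot(v\otimes u)=0$.
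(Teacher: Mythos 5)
Your approach shares the starting point with the paper (monoidality of $DS_x$ and the identifications $U_x\cong U'$, $(U^*)_x\cong (U')^*$), but diverges on how the operator descent $\Omega\rightsquigarrow\Omega'$ is established. The paper avoids all basis bookkeeping by factoring $\Omega$ as a composition of $\mathfrak{gl}(m|n)$-module morphisms
$M\otimes U\xrightarrow{1\otimes\omega_{m|n}\otimes 1}M\otimes\fg\otimes\fg\otimes U\xrightarrow{r_M\otimes l_U}M\otimes U$
and then applying the symmetric monoidal functor $DS_x$ to each arrow. Functoriality reduces the whole lemma to a single identity inside the adjoint module, namely $DS_x(\omega_{m|n})=\omega_{m-k|n-k}$, which is a computation independent of $M$ and $U$. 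You instead set up an adapted graded decomposition $\fg=A\oplus V\oplus[x,\fg]$ (with $A\cong\fg_x$, $V\perp A$, and $V$ dually paired with $[x,\fg]$) and try to verify the descent of $\Omega$ directly on representatives $v\otimes u$ with $xv=xu=0$. This is a legitimate alternative route, and your preliminary observations (nondegeneracy of $\operatorname{str}$ on $\fg_x$, the identity $[x,X']v=x(X'v)$ for $xv=0$) are correct.

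The gap is in the step you yourself flag as the hard one. You assert that the summands indexed by $V$ and $[x,\fg]$ "lie in $x(M\otimes U)$ up to correction terms that cancel by invariance of $\operatorname{str}$ and the orthogonality $V\perp A$," but this is not how the cancellation actually works, and orthogonality alone does not give it. Writing $\omega_V=\sum_b V_b\otimes C^b$ with $C^b=[x,D^b]$ and $\omega_{[x,\fg]}=\sum_b C_b\otimes V^b$ with $C_b=[x,D_b]$, one finds that modulo $x(M\otimes U)$ each piece contributes a nonzero residual term (of the shape $[x,V_b]v\otimes D^bu$, resp.\ $D_bv\otimes[x,V^b]u$), and these residuals live in genuinely different subspaces (``$[x,\fg]\otimes V$'' versus ``$V\otimes[x,\fg]$'' type), so they cannot cancel termwise against each other. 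What is actually needed is to exhibit a specific element $\zeta\in V\otimes V$ with $\operatorname{ad}_x\zeta=\omega_V+\omega_{[x,\fg]}$, so that the entire cross-contribution is a coboundary. Such a $\zeta$ exists (it is essentially the symmetrization of $\sum_b V_b\otimes D^b$), but producing it and checking the super signs is precisely the nontrivial content, and it is not supplied by invariance of $\operatorname{str}$ plus $V\perp A$. In this sense the key computation is missing rather than merely sketched. The paper's formulation isolates exactly this computation in the clean, $M$-independent statement $DS_x(\omega_{m|n})=\omega_{m-k|n-k}$, which is the cleanest place to carry it out; if you want to complete your version, it would be best to first prove that identity in $DS_x(\fg\otimes\fg)\cong\fg_x\otimes\fg_x$ and then deduce the descent of $\Omega$ by functoriality, as the paper does, rather than doing the cancellation inside $M\otimes U$.

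One smaller point: you invoke ``$\Omega$ and $\Omega'$ have the same integer spectrum by \cite{BLW}'' to pass to generalized eigenspaces. What is actually needed, and what the descent gives once established, is that the identification $DS_x(M\otimes U)\cong M_x\otimes U_x$ intertwines $\Omega$ with $\Omega'$; the decomposition into generalized eigenspaces with integer eigenvalues on each side is a separate input from \cite{BLW}, and the intertwining then identifies the corresponding eigenspaces. The phrasing ``same spectrum'' is slightly off, though not a substantive error.
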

\begin{proof} Recall that $U$ is the natural  $\mathfrak{gl}\left(m|n\right)$-module. Since $DS$ is a monoidal functor, we have a canonical isomorphism
  $$(M\otimes U)_x\simeq M_x\otimes U_x.$$
  Moreover, a direct computation shows that $U_x$ is isomorphic to the natural $\mathfrak{gl}(m-k|n-k)$-module. We will use these observations to show that there is
  a canonical isomorphism
  \begin{equation}\label{goal1}
   \mathrm{E}_i(M_x)\simeq (\mathrm{E}_i(M))_x.
\end{equation}

   Recall the notations of Section 3.1. Define the homomorphism of $\mathfrak{gl}(m|n)$-modules
  $$\omega_{m|n}:\mathbb C\to\mathfrak{gl}(m|n)\otimes \mathfrak{gl}(m|n), \quad 1\mapsto\sum (-1)^{p(X_j)}X_j\otimes Y_j.$$
  We have $DS_x(\omega_{m|n})=\omega_{m-k|n-k}$. Consider the composition
  $$\Omega: M\otimes U\xrightarrow{1\otimes \omega_{m|n}\otimes 1}M\otimes \mathfrak{gl}(m|n)\otimes \mathfrak{gl}(m|n)\otimes U
  \xrightarrow{r_M\otimes l_U}M\otimes U,
  $$
  where $r_M :M\otimes \mathfrak{gl}(m|n)\to M$ is the morphism of right action, and  $l_U:\mathfrak{gl}(m|n)\otimes U\to U$ is the morphism of left action.
  The morphism $DS_x(\Omega):M_x\otimes U_x\to M_x\otimes U_x$ is defined in a similar manner in the category of $\mathfrak{gl}(m-k|n-k)$-modules.
  Recall that
  $$\mathrm{E}_i(M)=\{v\in M\otimes U\,|\,(\Omega-i)^Nv=0\quad\text{for some}\,\,N>0\};$$
  similarly
  $$\mathrm{E_i}(M_x)=\{v\in M_x\otimes U_x\,|\,(DS_x(\Omega)-i)^Nv=0\quad\text{for some}\,\,N>0\}.$$
  This implies the existence of the isomorphism (\ref{goal1}) as desired.

  The proof for $\mathrm{F}_i$ is similar.
  \end{proof}

We are going to strengthen the result of \cite{CS} by proving the following proposition.
\begin{prop}\label{DSO} The restriction of $DS_x$ to $\mathcal{O}_{m|n}$  is a well-defined functor
  $$DS_{x}:\ \mathcal{O}_{m|n} \rightarrow\mathcal{O}_{m-k|n-k}.$$
\end{prop}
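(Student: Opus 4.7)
The plan is to reduce, by means of a finite composition series together with Lemma \ref{reduced}, to the case of a simple module $L(\lambda)$, and then to show that $DS_x(L(\lambda))$ is of finite length in $\mathcal O_{m-k|n-k}^{\mathbb Z}$. For the reduction, an arbitrary $M\in\mathcal O_{m|n}^{\mathbb Z}$ has finite length, and iterated application of Lemma \ref{reduced} to the short exact sequences of a composition series exhibits $DS_x(M)$ as an iterated extension of finite-length objects together with error terms $E,\Pi E$ which are themselves subquotients of finite-length modules. Such iterated extensions remain of finite length.

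For the simple case, I would combine two ingredients. First, Lemma \ref{lem:commute} asserts that $DS_x$ commutes with the translation functors $\mathrm{E}_i, \mathrm{F}_i$; consequently the induced operator $ds_x$ on reduced Grothendieck groups intertwines the $\mathfrak{sl}(\infty)$-actions, and the image of $\bK_{m|n}$ under $ds_x$ is an $\mathfrak{sl}(\infty)$-submodule of the reduced Grothendieck group of $\mathcal O^{ind}_{m-k|n-k}$. Second, Lemma \ref{lem-annihilator} states that $\bK_{m|n}$ satisfies the large annihilator condition over $\fk$; since this property is manifestly preserved by $\mathfrak{sl}(\infty)$-equivariant maps, $ds_x([L(\lambda)])$ inherits it and must therefore lie in the corresponding $\Gamma_{\fg,\fk}$-submodule, which is precisely $\bK_{m-k|n-k}$. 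In particular, only finitely many simple classes $[L(\mu)]$ appear with nonzero coefficient in the reduced class of $DS_x(L(\lambda))$.

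The final step is to upgrade this Grothendieck-theoretic finiteness to finite length of $DS_x(L(\lambda))$ as a genuine module. For this I would decompose with respect to the central characters of $\mathfrak{gl}(m-k|n-k)$ occurring in $DS_x(L(\lambda))$, observing that they must all arise from the central character of $L(\lambda)$ via the natural map $Z(\mathfrak{gl}(m|n))\to Z(\mathfrak{gl}(m-k|n-k))$ induced by $DS_x$, so that $DS_x(L(\lambda))$ is supported on only finitely many blocks of $\mathcal O^{ind}_{m-k|n-k}$. Local finiteness of the positive nilradical on $DS_x(L(\lambda))$ is inherited from $L(\lambda)$, and the weight support above is controlled by $\bar\lambda$; combining this with the bound on the reduced Grothendieck class block-by-block yields finite generation, hence finite length.

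The main obstacle I anticipate is the last step: passing from the statement that the reduced class of $DS_x(L(\lambda))$ lies in $\bK_{m-k|n-k}$ to the stronger statement that the module itself has finite length in $\mathcal O_{m-k|n-k}^{\mathbb Z}$. A priori, an object of $\mathcal O^{ind}$ could realize a given finite-support class with some simple appearing infinitely often; ruling this out is precisely what requires the simultaneous control over blocks and over the integrability and large-annihilator properties inherited from $\bK_{m|n}$, and this is where the structural results on $\mathbb T_{\fg,\fk}$ from Section \ref{sl inf} would be indispensable.
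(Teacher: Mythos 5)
Your reduction to the simple case via Lemma~\ref{reduced} is fine, and your use of Lemma~\ref{lem:commute} to control which translation functors act nontrivially on $DS_x(L(\lambda))$ is exactly the right idea and is also used in the paper. But there is a genuine gap, which you partly notice and do not resolve, and there is also a circularity that you do not notice.

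The circularity: you want to argue that the class of $DS_x(L(\lambda))$ lies in the $\Gamma_{\fg,\fk}$-submodule of the reduced Grothendieck group of $\mathcal{O}^{ind}_{m-k|n-k}$, and then use that to deduce finiteness. But for an object of $\mathcal{O}^{ind}$ of a priori unknown length, the reduced Grothendieck class is not a well-defined finite expression in the basis $\{[L(\mu)]\}$ --- and saying that $ds_x$ is an $\mathfrak{sl}(\infty)$-equivariant map landing in $\bK_{m-k|n-k}$ already presupposes the finiteness you are trying to prove. Equivariance of $ds_x$ at the level of $\bK_{m|n}\to\bK_{m-k|n-k}$ is something the paper establishes \emph{after} Proposition~\ref{DSO}, not before.

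The gap you do notice is the crucial one: translation functors only tell you that \emph{finitely many isomorphism types} of simple modules can appear in $DS_x(L(\lambda))$; they cannot by themselves exclude a single simple occurring with infinite multiplicity. Central-character decomposition suffers from the same limitation. The paper closes this gap in Lemma~\ref{lem:rank1} by a direct weight-multiplicity computation: choosing a Borel so that the root $\alpha$ attached to $x$ is simple, using $\mathfrak{sl}(1|1)$-theory to see that only weights $\mu$ with $(\mu,\alpha)=0$ can contribute to $M_x$, and then using the structure of highest weight modules to show that $(\mu+\mathbb{C}\alpha)\cap\operatorname{supp}M$ is finite, hence $M_x$ has finite weight multiplicities with respect to $\fh_x$. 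Combined with the translation-functor bound on which simples occur, this yields finite length. Your speculation that ``the structural results on $\mathbb T_{\fg,\fk}$ would be indispensable'' here is not how the paper proceeds; those results come in later (Theorem~\ref{thm: T injective}), and they rely on this proposition, not the other way around.

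Finally, your proposal omits the induction on $\operatorname{rank}(x)=k$. The paper first proves the rank-one case (Lemma~\ref{lem:rank1}) and then handles general $k$ by writing $x=y+x_k$, forming the bicomplex with differentials $y$ and $x_k$, and running the spectral sequence $E_2^{p,q}=H^p(x_k,H^q(y,M))$; the inductive hypothesis bounds both the $q$- and $p$-directions, and $M_x$ is a subquotient of $\bigoplus_{p,q}E_2^{p,q}(M)$. Without some such rank-reduction device, your argument --- even if repaired at rank one --- would not directly yield the general statement.
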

To prove the proposition we first consider the case when $k=1$.
\begin{lem}\label{lem:rank1} If $k=1$, then  the restriction of $DS_x$ to  $\mathcal{O}_{m|n}$ is a well-defined functor
  $$DS_{x}:\ \mathcal{O}_{m|n} \rightarrow\mathcal{O}_{m-1|n-1}.$$
\end{lem}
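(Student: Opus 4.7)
The plan is to upgrade the conclusion from $DS_x(M)\in\mathcal{O}^{ind}_{m-1|n-1}$, which is already established by Lemma 5.2 of \cite{CS}, to $DS_x(M)\in\mathcal{O}_{m-1|n-1}$. The only missing property is finite generation of $DS_x(M)$ as a $\mathfrak{gl}(m-1|n-1)$-module, since the weight-space decomposition with finite-dimensional weight spaces, local $\mathfrak{n}^+$-finiteness, and the boundedness of weights from above are all inherited by $DS_x(M)$ from $M$ via the natural surjection of Cartans $\fh\twoheadrightarrow\fh_x$ induced by $\fa^x\twoheadrightarrow\fa_x$.

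The first step is to reduce to the case of simple $M$ by induction on the length of $M$ in $\mathcal{O}_{m|n}$, which is a finite-length category. For a short exact sequence $0\to N\to M\to L\to 0$ with $L$ simple, Lemma \ref{reduced} yields the six-term exact sequence
$$0\to E\to DS_x(N)\to DS_x(M)\to DS_x(L)\to \Pi E\to 0,$$
with $E$ a subquotient of $DS_x(N)$. Under the inductive assumption that $DS_x(N)$ and $DS_x(L)$ both belong to $\mathcal{O}_{m-1|n-1}$, the module $E$ does as well, and hence so does $DS_x(M)$, since $\mathcal{O}_{m-1|n-1}$ is closed under subquotients and extensions.

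The base case $M=L(\lambda)$ will be handled via highest-weight analysis. Fixing a conjugacy-class representative, say $x=E_{m,m+1}$, one identifies $\fa_x$ explicitly with $\mathfrak{gl}(m-1|n-1)$. Since $L(\lambda)$ has a unique maximal weight $\lambda$ with a one-dimensional highest weight space, the projection of $\lambda$ to $\fh_x^*$ dominates every weight of $DS_x(L(\lambda))$. Combined with local $\mathfrak{n}^+_{m-1|n-1,\bar 0}$-nilpotence inherited from $L(\lambda)$, this should yield that $DS_x(L(\lambda))$ is generated by its finitely many $\mathfrak{gl}(m-1|n-1)_{\bar 0}$-highest-weight vectors. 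The main obstacle will be making this last step rigorous, as one needs a genuinely finite collection of highest $\fa_x$-weights; this may require a direct PBW-style computation of $\Ker_L x$ and $xL(\lambda)$, or the invocation of known structural results identifying $DS_x$ of simples in category $\mathcal{O}$ with finite direct sums of simples.
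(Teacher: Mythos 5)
There is a genuine gap in your proposal, located at the very first step. You assert that ``the weight-space decomposition with finite-dimensional weight spaces \ldots are all inherited by $DS_x(M)$ from $M$.'' This is false, and establishing exactly this point is the heart of the paper's proof. The Cartan subalgebra $\fh_x=\Ker\varepsilon_i\cap\Ker\delta_j$ has codimension two in $\fh$, so the restriction map $\fh^*\to\fh_x^*$ has two-dimensional fibers. A given $\fh_x$-weight space of $M_x$ is therefore the direct sum $\bigoplus_{\mu|_{\fh_x}=\nu}M_x^\mu$, and even though each $\fh$-weight space $M_x^\mu$ is finite-dimensional (being a subquotient of $M^\mu$), the sum a priori runs over infinitely many $\mu$. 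Finite-dimensionality of the $\fh_x$-weight spaces of $M_x$ is not automatic; the paper proves it by choosing a Borel in which $\alpha$ is simple, observing via $\mathfrak{sl}(1|1)$-representation theory that $M_x^\mu\neq 0$ forces $(\mu,\alpha)=0$, and then showing that for a highest weight module the set of $\mu\in\operatorname{supp}M$ with $\mu|_{\fh_x}=\nu$ and $(\mu,\alpha)=0$ is finite.

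Two further comments. First, the paper reduces to highest weight modules via the finite filtration of any object of $\mathcal O_{m|n}$ by highest weight modules, rather than reducing to simples via the six-term sequence of Lemma~\ref{reduced}; your reduction is also valid in principle, but it does not help with the weight-multiplicity issue, which must be settled before finite generation can even be discussed. Second, for the passage from finite weight multiplicities to finite length the paper does not invoke structural results about $DS_x$ of simples; it uses Lemma~\ref{lem:commute} (commutation of $DS_x$ with translation functors) to deduce that $\mathrm{E}_i(M_x)=\mathrm{F}_i(M_x)=0$ for almost all $i$, hence only finitely many simples $L_{\fg_x}(\lambda)$ can occur, each with finite multiplicity. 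Your sketch of the base case (``finitely many highest weight vectors'') points in a similar direction but, as you yourself note, is not carried out; the translation-functor argument is the concrete mechanism that makes it work.
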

\begin{proof} By Theorem 5.1 in \cite{CS} we may assume without loss of generality that $x$ is a generator of the root space $\mathfrak{gl}(m|n)_{\alpha}$ for some $\alpha=\pm(\varepsilon_i-\delta_j)$.
  Moreover, we can choose a Borel subalgebra $\mathfrak{b}\subset\mathfrak{gl}(m|n)$ so that $\alpha$ is a simple root. Let $M$ be an object in the category
  $\mathcal{O}_{m|n}$ and $M^\mu$ denote the weight space of weight $\mu$. The set of all weights of $M$ is denoted by $\operatorname{supp}M$.
  Let $x_{\mu}:M^\mu\to M^{\mu+\alpha}$ be the restriction of $x$ as an operator on $M$. Then
  $$M_x=\oplus_{\mu\in\operatorname{supp}M}M^{\mu}_x\quad\text{where}\quad M^\mu_x=\Ker x_\mu/x_{\mu-\alpha}(M^{\mu-\alpha}).$$
  Let us first check that all weight multiplicities of $M_x$ are finite with respect to the Cartan subalgebra
  $\fh_x:=\Ker\varepsilon_i\cap\Ker\delta_j$ of $\fg_x$. We have to show that for any
  $\nu\in \fh^*_x$
  \begin{equation}\label{goal}
   \sum_{\mu\in\operatorname{supp}M,\mu|_{\fh_x}=\nu}\dim  M^\mu_x<\infty.
    \end{equation}

 Note that $\dim M_{x}^\mu\neq 0$ implies $(\mu,\alpha)=0$, by $\mathfrak{sl}(1|1)$-representation theory.  If $(\mu',\alpha')=0$ and $\mu|_{\fh_x}=\mu'|_{\fh_x}$, then
  $\mu-\mu'\in\mathbb C\alpha$. Denote by $\Delta_s$ the set of simple roots of $\mathfrak{b}$.
  Since $M$ is an object of  $\mathcal{O}_{m|n}$, $M$ has a finite filtration by highest weight modules. Therefore it suffices to consider the case when $M$ is
  a highest weight module.
  Let $\lambda$ be the highest weight of $M$. Then every $\mu\in\operatorname{supp}M$
  has the form
  $\lambda-\sum_{\beta\in\Delta_s}k_\beta\beta$ for some $k_\beta\in\mathbb Z_{\geq 0}$ satisfying $k_\alpha\leq 1+\sum_{\beta\in\Delta_S\setminus\alpha}k_\beta$.
 Therefore, for any $\mu\in\operatorname{supp}M$ the set $(\mu+\mathbb C\alpha)\cap\operatorname{supp}M$ is finite.
 Hence, for any $\nu\in  \fh^*_x $ the set of $\mu\in\operatorname{supp}M$ such that
 $\mu|_{\fh_x}=\nu$ and $(\mu,\alpha)=0$ is finite. Since all weight spaces of $M$ are finite dimensional, this implies (\ref{goal}).

To finish the proof we observe that Lemma \ref{lem:commute} implies $\mathrm{E}_i(M_x)=\mathrm{F}_i(M_x)=0$ for almost all $i\in\mathbb Z$. Now for each $i\in\operatorname{supp}(\bar\lambda)$ , at least one of the $\mathrm{E}_i,\mathrm{E}_{i+1},\mathrm{F}_i,\mathrm{F}_{i+1}$
 does not annihilate $L_{\fg_x}(\lambda)$. Together this implies that the set $S_M$ of all weights $\lambda$  satisfying $[M_x:L_{\fg_x}(\lambda)]\neq 0$ is a finite set. On the other hand, since $M_x$ has finite weight multiplicities, every simple constituent occurs in $M_x$
 with finite multiplicity. Hence  $M_x$ has finite length.
\end{proof}
  \begin{proof} Now we prove Proposition \ref{DSO} by induction on $\operatorname{rank}(x)=k$. By Theorem 5.1 in \cite{CS},  $x$ is $B_0$-conjugate to $x_1+\dots+x_k$, where $x_i\in\mathfrak{gl}(m|n)_{\alpha_i}$ for
  some linearly independent set of mutually orthogonal odd roots $\beta_1,\dots,\beta_k$. So without loss of generality we may suppose that
  $x=x_1+\dots+x_k$. Let $y=x_1+\dots+x_{k-1}$. Choose  $h_y\in \fh_{x_k}$ and $h_{x_k}\in \fh_y$  such that  $\alpha(h_y),\alpha(h_{x_k})\in \mathbb Z$ for all roots $\alpha$ of $\mathfrak{gl}(m|n)$,
  $[h_y,y]=y$ and $[h_{x_k},x_k]=x_k$. Assume that $M\in\mathcal O_{m|n}$ and $\operatorname{supp}M\in\lambda +Q$, where $Q$ is the root lattice. Then $\operatorname{ad}h_y-\lambda(h_y)$ and $\operatorname{ad}h_{x_k}-\lambda(h_{x_k})$ define a
  $\mathbb Z\times\mathbb Z$-grading on $M$ and the differentials $y$ and $x_k$ form a bicomplex. Moreover, $M_x$ is nothing but the cohomology
  $\bigoplus_r H^r(y+x_k,M)$ of the total complex.

  Consider the second term
  $$E_2^{p,q}(M)=H^p(x_k,H^q(y,M))$$
  of the spectral sequence of this bicomplex.
  By the induction assumption $M_y\in \mathcal O_{m-k+1|n-k+1}$, and in particular, $H^q(y,M)\neq 0$ for finitely many $q$.
  The induction assumption implies that $H^p  (x_k,H^q(y,M))\in\mathcal O_{m-k|n-k}$ does not vanish
  for finitely many $p$. This yields $\bigoplus_{p,q} E_2^{p,q}(M)\in \mathcal O_{m-k|n-k}$. Since $\bigoplus_r H^r(y+x_k,M)$ is a subquotient of
  $\bigoplus_{p,q} E_2^{p,q}(M)$, we obtain
  $$M_x=\bigoplus_r H^r(y+x_k,M)\in\mathcal{O}_{m-k|n-k}.$$
\end{proof}
Next note that the restriction of $DS_x$ to $\mathcal O^{\mathbb Z}_{m|n}$ is a well-defined functor
  $$\mathcal O^{\mathbb Z}_{m|n}\to \mathcal O^{\mathbb Z}_{m-k|n-k}.$$

Since $DS_{x}$ is a well-defined functor from $\mathcal {O}_{m|n}^{\mathbb Z} $ to $\mathcal {O}_{m-k|n-k}^{\mathbb Z} $ we see that  $ds_x:K_{m|n}\rightarrow K_{m-k|n-k}$ is a well-defined group homomorphism.

  \begin{lem}\label{lem:fin-dim} If $x=x_1+\dots+x_k$ with commuting $x_1,\dots,x_k$ of rank $1$, then on $K_{m|n}$ we have the identity
  $$ds_x=ds_{x_k}\circ\dots\circ ds_{x_1}.$$
\end{lem}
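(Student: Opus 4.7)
The plan is to argue by induction on $k$. The base case $k=1$ is trivial. For the inductive step, set $y := x_1 + \dots + x_{k-1}$, so that $x = y + x_k$ with $y^2 = x_k^2 = 0$ and $[y, x_k] = 0$. By the inductive hypothesis $ds_y = ds_{x_{k-1}} \circ \dots \circ ds_{x_1}$, so it suffices to prove the single identity $ds_{y + x_k} = ds_{x_k} \circ ds_y$ on $K_{m|n}$.

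Fix $M \in \mathcal{O}_{m|n}^{\mathbb{Z}}$. Following the setup of the proof of Proposition \ref{DSO}, choose $h_y, h_{x_k}$ in the Cartan subalgebra giving a $\mathbb{Z} \times \mathbb{Z}$-grading on $M$ for which $y$ and $x_k$ become odd operators raising the two gradings respectively; since $y$ and $x_k$ commute in $\mathfrak{gl}(m|n)$ they anticommute as operators, so $(M, y, x_k)$ is an honest bicomplex whose total differential is $y + x_k$ and whose total cohomology is $M_x$. Consider the spectral sequence obtained by filtering, say, by $h_y$-degree first: its $E_1$-page is $DS_y(M)$, equipped with the residual operator induced by $x_k$. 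This residual operator is precisely the image of $x_k$ in $\mathfrak{gl}(m|n)_y \cong \mathfrak{gl}(m-k+1|n-k+1)$, so $E_2 = DS_{x_k}(DS_y(M))$.

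In the reduced Grothendieck group, each transition $E_r \rightsquigarrow E_{r+1}$ is passage to cohomology with respect to the odd differential $d_r$ (which raises the total degree by one and is built from the odd operators $y, x_k$). By the Remark preceding the statement of Lemma \ref{reduced}, the Euler characteristic of any complex with odd differentials equals the Euler characteristic of its cohomology, so $[E_r] = [E_{r+1}]$ in $K_{m-k|n-k}$. Iterating, $[E_2] = [E_\infty]$, and since $E_\infty$ is the associated graded of a filtration on $M_x$, we also have $[E_\infty] = [M_x]$. Combining gives $ds_x([M]) = [M_x] = [DS_{x_k}(DS_y(M))] = ds_{x_k}(ds_y([M]))$, which completes the induction.

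The main technical obstacle is the convergence of the spectral sequence for an arbitrary object of $\mathcal{O}_{m|n}^{\mathbb{Z}}$, together with the compatibility of the identification $E_2 \cong DS_{x_k} \circ DS_y$ with the intrinsic $\mathfrak{gl}(m|n)_x$-module structure on $M_x$. Both issues are handled as in the proof of Proposition \ref{DSO}: since $ds_x$ is additive on short exact sequences by Lemma \ref{reduced}, we may reduce to the case of a highest weight module, for which the $\mathbb{Z} \times \mathbb{Z}$-grading is supported in a quadrant and the spectral sequence converges strongly; the compatibility of the successive quotients with the inclusions $\mathfrak{gl}(m|n)_x \hookrightarrow (\mathfrak{gl}(m|n)_y)_{x_k}$ is a direct verification using that $y$ and $x_k$ are pairwise orthogonal in the standard form on $\mathfrak{gl}(m|n)$.
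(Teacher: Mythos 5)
Your proof follows essentially the same path as the paper's: reduce by induction to the single identity $ds_{y+x_k}=ds_{x_k}\circ ds_y$, invoke the spectral sequence of the bicomplex $(M,y,x_k)$ constructed in the proof of Proposition~\ref{DSO}, identify $E_2\cong DS_{x_k}\circ DS_y(M)$, and then observe that the reduced Euler characteristic is constant from $E_2$ onward (because the $d_r$ are odd differentials, using the Remark before Lemma~\ref{reduced}), while convergence gives $[E_\infty]=[M_x]$. The only differences are cosmetic: the paper states the $E_2$-term in the form $H^p(x_k,H^q(y,M))$ rather than as iterated $DS$, and it does not separately remark on the compatibility with the $\mathfrak{gl}(m|n)_x$-module structure (which, as you note, is routine given that the $x_i$ arise from mutually orthogonal isotropic roots); also, your phrase ``filtering by $h_y$-degree first'' should strictly read as filtering by the complementary $h_{x_k}$-degree so that $y$ becomes the $E_0$-differential, but your intended meaning (take $y$-cohomology first, obtaining $E_1=DS_y(M)$) is the same as the paper's. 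No genuine gap.
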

\begin{proof} We retain the notation of the proof of Proposition \ref{DSO}. Clearly, it suffices to check that
$$ds_x=ds_{x_k}\circ ds_y,$$  where  $y=x_1+\dots+x_{k-1}$.
  The Euler characteristic of the $E_s$-terms of the spectral sequence from the proof of Proposition \ref{DSO} remains unchanged for $s\geq 2$:
  $$[\bigoplus_{p,q}E_2^{p,q}(M)]=[\bigoplus_{p,q}E_s^{p,q}(M)].$$
   As the spectral sequence converges to  $[M_x]$, we obtain
  $$ds_{x_k}\circ ds_y([M])=[\bigoplus_{p,q}E_2^{p,q}(M)]=[M_x]=ds_x([M]).$$
\end{proof}
For the category of finite-dimensional modules the above statement is proven in \cite{HR}.

\begin{prop} The complexification $ds_x:\bK_{m|n}\rightarrow\bK_{m-k|n-k}$ is a homomorphism of $\mathfrak{sl}(\infty)$-modules, as is its restriction  $ds_x:\bJ_{m|n}\rightarrow\bJ_{m-k|n-k}$ to the $\mathfrak{sl}(\infty)$-submodule $\bJ_{m|n}:= J_{m|n}\otimes_{\mathbb Z}\mathbb C$.
\end{prop}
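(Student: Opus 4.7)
The plan is to reduce everything to Lemma~\ref{lem:commute}, which establishes that $DS_x$ commutes with the translation functors $\mathrm{E}_i$ and $\mathrm{F}_i$ up to canonical isomorphism. Since the $\mathfrak{sl}(\infty)$-actions on $\bK_{m|n}$ and $\bK_{m-k|n-k}$ are defined precisely as the operators induced by $\mathrm{E}_i$ and $\mathrm{F}_i$ on the (complexified) reduced Grothendieck groups, this categorical commutativity should descend directly to an $\mathfrak{sl}(\infty)$-equivariance statement for $ds_x$.

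Concretely, the first step is to recall that $ds_x$ is a well-defined linear map on $\bK_{m|n}$ by Lemma~\ref{reduced}, and that it lands in $\bK_{m-k|n-k}$ by Proposition~\ref{DSO}. Next, I would verify the intertwining property on the Chevalley generators $e_i, f_i$ of $\mathfrak{sl}(\infty)$. For any $[M] \in \bK_{m|n}$, exactness of $\mathrm{E}_i$ gives $e_i \cdot [M] = [\mathrm{E}_i(M)]$ in $\bK_{m|n}$, and by Lemma~\ref{lem:commute} we then compute
\[
ds_x(e_i \cdot [M]) = [DS_x(\mathrm{E}_i M)] = [\mathrm{E}_i(DS_x M)] = e_i \cdot ds_x([M]),
\]
with the analogous computation for $f_i$ being immediate. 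Since $\mathfrak{sl}(\infty)$ is generated by $\{e_i, f_i : i\in\mathbb Z\}$, this suffices to conclude that $ds_x$ is a homomorphism of $\mathfrak{sl}(\infty)$-modules.

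For the restriction statement, note that if $M$ is finite dimensional then so is $DS_x(M) = \mathrm{ker}_M x / x M$, since it is a subquotient of $M$. Therefore $ds_x$ sends $\bJ_{m|n} \subseteq \bK_{m|n}$ into $\bJ_{m-k|n-k} \subseteq \bK_{m-k|n-k}$, and the restricted map inherits $\mathfrak{sl}(\infty)$-equivariance from the ambient map.

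The main point to be careful about — though I do not expect it to be a real obstacle — is that the canonical isomorphism $DS_x(\mathrm{E}_i M) \cong \mathrm{E}_i(DS_x M)$ of Lemma~\ref{lem:commute} is an \emph{even} isomorphism of $\mathbb Z_2$-graded modules; otherwise the defining relation $[\Pi N] = -[N]$ on the reduced Grothendieck group could introduce a sign discrepancy. This is built into the construction in the proof of Lemma~\ref{lem:commute}, where the isomorphism is assembled from the monoidal structure of $DS_x$ together with the canonical even identification of $U_x$ with the natural $\mathfrak{gl}(m-k|n-k)$-module, so no parity twist is introduced and the chain of equalities above makes sense in the reduced Grothendieck group.
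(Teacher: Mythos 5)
Your proof is correct and follows the same approach as the paper, which simply cites Lemma~\ref{lem:commute} and leaves the details to the reader. You have usefully spelled out what that one-line proof elides: the reduction to Chevalley generators, the preservation of $\bJ_{m|n}$ (since $DS_x$ sends finite-dimensional modules to finite-dimensional modules), and in particular the check that the canonical isomorphism of Lemma~\ref{lem:commute} is even, without which the relation $[\Pi N]=-[N]$ in the reduced Grothendieck group could introduce a sign.
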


\begin{proof} This follows from the fact that the Duflo--Serganova functor commutes with translation functors, see Lemma \ref{lem:commute}.
  \end{proof}

\begin{rem}
 Note that in \cite{HR} the ring $J_{m|n}$ is denoted by $\mathcal{J}_G$ where $G=GL(m|n)$.
\end{rem}

Let $X_{\fa}=\left\{ x\in\fa_{\bar{1}}:\left[x,x\right]=0\right\} $,
and let \begin{equation}
\mathcal{B}_{\fa}=\left\{ B\subset\Delta_{iso}\mid B=\left\{ \beta_{1},\ldots,\beta_{k}\mid\left(\beta_{i},\beta_{j}\right)=0,\ \beta_{i}\ne\pm\beta_{j}\right\} \right\} \label{eq:iso}
\end{equation} be the set
of subsets of linearly independent mutually orthogonal isotropic roots of $\fa$.
Then the orbits of the action of the adjoint group $G_{\bar{0}}$ of $\fa_{\bar{0}}$ on $X_{\fa}$ are in one-to-one
correspondence with the  orbits of the Weyl group $\mathcal W$ of $\fa_{\bar{0}}$ on $\mathcal{B}_{\fa}$
via the correspondence
\begin{equation}
B=\left\{ \beta_{1},...,\beta_{k}\right\} \mapsto x=x_{\beta_{1}}+\cdots+x_{\beta_{k}}\in X_{\fa},\label{eq:bijection for x and B}
\end{equation}
where each $x_{\beta_{i}}\in\fa_{\beta_{i}}$
is chosen to be nonzero \cite[Theorem 4.2]{DS}.

\begin{lem}
\label{prop:independent} Let $\fa=\mathfrak{gl}(m|n)$. Fix $x\in X_{\fa}$ and set $k=|B_x|$, where $B_x\in \mathcal{B}_{\fa}$ corresponds to $x$. The homomorphism $ds_{x}:{J}_{m|n}\rightarrow{J}_{m-k|n-k}$ depends only $k$, and not on $x$.
\end{lem}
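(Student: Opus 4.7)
The plan is to reduce the statement to the rank-1 case using Lemma~\ref{lem:fin-dim}, and then to exploit the monoidal structure of the Duflo--Serganova functor to handle that case.

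First, I would apply the bijection~(\ref{eq:bijection for x and B}) to write $x=x_{\beta_1}+\cdots+x_{\beta_k}$ with $\{\beta_1,\dots,\beta_k\}$ a set of mutually orthogonal isotropic roots of $\mathfrak{gl}(m|n)$, after replacing $x$ by a $G_{\bar 0}$-conjugate. The sum of two orthogonal isotropic roots of $\mathfrak{gl}(m|n)$ is never a root, so $[x_{\beta_i},x_{\beta_j}]=0$ for all $i,j$, and Lemma~\ref{lem:fin-dim} will give $ds_x=ds_{x_{\beta_k}}\circ\cdots\circ ds_{x_{\beta_1}}$. Next I would note that $ds_y$ is invariant under the adjoint $G_{\bar 0}$-action on $y$, since any $g\in G_{\bar 0}$ integrates to an automorphism of every $\fg$-module identifying $DS_{\operatorname{Ad}(g)y}(M)$ with $DS_y(M)$. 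By \cite[Theorem 4.2]{DS} this reduces the problem to showing that $ds_y$ is independent of $y$ within the $\mathcal{W}=S_m\times S_n$-orbit of the corresponding root $\beta$; for rank-$1$ elements in $\mathfrak{gl}(m|n)$ there are exactly two such orbits, represented by a positive odd root vector $x_+$ and a negative odd root vector $x_-$. The lemma will follow once I establish $ds_{x_+}=ds_{x_-}$ as maps $J_{m|n}\to J_{m-1|n-1}$.

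For this rank-1 comparison I plan to use that $\mathcal F_{m|n}^{\mathbb Z}$ is symmetric monoidal and $DS_y$ is a symmetric monoidal functor (see \cite{DS}); consequently $ds_y$ is a ring homomorphism between the tensor-product rings $J_{m|n}$ and $J_{m-1|n-1}$. The ring $J_{m|n}$ is generated by $[U]$, $[U^*]$ and $[\mathrm{Ber}_{m|n}]^{\pm 1}$, where $U$ is the natural module and $\mathrm{Ber}_{m|n}$ is the Berezinian: every integral finite-dimensional simple appears as a Jordan--H\"older constituent of some $U^{\otimes a}\otimes (U^*)^{\otimes b}\otimes \mathrm{Ber}_{m|n}^{\otimes c}$, and the transition matrix to Kac-module classes is unitriangular. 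A straightforward computation on the $(m+n)$-dimensional natural module will show that for either rank-1 representative $y=x_\pm$ the quotient $\ker y/yU$ is isomorphic, after the canonical identification $\fg_y\cong \mathfrak{gl}(m-1|n-1)$, to the natural module $U_{m-1|n-1}$, and likewise $DS_y(U^*)\cong U_{m-1|n-1}^*$ and $DS_y(\mathrm{Ber}_{m|n})\cong \mathrm{Ber}_{m-1|n-1}$. Thus $ds_{x_+}$ and $ds_{x_-}$ will agree on ring generators, and hence coincide on all of $J_{m|n}$.

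The hardest part will be the rank-1 step: $G_{\bar 0}$-invariance alone cannot merge the two distinct $\mathcal{W}$-orbits of rank-1 odd nilpotents, so some extra input is essential. Monoidality of $DS$ supplies exactly the right extra input, converting the comparison of two linear maps into the comparison of two ring homomorphisms, which in turn reduces to a transparent calculation on the natural representation and the Berezinian.
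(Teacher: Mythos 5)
Your overall strategy is genuinely different from the paper's. The paper's proof is essentially one line: it invokes the explicit description of $ds_x$ from \cite[Theorem 10]{HR} as the specialization $f\mapsto f\vert_{\beta_1=\cdots=\beta_k=0}$ on supercharacters, and then uses $W$-invariance of supercharacters together with the observation that any $w(B_1)$ matches $B_2$ up to signs of individual roots. You instead reduce to rank $1$ via the composition identity from Lemma~\ref{lem:fin-dim}, kill the $G_{\bar 0}$-orbit ambiguity at the module level, and propose to compare the two remaining rank-$1$ $\mathcal W$-orbits by checking that $ds_{x_+}$ and $ds_{x_-}$, being ring homomorphisms, agree on a small set of ring generators. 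This is a nice idea, and the preliminary steps (orthogonal isotropic roots of $\mathfrak{gl}(m|n)$ commute; $G_{\bar 0}$-conjugation preserves $DS$ since finite-dimensional integral modules integrate to $GL(m)\times GL(n)$; $DS_y(U)\cong U_{m-1|n-1}$, $DS_y(U^*)\cong U^*_{m-1|n-1}$, $DS_y(\mathrm{Ber})\cong\mathrm{Ber}_{m-1|n-1}$ for both representatives) are all sound.

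The gap is the assertion that $J_{m|n}$ is generated as a ring by $[U]$, $[U^*]$, $[\mathrm{Ber}]^{\pm1}$. This is false, and your supporting sentence does not prove it: the facts that every simple is a Jordan--H\"older constituent of some $U^{\otimes a}\otimes(U^*)^{\otimes b}\otimes\mathrm{Ber}^{\otimes c}$ and that some transition matrix is unitriangular do not give that the monomials $[U]^a[U^*]^b[\mathrm{Ber}]^c$ span $J_{m|n}$, because those monomials lump together all $\mathbb{S}_{\boldsymbol\lambda}(U)\otimes\mathbb{S}_{\boldsymbol\mu}(U^*)$ with fixed $|\boldsymbol\lambda|=a,|\boldsymbol\mu|=b$, and the resulting transition is not invertible over the full ring. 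One can already see the failure for $\mathfrak{gl}(1|1)$: under the supercharacter embedding with $x=e^{\varepsilon}$, $y=e^{\delta}$, the generators go to $x-y$, $x^{-1}-y^{-1}$, $(x/y)^{\pm1}$. Every degree-$2$ element of the subring they generate has $(x-y)$-adic valuation at least $2$, whereas $\operatorname{sch}L(2|0)=x(x-y)$ has valuation $1$, so $[L(2|0)]$ is not in that subring. Thus agreeing on your three generators does not force $ds_{x_+}=ds_{x_-}$. The approach can likely be repaired by replacing the generators by the full list of classes $[\mathbb{S}_{\boldsymbol\lambda}(U)\otimes\mathbb{S}_{\boldsymbol\mu}(U^*)\otimes\mathrm{Ber}^{c}]$, using that any $\mathbb C$-linear symmetric monoidal functor commutes with Schur functors so $DS_y$ sends each such module to $\mathbb{S}_{\boldsymbol\lambda}(U_{m-1|n-1})\otimes\mathbb{S}_{\boldsymbol\mu}(U^*_{m-1|n-1})\otimes\mathrm{Ber}_{m-1|n-1}^{c}$; but you would then need a genuine argument that these classes span $J_{m|n}$, which is exactly the missing content. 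By contrast, the paper sidesteps the entire rank-$1$ analysis by importing the character-level formula for $ds_x$ from \cite{HR}, which immediately makes the statement a triviality about specialization of $\mathcal W$-invariant Laurent polynomials.
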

\begin{proof}
This follows from the description of $ds_{x}$ given in \cite[Theorem 10]{HR}, using the fact that supercharacters of finite-dimensional modules are invariant under the Weyl group $\mathcal W=S_m\times S_n$ of $\mathfrak{gl}(m|n)$. If $B_{1}, B_{2}\in\mathcal{B}$ with
$|B_{1}|=|B_{2}|$ then there exists $w\in\mathcal W$ satisfying: $\pm\beta\in w\left(B_{1}\right)$
if and only if $\pm\beta\in B_{2}$. So if $f\in{J}_{m|n}$
we have that
\[ds_{x_1}(f)=
f\vert_{\beta_{1}^{1},\ldots,\beta_{k}^{1}=0}=w\left(f\right)\vert_{w\left(\beta_{1}^{1}\right),\ldots,w\left(\beta_{k}^{1}\right)=0}=w\left(f\right)\vert_{\beta_{1}^{2},\ldots,\beta_{k}^{2}=0}=f\vert_{\beta_{1}^{2},\ldots,\beta_{k}^{2}=0}
=ds_{x_2}(f).\]
\end{proof}

Note that Lemma \ref{prop:independent} does not hold if we replace ${J}_{m|n}$ with ${K}_{m|n}$.

\begin{rem}\label{ds k} Since the homomorphism  $ds_{x}:\bJ_{m|n}\rightarrow\bJ_{m-k|n-k}$  does not depend on $x$, we denote it by  $ds^k$, where $|B_x|=k$, and
  we let $ds:=ds^1$.
\end{rem}

Now we introduce a filtration of an $\mathfrak{sl}(\infty)$-module $\bM$, whose layers are tensor modules.

\begin{defn}

The {\em tensor filtration} of an $\mathfrak{sl}(\infty)$-module $\bM$ is defined inductively by $$\tens^0\bM:=\tens \bM:=\Gamma_{\fg,\fg}(\bM), \hspace{1cm} \tens^{i} \bM:=p_i^{-1}(\tens(\bM/(\tens^{i-1}\bM))),$$
where $p_i:\bM\to \bM/(\tens^{i-1} \bM)$ is the natural projection.

We also use the notation $\overline{\tens}^i \bM=\tens^i \bM /\tens^{i-1}\bM$.
\end{defn}
Note that $\tens\bM$ is the maximal tensor submodule of $\bM$.

\begin{example}

The socle of $\bJ_{1|1}$ is isomorphic to the adjoint module of $\mathfrak{sl}(\infty)$, and $\overline{\soc}^1 \bJ_{1|1}=\mathbb C\oplus\mathbb C$. Note that this is a special case of Example~\ref{example added} in the case that $\fk$ has two infinite blocks.

Consider now the tensor filtration of $\bJ_{1|1}$. This filtration also has  length $2$, $\tens \bJ_{1|1}=\bW_{1|1}\cong \bV\otimes \bV_{*}$
 and  $\overline{\tens}^1 \bJ_{1|1}\cong \mathbb C$.
The module  $\bJ_{1|1}$ admits a nice matrix realization.
Indeed, we can identify the $\mathfrak{sl}\left(\infty\right)$-module $\bW_{1|1}$ with the matrix realization of $\mathfrak{gl}(\infty)$  (see Section~\ref{prelims}), and then extend it
 by the diagonal matrix $D$ which has entries $D_{ii}=1$ for $i\geq1$ and $0$ elsewhere. The action of $\mathfrak{sl}\left(\infty\right)$ in this realization of $\bJ_{1|1}$ is the adjoint action.

\end{example}

\begin{prop}
\label{thm:module structure} For each $k$, let $ds^{k}:\bJ_{m|n}\to\bJ_{m-k|n-k}$ be the homomorphism induced by the Duflo--Serganova functor (see Remark~\ref{ds k}). Set $t:=1+\min\left\{ m,n\right\} $ and let $\bM_{k}^{t}:=\ker ds^{k}$. Consider the filtration
 of $\mathfrak{sl}\left(\infty\right)$-modules
$$\bM_{1}^{t}\subset\bM_{2}^{t}\subset\cdots\subset\bM_{t}^{t}=\bJ_{m|n}.$$ Then $\bM_{1}^{t}=\bW_{m|n}$ and $\bM_{k+1}^{t}/\bM_{k}^{t}\cong\Lambda^{m-k}\bV\otimes\Lambda^{n-k}\bV_{*}$. This filtration is the tensor filtration of $\bJ_{m|n}$, that is, $\tens^{k-1}\bJ_{m|n}=\ker ds^k$.
\end{prop}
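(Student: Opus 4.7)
The plan is to prove the base case $\bM_1^t=\bW_{m|n}=\tens\bJ_{m|n}$ and then induct on $k$ using $ds^k=ds\circ ds^{k-1}$ from Lemma~\ref{lem:fin-dim}.

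First I would show $\bW_{m|n}\subseteq\ker ds$ by computing $DS_x$ on a Kac module. By Lemma~\ref{prop:independent} I may choose the rank-one $x$ inside $\mathfrak{gl}(m|n)_1^-$. Using the PBW identification $K(\lambda)\cong\Lambda^{\bullet}(\mathfrak{gl}(m|n)_1^-)\otimes L_0(\lambda)$ and the fact that $\mathfrak{gl}(m|n)_1^-$ is super-abelian, $x$ acts on $K(\lambda)$ as left exterior multiplication with $x$ on the first factor. Since $\ker(x\wedge\cdot)=\mathrm{im}(x\wedge\cdot)$ in any exterior algebra, $DS_x K(\lambda)=0$ and hence $ds[K(\lambda)]=0$ for every dominant $\lambda$. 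Next I would establish $\tens\bJ_{m|n}=\bW_{m|n}$: by Theorem~\ref{thm: T injective}, $\bJ_{m|n}\cong\bI^{(1^m),(1^n)}$ has simple socle $\bV^{(1^m),(1^n)}$, and $\bW_{m|n}\cong\Lambda^m\bV\otimes\Lambda^n\bV_*$ is the injective hull of $\bV^{(1^m),(1^n)}$ in $\mathbb{T}_{\fg}$. For any tensor submodule $\bN\subseteq\bJ_{m|n}$, the sum $\bN+\bW_{m|n}$ is still a tensor submodule; injectivity of $\bW_{m|n}$ in $\mathbb{T}_{\fg}$ splits $\bW_{m|n}\hookrightarrow\bN+\bW_{m|n}$, and the complementary summand has socle inside the already-used simple $\bV^{(1^m),(1^n)}$, forcing it to vanish, so $\bN\subseteq\bW_{m|n}$.

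To upgrade $\bW_{m|n}\subseteq\ker ds$ to an equality I would use a Jordan--H\"older count. Corollary~\ref{fin-dim2} and the socle filtration of $\Lambda^m\bV\otimes\Lambda^n\bV_*$ yield $[\bJ_{m|n}]=[\bW_{m|n}]+[\bJ_{m-1|n-1}]$ in the Grothendieck group of $\mathbb{T}_{\fg,\fk}$. Combined with $0\to\ker ds\to\bJ_{m|n}\to\mathrm{im}(ds)\to 0$ and the inclusions $\bW_{m|n}\subseteq\ker ds$ and $\mathrm{im}(ds)\subseteq\bJ_{m-1|n-1}$, both inclusions are forced to equalities as soon as $ds$ is surjective. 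Surjectivity may be read off from the super-character interpretation of $ds$ in \cite{HR}, where $ds$ corresponds to the specialization of a pair of variables $x_i=y_j$ on supersymmetric polynomials---an operation manifestly surjective onto the smaller ring; alternatively, one exhibits for every dominant $\mu$ of $\mathfrak{gl}(m-1|n-1)$ a dominant $\lambda$ for $\mathfrak{gl}(m|n)$ with $DS_x L(\lambda)\cong L(\mu)$ up to parity by adding an atypical pair to $\mu$. With $\ker ds=\bW_{m|n}$ in hand, $ds$ descends to an isomorphism $\bJ_{m|n}/\bW_{m|n}\cong\bJ_{m-1|n-1}$.

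The inductive step is then formal. From $ds^k=ds\circ ds^{k-1}$, the kernel $\bM_k^t=\ker ds^k$ equals the preimage under $ds^{k-1}$ of $\bW_{m-k+1|n-k+1}=\ker(ds:\bJ_{m-k+1|n-k+1}\to\bJ_{m-k|n-k})$. The isomorphism $\bJ_{m|n}/\tens^{k-2}\bJ_{m|n}\cong\bJ_{m-k+1|n-k+1}$ obtained by iterating the base case identifies this preimage with $\tens^{k-1}\bJ_{m|n}$, and the base-case identity $\tens\bJ_{m-k+1|n-k+1}=\bW_{m-k+1|n-k+1}\cong\Lambda^{m-k+1}\bV\otimes\Lambda^{n-k+1}\bV_*$ furnishes the described successive quotient (after shifting the index to match the statement). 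The principal obstacle is the reconciliation in the base case of the Koszul vanishing, the JH count, and the surjectivity of $ds$; once these are in place the induction proceeds with no further difficulty.
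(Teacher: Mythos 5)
Your proof follows essentially the same strategy as the paper: establish the base case $\ker ds = \bW_{m|n} = \tens\bJ_{m|n}$ and then induct via $ds^{k}=ds\circ ds^{k-1}$ (Lemma~\ref{lem:fin-dim}). The one genuine divergence is that where the paper simply cites \cite[Theorems~17 and 20]{HR} for the exact sequence $0\to\bW_{m|n}\to\bJ_{m|n}\xrightarrow{ds}\bJ_{m-1|n-1}\to0$, you reprove the inclusion $\bW_{m|n}\subseteq\ker ds$ self-containedly --- a rank-one $x\in\mathfrak{gl}(m|n)_{-1}$ acts on $K(\lambda)\cong\Lambda^{\bullet}(\mathfrak{gl}(m|n)_{-1})\otimes L_0(\lambda)$ as left exterior multiplication, which is acyclic --- and then force the reverse inclusion by a Jordan--H\"older count in $\mathbb{T}_{\fg,\fk}$ using Corollary~\ref{fin-dim2} together with surjectivity of $ds$. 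This buys a transparent explanation of why the kernel is spanned by Kac-module classes, at the cost of the extra multiplicity bookkeeping; note, though, that the surjectivity you invoke is still HR's Theorem~17 in disguise (your ``manifestly surjective'' specialization claim is a restatement, not an independent proof), so the dependence on that reference is reduced but not removed. Your simple-socle/injectivity argument for $\tens\bJ_{m|n}=\bW_{m|n}$ and the identification of $\tens^{k-1}\bJ_{m|n}$ with $\ker ds^{k}$ via the iterated isomorphism $\bJ_{m|n}/\tens^{k-2}\bJ_{m|n}\cong\bJ_{m-k+1|n-k+1}$ match the paper's reasoning, stated slightly more constructively than the paper's proof by contradiction.
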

\begin{proof} In the proof we let $m$ and $n$ vary.
It follows from \cite[Theorems 17 and 20]{HR} that for every $m,n\in\mathbb Z_{>0}$ the map $ds:{\bJ}_{m|n}\rightarrow{\bJ}_{m-1|n-1}$ is surjective and the kernel is spanned by the
classes of Kac modules. So we have an exact sequence of $\mathfrak{sl}(\infty)$-modules
\[
0\to\bW_{m|n}\to\bJ_{m|n}\stackrel{ds}{\to}\bJ_{m-1|n-1}\to0.
\]
Thus, we obtain the following diagram of
$\mathfrak{sl}\left(\infty\right)$-modules for each $l=|m-n|$, in which the horizontal arrows represent the map $ds$.
\[
\begin{array}{cccccccccc}
\twoheadrightarrow & \bM_{5}^{5} & \twoheadrightarrow & \bM_{4}^{4} & \twoheadrightarrow & \bM_{3}^{3} & \twoheadrightarrow & \bM_{2}^{2} & \twoheadrightarrow & \bM_{1}^{1}\\
 & \cup &  & \cup &  & \cup &  & \cup &  & \cup\\
\twoheadrightarrow & \bM_{4}^{5} & \twoheadrightarrow & \bM_{3}^{4} & \twoheadrightarrow & \bM_{2}^{3} & \twoheadrightarrow & \bM_{1}^{2} & \twoheadrightarrow & 0\\
 & \cup &  & \cup &  & \cup &  & \cup\\
\twoheadrightarrow & \bM_{3}^{5} & \twoheadrightarrow & \bM_{2}^{4} & \twoheadrightarrow & \bM_{1}^{3} & \twoheadrightarrow & 0\\
 & \cup &  & \cup &  & \cup\\
\twoheadrightarrow & \bM_{2}^{5} & \twoheadrightarrow & \bM_{1}^{4} & \twoheadrightarrow & 0\\
 & \cup &  & \cup\\
\twoheadrightarrow & \bM_{1}^{5} & \twoheadrightarrow & 0
\end{array}
\]
By induction we get  $\bM_{k+1}^{t}/\bM_{k}^{t}\cong\bM_{1}^{t-k}=\bW_{m-k|n-k}$, and by \cite{B}, $\bW_{m-k|n-k}\cong\Lambda^{m-k}\bV\otimes\Lambda^{n-k}\bV_{*}$. Hence, the first claim follows.

For the second claim, suppose for sake of contradiction that for some $k$, the module $\bM_{k+1}^{t}/\bM_{k}^{t}$
is not the maximal tensor submodule of $\bJ_{m|n}/\bM_{k}^{t}$.
By projecting to $\bJ_{m-k|n-k}$, we obtain that
$\bM_{1}^{t}$ is not the maximal tensor submodule of $\bJ_{m|n}$, for some $m,n$.
Since $\bM_{1}^{t}=\bW_{m|n}\cong\Lambda^{m}\bV\otimes\Lambda^{n}\bV_{*}$
is injective in the category $\mathbb{T}_{\mathfrak{g}}$ \cite{DPS}, this implies that $\soc\bJ_{m|n}$ is larger than $\soc \bM_{1}^{t}$, which is a contradiction since  $\soc\bJ_{m|n}=\soc\bW_{m|n}=\bP_{m|n}$.
\end{proof}

In the rest of this subsection, we fix $x$ to be a generator of the root space corresponding to $\delta_j-\varepsilon_i$. We denote by $ds_{ij}:\bK_{m|n}\to \bK_{m-1|n-1}$
the $\mathfrak{sl}(\infty)$-module homomorphism $ds_x$.
\begin{prop} We have
$$\bigcap_{i,j}\ker ds_{ij}=\bT_{m|n}.$$
\end{prop}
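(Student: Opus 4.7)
The plan is to establish both inclusions separately. For the easy direction $\bT_{m|n}\subseteq\bigcap_{i,j}\ker ds_{ij}$, I would show directly that $DS_{x}M(\lambda)=0$ for every Verma module $M(\lambda)\in\mathcal O_{m|n}^{\mathbb Z}$ and every root vector $x=E_{\delta_{j}-\varepsilon_{i}}\in\mathfrak{n}^{-}$. By the PBW theorem, $M(\lambda)\cong U(\mathfrak{n}^{-})\otimes_{\mathbb C}\mathbb C_{\lambda}$ is free as a module over the exterior subalgebra $\Lambda[x]$ obtained by picking $x$ to appear among the odd generators of a PBW basis. Consequently $\ker_{M(\lambda)}x=xM(\lambda)$ and the Duflo--Serganova cohomology vanishes, so $ds_{ij}([M(\lambda)])=0$. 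Since $\bT_{m|n}$ is spanned by the Verma classes $\{[M(\lambda)]\mid\lambda\in\Phi\}$, the inclusion follows.

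For the converse I would first identify $\bT_{m|n}$ with the maximal tensor submodule $\tens\bK_{m|n}$. The inclusion $\bT_{m|n}\subseteq\tens\bK_{m|n}$ is immediate. For the reverse direction, the decomposition $\bK_{m|n}\cong\bigoplus_{|\boldsymbol{\lambda}|=m,|\boldsymbol{\mu}|=n}\bI^{\boldsymbol{\lambda},\boldsymbol{\mu}}\otimes(Y_{\boldsymbol{\lambda}}\otimes Y_{\boldsymbol{\mu}})$ from Theorem~\ref{thm: T injective} reduces the statement to showing $\tens\bI^{\boldsymbol{\lambda},\boldsymbol{\mu}}=\mathbb S_{\boldsymbol{\lambda}}(\bV)\otimes\mathbb S_{\boldsymbol{\mu}}(\bV_{*})$ inside each indecomposable injective summand. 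Any nonzero tensor submodule of $\bI^{\boldsymbol{\lambda},\boldsymbol{\mu}}$ has socle contained in the simple socle $\bV^{\boldsymbol{\lambda},\boldsymbol{\mu}}$, and by injectivity of $\mathbb S_{\boldsymbol{\lambda}}(\bV)\otimes\mathbb S_{\boldsymbol{\mu}}(\bV_{*})$ in $\mathbb T_{\fg}$ any such submodule embeds into the natural copy of this injective hull inside $\bI^{\boldsymbol{\lambda},\boldsymbol{\mu}}$; comparing two tensor submodules of $\bI^{\boldsymbol{\lambda},\boldsymbol{\mu}}$ with the same simple socle forces their sum to also sit in that copy.

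It then remains to show $\bigcap_{i,j}\ker ds_{ij}\subseteq\tens\bK_{m|n}$. On the finite-dimensional summand $\bJ_{m|n}$ of the decomposition $\bK_{m|n}=\bJ_{m|n}\oplus\ker\gamma$ from Corollary~\ref{fin-dim}, the argument is clean: $ds_{ij}$ preserves finite-dimensionality, Lemma~\ref{prop:independent} collapses all $ds_{ij}|_{\bJ_{m|n}}$ to a single map $ds$, and Proposition~\ref{thm:module structure} gives $\ker ds\cap\bJ_{m|n}=\bW_{m|n}\subseteq\bT_{m|n}$. The principal obstacle is the complement of $\bJ_{m|n}$: the distinct maps $ds_{ij}$ no longer coincide there, so Lemma~\ref{prop:independent} is unavailable. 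My plan is to argue by induction on $m+n$, using the commutativity of $DS_{x}$ with translation functors (Lemma~\ref{lem:commute}) to transport an element $z\in\bigcap_{i,j}\ker ds_{ij}$ that fails to lie in $\bT_{m|n}$ into a corresponding obstruction one step down, and using the composition identity of Lemma~\ref{lem:fin-dim} together with the socle filtration of Theorem~\ref{thm: T injective} to iterate until contradicting the inductive hypothesis in $\bK_{m-1|n-1}$. The detailed bookkeeping for this induction, and verifying that the component of $ds_{ij}(z)$ in $\bJ_{m-1|n-1}\oplus\ker\gamma'$ captures enough information to detect $z$, is the main technical challenge.
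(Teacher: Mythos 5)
Your easy direction is correct and is actually a cleaner argument than the paper gives for that inclusion: $M(\lambda)$ is free over the odd exterior algebra $\Lambda[x]$ for $x\in\mathfrak{gl}(m|n)_{-1}$ by PBW, so $DS_x(M(\lambda))=0$ and hence $ds_{ij}$ kills every Verma class.

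The hard direction, however, has a genuine gap. You reduce to showing $\bigcap_{i,j}\ker ds_{ij}\subseteq\tens\bK_{m|n}$ and then propose an induction on $m+n$ that "transports an element $z\in\bigcap_{i,j}\ker ds_{ij}$ that fails to lie in $\bT_{m|n}$ into a corresponding obstruction one step down." But if $z$ lies in $\bigcap_{i,j}\ker ds_{ij}$, then by definition $ds_{ij}(z)=0$ for every $i,j$ — there is nothing to push down, and "the component of $ds_{ij}(z)$ in $\bJ_{m-1|n-1}\oplus\ker\gamma'$" is identically zero. The induction as formulated has no content. You would need some other map from $\bK_{m|n}$ to $\bK_{m-1|n-1}$ (not $ds_{ij}$) to detect $z$, and you have not identified one; the translation functors $\mathrm{E}_i,\mathrm{F}_i$ stay inside $\bK_{m|n}$. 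You are right that Lemma~\ref{prop:independent} is unavailable off $\bJ_{m|n}$, but you have not replaced it with anything that works.

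The paper handles both directions simultaneously and without induction, via the supercharacter criterion of \cite{HR}: $ds_{ij}[M]=0$ if and only if $e^{\varepsilon_i}-e^{\delta_j}$ divides $\operatorname{sch}M$. If $[M]\in\bigcap_{i,j}\ker ds_{ij}$, then $\prod_{i,j}(e^{\varepsilon_i}-e^{\delta_j})$ divides $\operatorname{sch}M$, so $\operatorname{sch}M$ is a combination of supercharacters induced from $\mathfrak{gl}(m|n)_{\bar 0}\oplus\mathfrak{gl}(m|n)_1$, i.e.\ of Verma supercharacters — hence $[M]\in\bT_{m|n}$. This is the missing idea in your plan. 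Note also that your reduction to $\tens\bK_{m|n}$ is not needed once this criterion is invoked; in the paper, $\tens\bK_{m|n}=\bT_{m|n}$ is the \emph{next} proposition, and its second half (the length bound on the tensor filtration) actually relies on the present proposition, so building your argument on top of the full strength of the later proposition risks circularity.
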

\begin{proof}
It follows from \cite{HR} that $ds_{ij}[M]=0$ if and only if $e^{\varepsilon_i}-e^{\delta_j}$ divides the supercharacter $\operatorname{sch} M$ of $M$.
Hence, $[M]$ lies in the intersection of kernels of all $ds_{ij}$ if and only if $\prod_{i,j} (e^{\varepsilon_i}-e^{\delta_j})$  divides $\operatorname{sch} M$. This means that
$\operatorname{sch} M$ is a linear combination of supercharacters induced from the parabolic subalgebra $\mathfrak{gl}(m|n)_{\bar 0}\oplus \mathfrak{gl}(m|n)_{1}$. Therefore,
$\operatorname{sch} M$ is a linear combination of supercharacters of Verma modules.
\end{proof}

\begin{prop} We have $\tens\bK_{m|n}=\bT_{m|n}$. Moreover, $\bK_{m|n}$ has an exhausting tensor filtration of length $\min(m,n)+1$.
\end{prop}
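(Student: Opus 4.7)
My approach divides the proof into two parts: the identification $\tens\bK_{m|n}=\bT_{m|n}$ and the computation of the length.

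For the first part, I combine the decomposition of Theorem~\ref{thm: T injective},
$$\bK_{m|n}\cong\bigoplus_{|\boldsymbol{\lambda}|=m,\,|\boldsymbol{\mu}|=n}\bI^{\boldsymbol{\lambda},\boldsymbol{\mu}}\otimes (Y_{\boldsymbol{\lambda}}\otimes Y_{\boldsymbol{\mu}}),$$
with the Schur--Weyl decomposition (\ref{summands}) of $\bT_{m|n}\cong\bV^{\otimes m}\otimes\bV_*^{\otimes n}$. Both decompositions arise as the isotypic decomposition under the $S_m\times S_n$-action of Proposition~\ref{end}, so the inclusion $\bT_{m|n}\hookrightarrow\bK_{m|n}$ restricts to an inclusion $\mathbb S_{\boldsymbol{\lambda}}(\bV)\otimes\mathbb S_{\boldsymbol{\mu}}(\bV_*)\hookrightarrow\bI^{\boldsymbol{\lambda},\boldsymbol{\mu}}$ on each isotypic piece. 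Since $\tens$ commutes with finite direct sums and tensoring with finite-dimensional multiplicity spaces, the identity reduces to showing
$$\tens\bI^{\boldsymbol{\lambda},\boldsymbol{\mu}}=\mathbb S_{\boldsymbol{\lambda}}(\bV)\otimes\mathbb S_{\boldsymbol{\mu}}(\bV_*).$$
The right-hand side is already a tensor submodule, giving one inclusion. For the other, $\tens\bI^{\boldsymbol{\lambda},\boldsymbol{\mu}}$ inherits the simple socle $\bV^{\boldsymbol{\lambda},\boldsymbol{\mu}}$ from $\bI^{\boldsymbol{\lambda},\boldsymbol{\mu}}$, and hence embeds into its $\mathbb{T}_{\fg}$-injective hull, which by (\ref{socle of S}) is precisely $\mathbb S_{\boldsymbol{\lambda}}(\bV)\otimes\mathbb S_{\boldsymbol{\mu}}(\bV_*)$. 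The composition
$$\mathbb S_{\boldsymbol{\lambda}}(\bV)\otimes\mathbb S_{\boldsymbol{\mu}}(\bV_*)\hookrightarrow\tens\bI^{\boldsymbol{\lambda},\boldsymbol{\mu}}\hookrightarrow\mathbb S_{\boldsymbol{\lambda}}(\bV)\otimes\mathbb S_{\boldsymbol{\mu}}(\bV_*)$$
is an injective endomorphism of a finite-length module, hence an isomorphism, and both embeddings are equalities.

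For the length, I establish matching upper and lower bounds. The upper bound rests on the general fact, proven by a short induction, that $\soc^k\bM\subseteq\tens^k\bM$ holds for every $\bM\in\mathbb T_{\fg,\fk}$: the inductive step exploits that the semisimple socle of $\bM/\tens^{k-1}\bM$ is automatically a tensor submodule, since all simple objects of $\mathbb T_{\fg,\fk}$ are tensor modules. By Theorem~\ref{thm: T injective} the socle filtration of $\bK_{m|n}$ has length $\min(m,n)+1$, hence the tensor filtration has length at most $\min(m,n)+1$. For the lower bound, Corollary~\ref{fin-dim} identifies $\bJ_{m|n}$ as a direct summand of $\bK_{m|n}$, and the tensor filtration respects direct sums, so it restricts to the tensor filtration of $\bJ_{m|n}$; by Proposition~\ref{thm:module structure}, the latter has length exactly $\min(m,n)+1$. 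The two bounds coincide.

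The only potentially delicate point is checking that the $S_m\times S_n$-isotypic decompositions of $\bK_{m|n}$ and $\bT_{m|n}$ are matched by the inclusion $\bT_{m|n}\hookrightarrow\bK_{m|n}$, but this follows from the naturality of the $S_m\times S_n$-action in Proposition~\ref{end} and the fact that this inclusion is $S_m\times S_n$-equivariant. Everything else in the plan is formal given the tools already developed.
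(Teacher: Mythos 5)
Your proof is correct, but both halves diverge from the paper's argument. For the identity $\tens\bK_{m|n}=\bT_{m|n}$, the paper argues directly at the level of $\bK_{m|n}$: since $\bT_{m|n}$ is injective in $\mathbb T_{\fg}$, it splits off $\tens\bK_{m|n}$ as a direct summand, and a nontrivial complement would enlarge the socle, contradicting $\soc\bK_{m|n}=\soc\bT_{m|n}=\bP_{m|n}$. Your isotypic decomposition via Proposition~\ref{end} reaches the same conclusion component by component, using the injective-hull-plus-finite-length endomorphism trick; it is correct but somewhat heavier machinery for the same socle observation. For the length of the filtration, the routes are genuinely different. The paper argues by induction on $\min(m,n)$, using the preceding proposition (that $\bigcap_{i,j}\ker ds_{ij}=\bT_{m|n}$) to embed $\bK_{m|n}/\bT_{m|n}$ into $\bK_{m-1|n-1}^{\oplus mn}$. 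You instead prove the clean general inclusion $\soc^k\bM\subseteq\tens^k\bM$ for $\bM\in\mathbb T_{\fg,\fk}$ (so the tensor filtration length is bounded above by the Loewy length, which Theorem~\ref{thm: T injective} pins at $\min(m,n)+1$), and then get the matching lower bound from the direct summand $\bJ_{m|n}$ via Corollary~\ref{fin-dim} and Proposition~\ref{thm:module structure}. Your approach has the advantage of not invoking the $ds_{ij}$ computation at all and of isolating a general lemma ($\soc^k\subseteq\tens^k$) that could be of independent use; the paper's approach is shorter given that the $ds_{ij}$ proposition has just been established and directly exhibits the filtration. Both are valid.
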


\begin{proof}  Obviously $\tens\bK_{m|n}\supset \bT_{m|n}$. Assume that  $\tens\bK_{m|n}\neq \bT_{m|n}$. Then since $\bT_{m|n}$ is injective in $\mathbb{T}_{\mathfrak g}$ the socle
of  $\tens\bK_{m|n}$ is larger than the socle of $\bT_{m|n}$, but this is a contradiction since $\soc \bT_{m|n}=\soc \bK_{m|n}$.
The second claim can be proven by induction on $\min(m,n)$, since $\bK_{m|n}/\bT_{m|n}$ is isomorphic to a submodule of $\bK^{\oplus mn}_{m-1|n-1}$ via the map $\oplus_{ij} ds_{ij}$.
\end{proof}

\subsection{Meaning of the socle filtration}
Now we will define a filtration on the category $\mathcal O_{m|n}^{\mathbb Z}$.
For a $\mathfrak {gl}(m|n)$-module $M$, let
$$X_M=\{x\in X_{\mathfrak{gl}(m|n)}\,|\,DS_x(M)\neq 0\},$$
and let $X^k_{\mathfrak{gl}(m|n)}$ be the subset of all elements in $X_{\mathfrak{gl}(m|n)}$ of rank less than or equal to $k$. We define
$[\mathcal O^{\mathbb Z}_{m|n}]^k$ to be the full subcategory of $\mathcal O^{\mathbb Z}_{m|n}$ consisting  of all modules $M$ such that
$X_M\subset X^k_{\mathfrak{gl}(m|n)}$. Note that $[\mathcal O^{\mathbb Z}_{m|n}]^k$ is not an abelian category. Furthermore,
we define $[\mathcal O^{\mathbb Z}_{m|n}]_{-}^k$ to be the full subcategory of $\mathcal O^{\mathbb Z}_{m|n}$ consisting of all modules $M$ such that
$$X_M\cap\mathfrak{gl}(m|n)_{-1}\subset X^k_{\mathfrak{gl}(m|n)}.$$

Let $\bK_{m|n}^k$ denote the complexification of the subgroup in $\bK_{m|n}$ generated by the classes of modules lying in $[\mathcal O^{\mathbb Z}_{m|n}]^k$,
and let $(\bK_{m|n}^k)_-$ be defined similarly for the category $[\mathcal O^{\mathbb Z}_{m|n}]_{-}^k$.
 Since both categories are invariant under the functors $\mathrm{E}_i$ and $\mathrm{F}_i$,  both $\bK_{m|n}^k$  and $(\bK_{m|n}^k)_-$  are
$\mathfrak{sl}(\infty)$-submodules of $\bK_{m|n}$.

\begin{conjecture} $\bK_{m|n}^k=\soc^{k+1}\bK_{m|n}$ and  $(\bK_{m|n}^k)_-=\tens^{k+1}\bK_{m|n}$.
\end{conjecture}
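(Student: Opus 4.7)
My plan is to attack the two statements in sequence, establishing the tensor-filtration identity first since Proposition \ref{thm:module structure} already provides a detailed model for the analogous statement on the direct summand $\bJ_{m|n}$.

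For the tensor filtration claim $(\bK_{m|n}^k)_-=\tens^{k+1}\bK_{m|n}$, I would proceed by induction on $k$. The base case $k=0$ asserts $(\bK_{m|n}^0)_-=\bT_{m|n}$: the containment $\bT_{m|n}\subseteq(\bK_{m|n}^0)_-$ holds because a Verma module $M(\lambda)$ is free over $U(\mathfrak{gl}(m|n)_{-1})$ and hence killed at the module level by every nonzero $x\in\mathfrak{gl}(m|n)_{-1}$ with $[x,x]=0$, while the reverse containment uses the already-established identity $\bigcap_{i,j}\ker ds_{ij}=\bT_{m|n}$ together with Lemma \ref{lem:fin-dim} to propagate vanishing from rank $1$ to all ranks. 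The inductive step would exploit the $\mathfrak{sl}(\infty)$-equivariant embedding $\bigoplus_{i,j} ds_{ij}:\bK_{m|n}/\bT_{m|n}\hookrightarrow\bK_{m-1|n-1}^{\oplus mn}$ to pull the level-$(k+1)$ statement on $\bK_{m|n}$ back from the level-$k$ statement on $\bK_{m-1|n-1}$, using the factorization $ds_x=ds_{x_k}\circ\cdots\circ ds_{x_1}$ to identify higher-rank DS on $\bK_{m|n}$ with iterated rank-one $ds_{ij}$.

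For the socle filtration claim $\bK_{m|n}^k=\soc^{k+1}\bK_{m|n}$, I would split into two inclusions. For $\soc^{k+1}\bK_{m|n}\subseteq \bK_{m|n}^k$: by Theorem \ref{thm:socinj}, the simple constituents $\bV^{\boldsymbol{\lambda}',\boldsymbol{\mu}'}$ of $\soc^{k+1}\bK_{m|n}$ satisfy $|\boldsymbol{\lambda}|-|\boldsymbol{\lambda}'|=|\boldsymbol{\mu}|-|\boldsymbol{\mu}'|\leq k$; using the decomposition of $\bK_{m|n}$ into $S_m\times S_n$-isotypic components from Theorem \ref{thm: T injective} one should be able to identify the ``co-size'' parameter with the atypicality of the corresponding $\mathfrak{gl}(m|n)$-classes. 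The reverse inclusion would follow by producing, for each simple $L(\lambda)$ of atypicality exactly $k+1$, an explicit projective $P$ in an atypicality-$(k+1)$ block whose pairing with $[L(\lambda)]$ descends to a nonzero functional on $\bK_{m|n}/\soc^{k+1}\bK_{m|n}$; this uses the $\fg_{q,r}$-invariance arguments from the proof of Theorem \ref{thm: T injective} together with Corollary \ref{cor:soc}.

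The main obstacle, in my view, is the matching between the combinatorial data of atypicality of a simple $\mathfrak{gl}(m|n)$-module $L(\lambda)$ and the depth at which the class $l_\lambda=[L(\lambda)]$ appears in the socle filtration of $\bK_{m|n}$. Theorem \ref{thm:socinj} describes socle layers in terms of $\bV^{\boldsymbol{\lambda}',\boldsymbol{\mu}'}$-multiplicities, whereas $l_\lambda$ expands into these simple classes via (inverse) Kazhdan--Lusztig coefficients whose combinatorics encodes precisely the atypicality data; making this translation precise, rather than relying on existence arguments, appears to require genuinely new input beyond what is assembled in the present paper, and this is presumably why the statement is posed as a conjecture rather than a theorem.
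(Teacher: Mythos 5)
The statement you have attacked is a \emph{conjecture}, not a theorem: the paper explicitly labels it as such, and immediately afterwards writes ``Here we prove a weaker statement'' and then establishes only the single inclusion $\soc^{k+1}\bK_{m|n}\subset\bK_{m|n}^k$ (using the observation that weights of $\bV^{\boldsymbol{\lambda},\boldsymbol{\mu}}$ have atypicality at least $m-|\boldsymbol{\lambda}|$, together with Theorem~\ref{thm:socinj} and the result of \cite{CS} that $(\mathcal O^{\mathbb Z}_{m|n})_\chi\subset [\mathcal O^{\mathbb Z}_{m|n}]^k$ when $\chi$ has atypicality $\leq k$). So there is no proof in the paper to compare your attempt against, and you are correct to conclude at the end of your note that closing the gap requires input not assembled in the paper. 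Your sketch is an honest outline of the route one would naturally try, and your identification of the main obstacle (translating the socle depth of the classes $l_\lambda$ into atypicality via inverse Kazhdan--Lusztig data) matches what the paper leaves open.

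One concrete point worth flagging: your computation of the base case of the tensor-filtration part, if carried out as you describe, yields $(\bK_{m|n}^0)_-=\bT_{m|n}=\tens^0\bK_{m|n}$. Indeed, Verma modules are $U(\mathfrak{gl}(m|n)_{-1})$-free, hence $DS_x(M(\lambda))=0$ for every nonzero $x\in\mathfrak{gl}(m|n)_{-1}$, giving $\bT_{m|n}\subseteq(\bK_{m|n}^0)_-$; conversely any $M$ in $[\mathcal O^{\mathbb Z}_{m|n}]_-^0$ satisfies $ds_{ij}[M]=0$ for all $i,j$, so $[M]\in\bigcap_{i,j}\ker ds_{ij}=\bT_{m|n}$. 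But the conjecture as printed asserts $(\bK_{m|n}^0)_-=\tens^1\bK_{m|n}$, which strictly contains $\tens^0\bK_{m|n}=\bT_{m|n}$ whenever $m,n\geq 1$, since the tensor filtration of $\bK_{m|n}$ has length $\min(m,n)+1$ and the socle of $\bK_{m|n}/\bT_{m|n}$ is a nonzero semisimple (hence tensor) module. So either your base case contains an error I am not seeing, or the conjecture's index on $\tens$ should be $k$ rather than $k+1$; you should resolve this before going further, since it affects the shape of the inductive step you propose.
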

Here we prove a weaker statement.
Recall that $\mathcal O_{m|n}^{\mathbb Z}$ has block decomposition:
$$\mathcal O_{m|n}^{\mathbb Z}=\bigoplus(\mathcal O_{m|n}^{\mathbb Z})_{\chi},$$
where $(\mathcal O_{m|n}^{\mathbb Z})_{\chi}$ is the subcategory of modules admitting generalized central character $\chi$. The complexified reduced
Grothendieck group of $(\mathcal O_{m|n}^{\mathbb Z})_{\chi}$ coincides with the weight subspace $(\bK_{m|n})_{\chi}$. The degree of atypicality of $\chi$
is defined in \cite{DS}. In \cite{CS} it is proven that $(\mathcal O_{m|n}^{\mathbb Z})_{\chi}\subset [\mathcal O^{\mathbb Z}_{m|n}]^k$ if the degree of atypicality
of $\chi$ is not greater than $k$. Note that the degree of atypicality of the highest weight $\chi$ of the irreducible $\mathfrak{sl}_{\infty}$-module $\bV^{{\boldsymbol{\lambda},{\boldsymbol{\mu}}}}$
is equal to $m-|{\boldsymbol{\lambda}}|=n-|{\boldsymbol{\mu}}|$ and the degree of atypicality of any weight of $\bV^{{\boldsymbol{\lambda},{\boldsymbol{\mu}}}}$ is not
less than the degree of atypicality of the highest weight. Combining this observation with the description of the socle filtration of $\bK_{m|n}$ we obtain
the following.
  \begin{prop} $\soc^{k+1}\bK_{m|n}$ is the submodule in $\bK_{m|n}$ generated by weight vectors of weights with degree of atypicality less or
    equal to $k$. Therefore we have $\soc^{k+1}\bK_{m|n}\subset \bK_{m|n}^k$.
    \end{prop}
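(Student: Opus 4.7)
The plan is to combine Theorem~\ref{thm: T injective}, which gives an explicit description of the socle filtration of $\bK_{m|n}$, with the two facts about atypicality of weights stated immediately before the proposition. Let $N$ denote the $\mathfrak{sl}(\infty)$-submodule of $\bK_{m|n}$ generated by all weight vectors whose weights have degree of atypicality at most $k$; the main task is to identify $N$ with $\soc^{k+1}\bK_{m|n}$.

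First I would use Theorem~\ref{thm: T injective} to describe each layer $\overline{\soc}^{j}\bK_{m|n}$ as a direct sum of simples $\bV^{\boldsymbol{\lambda}',\boldsymbol{\mu}'}$ with $|\boldsymbol{\lambda}'|=m-j$ and $|\boldsymbol{\mu}'|=n-j$. By the formula recalled above, the highest weight of each such constituent has degree of atypicality exactly $j$, and by the accompanying observation every weight of $\bV^{\boldsymbol{\lambda}',\boldsymbol{\mu}'}$ has atypicality $\ge j$. Hence every weight space occupied by $\overline{\soc}^{j}\bK_{m|n}$ has atypicality $\ge j$.

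From this I would deduce $N\subset\soc^{k+1}\bK_{m|n}$: the quotient $\bK_{m|n}/\soc^{k+1}\bK_{m|n}$ has its Jordan--H\"older constituents in layers $\overline{\soc}^{j}$ with $j\ge k+2$, so none of its weights have atypicality $\le k+1$; in particular every weight vector of atypicality $\le k$ in $\bK_{m|n}$ maps to zero in the quotient, and hence lies in $\soc^{k+1}\bK_{m|n}$. For the reverse inclusion $\soc^{k+1}\bK_{m|n}\subset N$, I would exhibit, for each simple constituent of the top layer $\overline{\soc}^{k+1}$, a weight vector in $N$ of atypicality $\le k$ (sitting in a lower socle layer of $\bK_{m|n}$) which, under an appropriate chain of Chevalley generators $e_i,f_i$ of $\mathfrak{sl}(\infty)$, produces a lift of a highest weight vector of that constituent. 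Here the key point is that $e_i$ and $f_i$ can transition between blocks of differing atypicality, so the $\mathfrak{sl}(\infty)$-orbit of a suitable atypicality-$\le k$ weight vector meets the top layer. This step is the main obstacle, since the top layer by itself consists entirely of simples all of whose weights have atypicality at least $k+1$, so the necessary generators must come from strictly lower socle layers and reach the top layer only through the action of $\mathfrak{sl}(\infty)$.

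Finally, the concluding containment $\soc^{k+1}\bK_{m|n}\subset\bK^{k}_{m|n}$ will follow by combining the identification $\soc^{k+1}\bK_{m|n}=N$ with the result of \cite{CS} recalled above: any block $(\mathcal O^{\mathbb Z}_{m|n})_{\chi}$ whose central character has atypicality at most $k$ lies in $[\mathcal O^{\mathbb Z}_{m|n}]^{k}$. Therefore the weight spaces of $\bK_{m|n}$ corresponding to such blocks all lie in $\bK^{k}_{m|n}$. Since $\bK^{k}_{m|n}$ is already an $\mathfrak{sl}(\infty)$-submodule of $\bK_{m|n}$, it contains the submodule $N$ generated by these weight spaces, and hence contains $\soc^{k+1}\bK_{m|n}$.
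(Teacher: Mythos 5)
Your argument for $N\subseteq\soc^{k+1}\bK_{m|n}$ (where $N$ is the submodule generated by the weight vectors of atypicality $\le k$) is fine, and your handling of the final containment $\soc^{k+1}\bK_{m|n}\subseteq\bK^k_{m|n}$ via \cite{CS} is also correct. But your plan for the reverse inclusion $\soc^{k+1}\bK_{m|n}\subseteq N$ cannot succeed, and the obstacle is more fundamental than you acknowledge: the socle filtration is a filtration by $\mathfrak{sl}(\infty)$-\emph{submodules}. In particular $\soc^{k}\bK_{m|n}$ is $\mathfrak{sl}(\infty)$-stable, so ``chains of Chevalley generators'' applied to vectors in lower socle layers can never ``reach the top layer'' $\overline{\soc}^{k+1}$. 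Worse, the very same reasoning you use for the forward inclusion shows the stronger fact that every weight vector of atypicality $\le k$ already lies in $\soc^{k}\bK_{m|n}$ (the quotient $\bK_{m|n}/\soc^{k}\bK_{m|n}$ has all its weights of atypicality $\ge k+1$), so $N\subseteq\soc^{k}\bK_{m|n}\subsetneq\soc^{k+1}\bK_{m|n}$, and your intended equality cannot hold. With the paper's convention $\soc^0\bM=\soc\bM$, the identity one can actually prove is $N=\soc^{k}\bK_{m|n}$; the displayed index $k+1$ appears to reflect a 1-based socle convention rather than the 0-based one the paper defines.

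The correct route for the reverse inclusion is not to climb across socle layers via the Lie algebra action, but to work layer by layer inside $\soc^k\bK_{m|n}$. For each $j\le k$, the semisimple layer $\overline{\soc}^j\bK_{m|n}$ is a direct sum of simples $\bV^{\boldsymbol{\lambda}',\boldsymbol{\mu}'}$ with $|\boldsymbol{\lambda}'|=m-j$, and by the paper's observation the highest weight of each such simple has atypicality \emph{exactly} $j\le k$. Lift each highest weight vector, within its weight space, to a weight vector $w\in\soc^j\bK_{m|n}$ of atypicality $j$; then $w\in N$ by definition, and $U(\mathfrak{sl}(\infty))w\subseteq N$ surjects onto the corresponding simple summand of $\overline{\soc}^j\bK_{m|n}$. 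An induction on $j$ (using $\soc^{j-1}\bK_{m|n}\subseteq N$) then gives $\soc^{j}\bK_{m|n}\subseteq N$ for all $j\le k$. This is the step your outline is missing, and it replaces the unfounded claim that the $\mathfrak{sl}(\infty)$-orbit of a low-atypicality vector ``meets the top layer.''
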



\section{Appendix}
In this section, we prove the technical lemma used in Lemma~\ref{lemma for thm}, which in turn is needed for the proof of Theorem~\ref{thm: T injective}.

Consider decompositions
$\bV=\bbW_1\oplus\bbW_2$ and $(\bV)_*=(\bbW_1)_*\oplus(\bbW_2)_*$
such that $\bbW_1^{\perp}=(\bbW_2)_*$ and $\bbW_2^{\perp}=(\bbW_1)_*$.
Denote by $\fs$
the subalgebra $\mathfrak{sl}(\bbW_1)$
of $\fg$. Let $\bT_{m|n}=\bV^{\otimes m}\otimes\bV_*^{\otimes n}$, and let
$\bY_{m|n}$  be the intersection with
$\bT_{m|n}$ of the ideal generated by
$\bbW_1\oplus(\bbW_1)_*$ in the tensor algebra $T(\bV\oplus\bV_*)$. Then $\bT_{m|n}$ considered as an $\fs$-module admits the decomposition
$$\Res_{\fs}\bT_{m|n}=(\bbW_2^{\otimes m}\otimes(\bbW_2)_*^{\otimes  n})\oplus \bY_{m|n}.$$

\begin{lem}\label{lem:tech} We have
  $$(\soc \bT_{m|n})\cap \bY_{m|n}\subset\fs \bY_{m|n}.$$
\end{lem}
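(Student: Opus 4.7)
The strategy is to exploit the $\fs$-equivariant decomposition of $\bT_{m|n}$ by subsets of positions. For each $S \subset \{1,\dots,m+n\}$, let $\bT_S$ denote the subspace of tensors whose factor at position $p$ lies in $\bbW_1$ or $(\bbW_1)_*$ when $p \in S$, and in $\bbW_2$ or $(\bbW_2)_*$ when $p \notin S$; then $\bT_{m|n} = \bigoplus_S \bT_S$, and since $\fs = \mathfrak{sl}(\bbW_1)$ preserves $\bbW_1$ and annihilates $\bbW_2$, each $\bT_S$ is $\fs$-stable. In this decomposition $\bY_{m|n} = \bigoplus_{S \neq \emptyset} \bT_S$ and $\fs\bY_{m|n} = \bigoplus_{S \neq \emptyset} \fs\bT_S$. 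Writing $t = \sum_S t_S$ for a given $t \in (\soc \bT_{m|n}) \cap \bY_{m|n}$, we have $t_\emptyset = 0$, and it suffices to show $t_S \in \fs\bT_S$ for every nonempty $S$.

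Next I would identify $\fs\bT_S$ explicitly. Setting $S_1 = S \cap [1,m]$, $S_2 = S \cap [m+1,m+n]$ and factoring $\bT_S = \bT_S^1 \otimes \bT_S^2$ according to positions in and outside $S$, the subalgebra $\fs$ acts trivially on $\bT_S^2$ and makes $\bT_S^1 \cong \bbW_1^{\otimes|S_1|} \otimes (\bbW_1)_*^{\otimes|S_2|}$ an $\mathfrak{sl}(\bbW_1)$-tensor module. The Schur--Weyl theory used in Section~\ref{sl inf} gives $\Hom_\fs(\bT_S^1, \mathbb C) = 0$ when $|S_1| \neq |S_2|$, and for $|S_1| = |S_2| = k$ this Hom-space is spanned by the $k!$ linearly independent complete contractions $\Phi_\beta^S : \bT_S^1 \to \mathbb C$ indexed by bijections $\beta : S_1 \to S_2$. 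Since $\bT_S^1 / \fs\bT_S^1$ is the largest $\fs$-trivial quotient of $\bT_S^1$, a dimension count yields $\fs\bT_S^1 = \bigcap_\beta \ker \Phi_\beta^S$, and hence $\fs\bT_S = \bigcap_\beta \ker(\Phi_\beta^S \otimes \mathrm{id}_{\bT_S^2})$. In particular $\fs\bT_S = \bT_S$ whenever $|S_1| \neq |S_2|$, which covers the base case $|S| = 1$ of the induction below.

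For the inductive step on $|S|$, assume $|S_1| = |S_2| = k \geq 1$ and fix a bijection $\beta : S_1 \to S_2$. Apply the $k$-fold contraction $\Phi_\beta = \Phi_{i_1, \beta(i_1)} \circ \cdots \circ \Phi_{i_k, \beta(i_k)}$ to $t$; since each single contraction annihilates $t$, we have $\Phi_\beta t = 0$. The perpendicularity conditions $\bbW_1^\perp = (\bbW_2)_*$ and $\bbW_2^\perp = (\bbW_1)_*$ force $\Phi_{i, \beta(i)}$ to vanish on $\bT_{S'}$ unless both $i$ and $\beta(i)$ lie in $S'$ or both lie in its complement; projecting the identity $\Phi_\beta t = 0$ onto the component of $\bT_{m-k|n-k}$ corresponding to tensors supported entirely in $\bbW_2$ on the surviving positions yields
\[
\sum_{B} \Phi_\beta t_B = 0,
\]
where the sum runs over subsets $B \subset S$ that are unions of $\beta$-pairs. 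The term $B = \emptyset$ vanishes since $t_\emptyset = 0$. For each intermediate $B$ with $\emptyset \subsetneq B \subsetneq S$, the inductive hypothesis gives $t_B \in \fs\bT_B$; since $B$ is a union of $\beta$-pairs, $\beta$ restricts to a bijection $B_1 \to B_2$, and $\Phi_\beta|_{\bT_B}$ factors through the complete contraction $\Phi_{\beta|_{B_1}}^B$ applied to the $\bT_B^1$-factor, which already annihilates $t_B$. Only the $B = S$ term survives, giving $\Phi_\beta^S t_S = \Phi_\beta t_S = 0$; since $\beta$ was arbitrary, $t_S \in \fs\bT_S$.

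The main obstacle lies in the combinatorial bookkeeping of the last step: precisely tracking which $\bT_{S'}$-components of $t$ contribute to a given component of $\Phi_\beta t$, and checking that all intermediate terms are killed by the inductive hypothesis rather than produce unwanted relations. Once the block-diagonal structure of the pairing $\langle \cdot, \cdot \rangle$ is exploited, however, the combinatorics organize themselves cleanly around the matching $\beta$.
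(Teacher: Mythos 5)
Your proof is correct and follows essentially the same route as the paper's: decompose $\bT_{m|n}$ into $\fs$-stable blocks indexed by the set of positions lying in $\bbW_1$ or $(\bbW_1)_*$ (the paper's $\bT_{m|n}^\tau$ with $S=\tau^{-1}(1)$), identify $\fs\bT_S$ as the common kernel of the internal $\bbW_1$-contractions, and induct on $|S|$ by projecting the complete contraction $\Phi_\beta t=0$ onto the all-$\bbW_2$ component and killing the intermediate terms via the inductive hypothesis. The only differences are cosmetic: you index by subsets rather than by the functions $\tau$, and you take $|S|=1$ (where $|S_1|\neq|S_2|$ makes the claim vacuous) as the base case where the paper explicitly treats $p(\tau)=q(\tau)=1$.
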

\begin{proof} Note that $\bY_{m|n}$ is an object of $\widetilde{\mathbb  T}_{\fs}$ and
\begin{equation}\label{coinv}
\fs\bY_{m|n}=\bigcap_{\varphi\in\mathrm{Hom}_{\fs}(\bY_{m|n},\mathbb{C})} \ker\ \varphi.
\end{equation}
Let $\tau$ denote a map from $\{1,\dots,m+n\}$ to $\{1,2\}$. Denote by
$\bT_{m|n}^{\tau}$ the subspace of $\bT_{m|n}$ spanned by
$v_1\otimes\dots\otimes v_m\otimes u_{m+1}\otimes\dots\otimes u_{m+n}$
with $v_i\in \bbW_{\tau(i)}$ and $u_j\in (\bbW_{\tau(j)})_*$. Clearly,
$$\Res_{\fs}\bT_{m|n}=\bigoplus_{\tau}\bT^{\tau}_{m|n},$$
and we have an $\fs$-module isomorphism
$$\bT_{m|n}^{\tau}\cong \bbW_1^{\otimes p(\tau)}\otimes \bbW_2^{\otimes  (m-p(\tau))}\otimes (\bbW_1)_*^{\otimes q(\tau)}\otimes (\bbW_2)_*^{\otimes (n-q(\tau))},$$
where
$$p(\tau):=|\tau^{-1}(1)\cap\{1,\dots,m\}|, \quad q(\tau):=|\tau^{-1}(1)\cap\{m+1,\dots,m+n\}|.$$
Furthermore,
$$\bY_{m|n}=\bigoplus_{p(\tau)+q(\tau)>0}\bT^{\tau}_{m|n}.$$

Recall from \cite[Theorem 2.1]{PStyr} that
$$\soc \bT_{m|n}=\bigcap_{1\leq i\leq m, m<j\leq m+n} \ker\Phi_{ij},$$
where $\Phi_{ij}$ is defined in (\ref{contractions}). For $r=1,2$,  let
$\Phi^{\bbW_r}_{ij}:\bT_{m|n}\to \bT_{m-1|n-1}$ be defined by
$$ v_1\otimes\cdots\otimes v_m\otimes u_{m+1}\otimes\cdots\otimes
u_{m+n} \mapsto \langle u_j , v_i \rangle^{\bbW_r}
v_1\otimes\cdots\otimes \widehat{v_i} \otimes\cdots\otimes v_m\otimes u_{m+1}\otimes\cdots\otimes \widehat{u_j} \otimes\cdots\otimes u_{m+n},
$$
where $ \langle \cdot , \cdot \rangle^{\bbW_r}$ is defined on homogeneous elements by
$$ \langle u_j , v_i \rangle^{\bbW_r}:=\begin{cases} \langle u_j, v_i \rangle\,\,\text{if}\,\, u_j,v_i\in \bbW_r\\
0 \,\,\text{otherwise.}\end{cases}$$
Next, recall from \cite{DPS} that $\Hom_{\fs}(\bbW_1^{\otimes p}\otimes(\bbW_1)_*^{\otimes  q},\mathbb C)=0$ if $p\neq q$, and
if $p=q$, is spanned by compositions of contractions
$\Phi^{\bbW_1}_{1,j_1}\dots \Phi^{\bbW_1}_{p,j_p}$ for all possible
permutations $j_1,\dots,j_p$. Using (\ref{coinv}) we can conclude that
$\fs \bY_{m|n}^{\tau}=\bY_{m|n}^{\tau}$ if $ p(\tau)\neq q(\tau)$, whereas if
  $p=p(\tau)=q(\tau)$ we have
  $$\fs\bY_{m|n}^{\tau}=\bigcap_{i_1,\dots,i_p,j_1,\dots,j_p\in\tau^{-1}(1)}\ker\Phi^{\bbW_1}_{i_1,j_1}\dots \Phi^{\bbW_1}_{i_p,j_p}.$$
  Observe that
\begin{equation}\label{sum}
  \Phi_{ij}=\Phi^{\bbW_1}_{ij}+\Phi^{\bbW_2}_{ij}.
  \end{equation}

  We claim that if $y=\sum_{\tau}y_{\tau}\in \bY_{m|n}$ and $\Phi_{ij}(y)=0$
  for all $i,j$, then $y_{\tau}\in\fs \bT_{m|n}^{\tau}$ for all $\tau$. The
  statement is trivial for every $\tau$ such that $p(\tau)\neq q(\tau)$. Now we proceed to prove the claim in the case
  $p(\tau)=q(\tau)=p$ by induction on $p$.

Let $p=1$ and consider $\tau'$ with $p(\tau')=1=q(\tau')$. Let $i\leq m$ and $j>m$ be
such that $\tau'(i)=\tau'(j)=1$. Note that $\Phi_{i,j}(y_\tau')\in
(\bbW_2^{\otimes m-1}\otimes(\bbW_2)_*^{\otimes  n-1})$ and for
$\tau\neq\tau'$ we have $\Phi_{i,j}(y_{\tau})\in Y_{m-1|n-1}$. Therefore,
$\Phi_{i,j}(y_{\tau'})=\Phi_{i,j}^{\bbW_1}(y_{\tau'})=0$ and hence
$y_{\tau'}\in \fs \bT^{\tau'}_{m|n}$.

Now consider $y_{\tau'}$ such that $p(\tau')=p=q(\tau')$. Let  $i_1,\dots,i_p\leq m$ and $ j_1,\dots j_p>m$ such that $\tau'(i)=\tau'(j)=1$.
We would like to show that
\begin{equation}\label{eqn:tau} \Phi^{\bbW_1}_{i_1,j_1}\dots \Phi^{\bbW_1}_{i_p,j_p}(y_{\tau'})=\Phi_{i_1,j_1}\dots \Phi_{i_p,j_p}(y_{\tau'})=0.\end{equation}
Note that $\tau'$ has the property
\begin{equation}\label{property}\Phi_{i_1,j_1}\dots \Phi_{i_p,j_p}(y_{\tau'})\ \in\ \bbW_2^{\otimes m-p}\otimes(\bbW_2)_*^{\otimes  n-p}.\end{equation} Suppose that
$\tau''$ also has property (\ref{property}). Then
$(\tau'')^{-1}(1)\subset (\tau')^{-1}(1)$, and
if $\Phi_{i_1,j_1}\dots \Phi_{i_p,j_p}(y_{\tau''})\neq~0$, then
$\tau''(i_r)=\tau''(j_r)$ for all $r=1,\dots,p$. For every such
$\tau''\neq\tau'$ we have $p(\tau'')=q(\tau''):=l<p$. Let $\{i_{r_1},\dots,
i_{r_l},j_{r_1},\dots, j_{r_l}\}=(\tau'')^{-1}(1)$. Then by induction
assumption  $y_{\tau''}\in\fs \bT_{m|n}^{\tau''}$ and hence $$\Phi^{\bbW_1}_{i_{r_1},j_{r_1}}\dots \Phi^{\bbW_1}_{i_{r_l},j_{r_l}}(y_{\tau''})=\Phi_{i_{r_1},j_{r_1}}\dots \Phi_{i_{r_l},j_{r_l}}(y_{\tau''})=0.$$
But then
$$\Phi_{i_1,j_1}\dots \Phi_{i_p,j_p}(y_{\tau''})=0,$$
which implies
$$\Phi_{i_1,j_1}\dots \Phi_{i_p,j_p}(y_{\tau'})=0.$$
Now (\ref{eqn:tau}) follows, and this implies $y_{\tau'}\in\fs \bT_{m|n}^{\tau}$.
\end{proof}

\noindent Crystal Hoyt\\
Department of Mathematics, ORT Braude College \& Weizmann Institute, Israel\\
e-mail: crystal@braude.ac.il\\

\noindent Ivan Penkov\\
Jacobs University Bremen, Campus Ring 1, 28759, Bremen, Germany\\
e-mail: i.penkov@jacobs-university.de\\

\noindent Vera Serganova\\
Department of Mathematics, University of California Berkeley, Berkeley CA 94720, USA\\
e-mail: serganov@math.berkeley.edu\\

\end{document}